\def\bC{\mathbb{C}}
\def\bH{\mathbb{H}}
\def\cH{\mathcal{H}}
\def\cM{\mathcal{M}}
\def\bP{\mathbb{P}}
\def\bQ{\mathbb{Q}}
\def\bZ{\mathbb{Z}}
\def\barM{\overline{\cM}}
\def\barH{\overline{\cH}}
\def\wt{\widetilde}
\DeclareMathOperator{\res}{res}
\DeclareMathOperator{\SL}{SL}
\DeclareMathOperator{\Spec}{Spec}
\newtheorem{thm}{Theorem}[section]
\newtheorem{lem}[thm]{Lemma}
\newtheorem{cor}[thm]{Corollary}
\newtheorem{prop}[thm]{Proposition}
\newtheorem{conjecture}{Conjecture}
\theoremstyle{definition}
\newtheorem{rem}[thm]{Remark}
\newtheorem{defn}[thm]{Definition}
\g@addto@macro\bfseries{\boldmath} 
\begin{document}

\title{Non-tautological Hurwitz cycles}

\author{Carl Lian}
\address{Institut f\"{u}r Mathematik, Humboldt-Universit\"{a}t zu Berlin, 12489 Berlin, Germany}
\email{liancarl@hu-berlin.de}
\urladdr{\url{https://sites.google.com/view/carllian}}

\date{\today}

\begin{abstract}
We show that various loci of stable curves of sufficiently large genus admitting degree $d$ covers of positive genus curves define non-tautological algebraic cycles on $\barM_{g,N}$, assuming the non-vanishing of the $d$-th Fourier coefficient of a certain modular form. Our results build on those of Graber-Pandharipande and van Zelm for degree 2 covers of elliptic curves; the main new ingredient is a method to intersect the cycles in question with boundary strata, as developed recently by Schmitt-van Zelm and the author.
\end{abstract}

\maketitle


\section{Introduction}\label{intro}

\subsection{Tautological classes on moduli spaces of curves}

The Chow $A^{*}(\barM_{g,n})$ and cohomology $H^{*}(\barM_{g.n})$ rings of moduli spaces of stable pointed curves are central objects of enumerative geometry. While both objects are extremely complicated and likely impossible to understand completely, Mumford \cite{mumford} initiated a study of certain \textit{tautological classes} on $\barM_{g,n}$ that appear in many natural geometric situations and are largely computable in practice.

By definition, the tautological rings $R^{*}(\barM_{g,n})\subset A^{*}(\barM_{g,n})$ form the smallest system of subrings containing the $\psi$ and $\kappa$ classes and closed under all pushforwards by forgetful morphisms $\pi:\barM_{g,n+1}\to\barM_{g,n}$ and boundary morphisms $\xi_{\Gamma}:\barM_{\Gamma}\to\barM_{g,n}$. Moreover, additive generators for the tautological ring and formulas for their intersections may be given combinatorially, see \cite[Appendix A]{gp}. A conjecturally complete set of relations is given by Pixton's relations, see \cite{ppz}.

Many cohomology classes on moduli spaces of curves arising in geometry turn out to be tautological. For example, using techniques of Gromov-Witten theory, Faber-Pandharipande \cite{fp} show that loci of curves admitting maps to $\bP^1$ with prescribed ramification profiles are tautological.

We review the theory of tautological classes in \S\ref{taut_review}.

\subsection{Non-tautological classes from Hurwitz cycles}

In contrast to the result of \cite{fp}, it was first shown by Graber-Pandharipande \cite{gp} that certain loci of curves admitting double covers of positive genus curves are non-tautological. For example:

\begin{thm}\cite[Theorem 2]{gp}
The locus of pointed curves $[X,x_1,\ldots,x_{20}]\in \barM_{2,20}$ such that there exists a 2-to-1 cover $f:X\to E$, where $E$ is a genus 1 curve, and $f(x_{2i-1})=f(x_{2i})$ for $i=1,\ldots,10$, is non-tautological.
\end{thm}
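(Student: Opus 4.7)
The plan is to express $[H]$ as a pushforward from an admissible covers space and then detect its non-tautological character by restricting to a carefully chosen boundary stratum of $\barM_{2,20}$, reducing to a non-tautological class on a lower-dimensional moduli space of elliptic curves.

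First, I would construct the Hurwitz space $\barH$ parametrizing tuples $(f : X \to E; p_{1}, \ldots, p_{20})$ where $f$ is a degree-$2$ admissible cover from a genus-$2$ nodal curve $X$ onto a genus-$1$ nodal curve $E$ with $f(p_{2i-1}) = f(p_{2i})$ for each $i$. The forgetful map $\phi : \barH \to \barM_{2,20}$ is proper and generically finite onto $H$, so $[H]$ agrees with $\phi_{*}[\barH]$ up to a positive rational factor, and it suffices to show the pushforward is non-tautological.

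Next, I would invoke the principle that tautological classes pull back to tautological classes along boundary gluing morphisms $\xi_{\Gamma} : \barM_{\Gamma} \to \barM_{2,20}$, and push forward to tautological classes under the projection of $\barM_{\Gamma}$ onto any of its vertex factors $\barM_{g_{v}, n_{v}}$. It is therefore enough to find a stable graph $\Gamma$ and a factor $\barM_{g_{v}, n_{v}}$ such that the image of $\xi_{\Gamma}^{*}\phi_{*}[\barH]$ under this projection is non-tautological. I would choose $\Gamma$ corresponding to a boundary divisor where $X$ degenerates into a nodal union of two elliptic components with the 20 marked points distributed compatibly with the pairing structure. The admissible covers of such a curve can be classified combinatorially, and projecting to an appropriate $\barM_{1,N}$ factor produces a Hurwitz cycle parametrizing pointed elliptic curves $(E; q_{1}, \ldots, q_{N})$ obeying a bisection relation inherited from the original degree-$2$ cover.

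The non-tautologicality of this reduced class is detected by a Hodge-theoretic pairing: the tautological ring of $\barM_{1,N}$ pairs trivially with a cohomology class attached by Eichler-Shimura to the weight-$12$ cusp form $\Delta$, while the Hurwitz cycle produces a nonzero pairing, essentially because the second Fourier coefficient $\tau(2)$ of $\Delta$ is nonzero; this is the degree-two instance of the general modular-form hypothesis appearing throughout this paper. The main obstacle is the boundary restriction computation itself: one must carry out an admissible-covers degeneration analysis over $\barM_{\Gamma}$ precise enough to isolate the summand of $\xi_{\Gamma}^{*}\phi_{*}[\barH]$ that projects to this Hurwitz cycle, separating it from the tautological noise produced by the other degeneration types appearing in the fiber of $\phi$. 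This is precisely the boundary intersection bookkeeping that the Schmitt-van Zelm / author method is designed to systematize and which motivates the extensions carried out in the rest of the paper.
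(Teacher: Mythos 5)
Your overall skeleton matches the Graber--Pandharipande argument that the paper sketches in \S1.2 and generalizes in \S5.1: realize the locus as $\phi_{*}[\barH]$, pull back to the boundary divisor $\xi:\barM_{1,11}\times\barM_{1,11}\to\barM_{2,20}$ (two elliptic components glued at one node, with one point of each conjugate pair on each side), classify the admissible covers supported there, and bring in the weight-$12$ cusp form. But your detection step has a genuine gap. You propose to project $\xi^{*}\phi_{*}[\barH]$ to a single vertex factor $\barM_{1,N}$ and exhibit there a non-tautological algebraic class that pairs non-trivially with the Eichler--Shimura class of $\eta(q)^{24}$. No such class can exist: by Petersen's theorem (quoted in the paper as the statement $RH^{*}(\barM_{1,11})=H^{2*}(\barM_{1,11})$), every algebraic class on $\barM_{1,11}$ is tautological, and any algebraic cycle class, being of Hodge type $(p,p)$, pairs trivially with the odd class $\omega\in H^{11,0}(\barM_{1,11})$. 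Concretely, the only non-tautological contribution to $\xi^{*}\phi_{*}[\barH]$ is a non-zero multiple of the diagonal $[\Delta]\in H^{22}(\barM_{1,11}\times\barM_{1,11})$ (the two $11$-pointed elliptic components must each map isomorphically to $E$ and hence be isomorphic to each other as pointed curves), and pushing $[\Delta]$ forward to either factor gives a multiple of the fundamental class, which is tautological. So the ``project to one factor'' criterion, while valid in principle, detects nothing here; it is the mechanism used in the paper's \emph{inductive} steps (where the inductive hypothesis supplies a non-tautological class on a smaller space), not in this base case.

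The correct detection must stay on the product: the K\"unneth decomposition of $[\Delta]$ contains the terms $\omega\otimes\overline{\omega}+\overline{\omega}\otimes\omega$ with non-zero coefficient (since $\int\omega\wedge\overline{\omega}\neq0$), whereas a tautological K\"unneth decomposition can involve only even-degree classes on the factors; after checking that every other stratum in the fiber product contributes a class with TKD (targets of genus $0$, distinct target components, or boundary-supported images), Proposition 1 of Graber--Pandharipande yields the contradiction. A second, smaller inaccuracy: for a degree-$2$ cover the two elliptic components map to $E$ with degrees $d_1+d_1'=2$, i.e.\ both are isomorphisms, so the relevant quantity is $\tau(1)^{2}=a_2=1$, the leading coefficient of $\eta(q)^{48}$, not $\tau(2)$; no non-vanishing hypothesis on Fourier coefficients is actually needed in this case.
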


More recently, this result was extended by van Zelm:

\begin{thm}\cite[Theorem 1]{vanzelm}
Suppose that $g\ge2$ and $g+m\ge12$. Then, the locus of pointed curves on $\barM_{g,2m}$ admitting a double cover of an elliptic curve with $m$ pairs of conjugate points, is non-tautological.
\end{thm}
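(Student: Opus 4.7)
The plan is to reduce the general case to the Graber--Pandharipande base case $(g, m) = (2, 10)$ of the preceding theorem by manufacturing, under the assumption that $H_{g,m}$ is tautological, a tautological class on $\barM_{2, 20}$ that contains $[H_{2, 10}]$ as a non-zero component. Write $H_{g,m} \subset \barM_{g, 2m}$ for the Hurwitz cycle in the statement, and assume for contradiction that $[H_{g,m}]$ is tautological for some $(g, m) \neq (2, 10)$ with $g \geq 2$ and $g + m \geq 12$.

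The key reduction is by boundary degeneration. When $g \geq 3$, I would use the gluing morphism
$$\xi_\Gamma : \barM_{2, 21} \times \barM_{g-2, 2(m-10)+1} \to \barM_{g, 2m}$$
associated to the two-vertex dual graph $\Gamma$ of genera $2$ and $g-2$ joined by a single edge, with $10$ of the $m$ conjugate pairs distributed on the genus $2$ vertex and the remaining $m-10$ on the other vertex. When $g = 2$ and $m \geq 11$, I would use the parallel degeneration in which a rational tail carrying extra conjugate pairs sprouts off the genus $2$ component, with the cover acquiring a corresponding rational bridge on the elliptic target. In both cases, since tautological classes pull back to tautological classes along boundary gluing maps, $\xi_\Gamma^*[H_{g,m}]$ is tautological on the product.

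The heart of the argument is the explicit computation of $\xi_\Gamma^*[H_{g,m}]$ via admissible covers. A stable curve in the image of $\xi_\Gamma$ lies in $H_{g,m}$ precisely when it extends to an admissible double cover of a nodal elliptic target whose dual graph is compatible with $\Gamma$. Enumerating the resulting admissible cover types using the Schmitt--van Zelm boundary calculus advertised in the introduction, one expects a decomposition
$$\xi_\Gamma^*[H_{g,m}] = c \cdot (\pi^*[H_{2, 10}]) \times \alpha + (\text{other tautological terms}),$$
where $\pi : \barM_{2, 21} \to \barM_{2, 20}$ forgets the node marking, $\alpha$ is a non-zero tautological class on the second factor, and $c > 0$. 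Pushing forward to $\barM_{2, 20}$ via $\pi_*$ followed by integration over the second factor---operations preserving the tautological ring---produces a non-zero tautological multiple of $[H_{2, 10}]$, contradicting the Graber--Pandharipande theorem.

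The main obstacle is the identification of the admissible cover types on the boundary and the verification that the leading coefficient $c$ survives the final pushforward without cancelling against the other tautological summands. One must carefully track how branch points distribute across the two target components, account for rational bridges on the target side (corresponding to source nodes that behave as unramified or simply ramified double points in the admissible sense), and ensure that no combinatorial cancellation occurs after projection to $\barM_{2, 20}$. This kind of enumeration, whose generalization to Hurwitz cycles with positive-genus targets of higher degree forms the main technical content of the present paper, is what makes the argument succeed.
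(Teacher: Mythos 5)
There is a genuine gap: your reduction to the Graber--Pandharipande case $(g,m)=(2,10)$ cannot be carried out for $m<10$, which includes the most important instances of the theorem (e.g.\ $(g,m)=(12,0)$, giving non-tautological classes on $\barM_{12}$). The gluing map $\barM_{2,21}\times\barM_{g-2,2(m-10)+1}\to\barM_{g,2m}$ you propose requires placing $10$ of the $m$ conjugate pairs on the genus-$2$ vertex, so it does not exist unless $m\ge 10$; and if you instead put fewer marked points on the genus-$2$ vertex, the resulting K\"unneth factor is a bielliptic locus in $\barM_{2,k}$ with $k\le 2m$ small, which is \emph{not} known (and for $k$ small is false, e.g.\ everything in $H^{*}(\barM_2)$ is tautological) to be non-tautological, so no contradiction arises. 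This is exactly why van Zelm's argument, which this paper follows and generalizes, does not reduce to $(2,10)$: the base case is the entire family $g+m=12$, handled simultaneously by pulling back to the stratum $\xi:\barM_{1,11}\times\barM_{1,11}\to\barM_{g,2m}$ obtained by gluing $g-1$ pairs of points, where the essential contribution is a non-zero multiple of the diagonal class $[\Delta]$; this fails to have tautological K\"unneth decomposition because of the odd class $\omega\in H^{11}(\barM_{1,11})$ (Lemmas \ref{even_algebraic}--\ref{odd_cohomology}). The remaining cases $g+m>12$ are then reached by two separate inductions (a rational tail to increase $m$, Lemma \ref{add_pair}, and an elliptic tail to increase $g$, \S\ref{g_induction}), not by a single degeneration to a fixed base point.

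Even in the range $m\ge 10$ where your gluing map exists, the step ``$+$ (other tautological terms)'' is doing all the work and is not justified. On $\barM_{1,11}\times\barM_{1,11}$ the analysis closes up because of two special structural facts: all even (hence all algebraic) classes on $\barM_{1,11}$ are tautological (Petersen, Lemma \ref{even_algebraic}), and all algebraic classes supported on the boundary of the product have TKD (Lemma \ref{boundary_taut}). No analogues of these are available for $\barM_{2,21}$, so certifying that the non-bielliptic boundary contributions are tautological on the genus-$2$ factor is essentially as hard as the theorem itself. Also note a smaller imprecision: the genus-$2$ K\"unneth factor arising from the bielliptic configuration is not $\pi^{*}[H_{2,10}]$ but the bielliptic locus with an additional marked \emph{ramification} point (the cover must be ramified over the node for the node of $X$ to be separating, cf.\ Lemma \ref{sep_node}); this is the class $\barH_{2/1,2,(10)^2,1}$, of codimension one more than $\pi^{*}[H_{2,10}]$, whose non-tautologicality is deduced from that of $[H_{2,10}]$ by a forgetful pushforward as in Lemma \ref{add_ram}.
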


In particular, when $g\ge12$, one obtains non-tautological classes on $\barM_{g}$.

The method as follows: suppose first that $g+m=12$. Then, consider the boundary stratum $\xi:\barM_{1,11}\times\barM_{1,11}\to\barM_{g,2m}$ gluing together $g-1$ pairs of points on opposite components. By \cite[Proposition 1]{gp} (see also Proposition \ref{nontaut_criterion}), the pullback of any tautological class to a boundary stratum has K\"{u}nneth decomposition (in cohomology) into tautological classes. However, a combinatorial calculation shows that the pullback of the pointed bielliptic class is a non-zero multiple of the class of the diagonal $\barM_{1,11}\to\barM_{1,11}\times\barM_{1,11}$, which cannot have tautological K\"{u}nneth decomposition owing to the existence of odd cohomology on $\barM_{1,11}$, see \S\ref{m11_intro}.

When $g+m>12$, one can induct on $g$ and use the same criterion with different boundary strata to conclude, see \cite[Lemma 12]{vanzelm}.

\subsection{New results}

The goal of this paper to extend these results further to loci of curves (\textit{Hurwitz cycles}) admitting branched covers of arbitrary degree and arbitrary (positive) target genus. More precisely, let $\barH_{g/h,d}$ denote the moduli space (\textit{Hurwitz space}) of Harris-Mumford admissible covers $f:X\to Y$ of degree $d$, where $X,Y$ have genus $g,h$, respectively, and let $\phi:\barH_{g/h,d}\to\barM_g$ be the map remembering the curve $X$ (possibly with non-stable components contracted). We review the theory of admissible covers in \S\ref{hm_section} and \S\ref{galois_section}.

We expect the following:

\begin{conjecture}\label{main_conj}
Suppose $h\ge1$ and $d\ge2$. Then, for all sufficiently large $g$ depending on $h$ and $d$, the class $\phi_{*}([\barH_{g/h,d}])\in H^{*}(\barM_g)$ is non-tautological.
\end{conjecture}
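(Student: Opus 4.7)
The approach is to extend the boundary-restriction strategy of Graber--Pandharipande and van Zelm. One seeks a boundary stratum $\xi_\Gamma : \barM_\Gamma \to \barM_g$ such that $\xi_\Gamma^* \phi_{*}([\barH_{g/h,d}])$ fails to admit a tautological K\"{u}nneth decomposition on $\barM_\Gamma$; by the criterion of Proposition \ref{nontaut_criterion}, this forces $\phi_{*}([\barH_{g/h,d}])$ to be non-tautological. The detectable non-tautological contribution should, as in \cite{vanzelm}, be a multiple of a diagonal class on a product of copies of some $\barM_{1,n}$ with $n \geq 11$, which carries the odd cohomology of $\barM_{1,11}$.

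First, I would set up a base case at some minimal $g_0 = g_0(h,d)$ and choose $\Gamma$ so that $\barM_\Gamma = \barM_{1,n}\times\barM_{1,n}$, glued along enough edges to produce the correct source genus. Using the admissible-covers boundary pullback formula of Schmitt--van Zelm and the author, one expands $\xi_\Gamma^*\phi_{*}([\barH_{g/h,d}])$ as a weighted sum over admissible-cover dual graphs $\widetilde\Gamma$ lying over $\Gamma$. Each summand is a product of Hurwitz-type cycles on the two $\barM_{1,n}$ factors, weighted by automorphism and edge-multiplicity factors.

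Second, I would isolate the ``diagonal'' contributions --- those $\widetilde\Gamma$ whose combinatorics force an identification of the two components of the target $Y$ --- and show that their combined coefficient is, up to bookkeeping, a count of connected genus-$h$, degree $d$ covers of a fixed elliptic curve with prescribed ramification. This count is precisely the $d$-th Fourier coefficient of a specific modular form (arising via the Bloch--Okounkov/Dijkgraaf formalism for Hurwitz numbers of elliptic targets), and its non-vanishing is the arithmetic hypothesis one assumes. Third, the induction $g_0 \to g$ would transport non-tautologicality upward by attaching an elliptic tail or passing through a non-separating boundary divisor, along the lines of \cite[Lemma 12]{vanzelm}, using that the restriction map on tautological classes respects the Künneth structure.

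The main obstacle will be the cancellation analysis in the second step. For $d > 2$ or $h > 1$, the set of contributing graph types $\widetilde\Gamma$ is substantially richer than in the bielliptic case, and one must verify that the non-diagonal contributions all lie in the tautological part of the K\"{u}nneth decomposition on $\barM_{1,n}\times\barM_{1,n}$ (so they cannot interfere with the non-tautological detection), while the diagonal coefficient sums without cancellation to the stated Fourier coefficient. Carrying out this reduction --- that is, matching an intricate geometric sum over admissible-cover graphs to a single arithmetic invariant of a modular form --- is the crux of the argument, and is precisely the reason the conclusion must currently be made conditional on the modular-form non-vanishing.
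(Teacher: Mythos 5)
The statement you are addressing is stated in the paper as a conjecture, and the paper does not prove it; what it proves (Theorem \ref{main_thm}) is a strictly weaker statement, conditional on $a_d\neq 0$ for $\eta(q)^{48}=\sum a_dq^d$, and concerning classes on $\barM_{g,2m_2+dm_d+n}$ decorated with auxiliary marked points rather than the undecorated class $\phi_{*}([\barH_{g/h,d}])$ on $\barM_g$. Your outline reproduces the paper's strategy for that weaker theorem --- detection on $\barM_{1,11}\times\barM_{1,11}$ via odd cohomology, then induction --- but it does not close the two gaps separating the theorem from the conjecture. The one you miss entirely is that the marked points are not removable bookkeeping: in the base case one needs $g+m_2=12$, so the $m_2$ marked pairs are needed to reach $\barM_{1,11}$ at all when $g<12$; and in the induction on the target genus $h$ one needs marked $d$-tuples with equal image precisely to force the $d$ elliptic tails of the comb stratum to lie in a single fiber of the admissible cover (Lemma \ref{higher_genus_target_contributions}). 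Without that constraint the contributing strata are not controlled and the inductive class is not singled out. The paper states explicitly that it cannot remove these points for large $g$, so your third step does not land on $\barM_g$ itself when $h>1$, and the conjecture as stated remains open even granting the modular-form hypothesis.

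Second, your identification of the diagonal coefficient is not the mechanism that actually occurs. The non-tautological contribution does not arise from a Bloch--Okounkov-type count of connected genus-$h$, degree-$d$ covers of a fixed elliptic curve; it arises from strata whose generic cover restricts, over the common elliptic target $Y_1$, to a disjoint pair of isogenies $X_1\to Y_1$, $X_1'\to Y_1$ of degrees $d_1+d_1'=d$. The induced operator on the two-dimensional space $H^{11}(\barM_{1,11})$ is a Hecke-type operator acting on the holomorphic $11$-form $\omega$ by the eigenvalue $\tau(d_1)\tau(d_1')$ (Lemma \ref{hecke_lemma}), and summing over splittings produces the $d$-th coefficient of $\eta(q)^{48}=(\eta(q)^{24})^2$. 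This is where the modular form enters, and it is why the hypothesis $a_d\neq0$ cannot be discharged: absent a proof of this Lehmer-type non-vanishing, neither your proposal nor the paper establishes the conjecture for all $d\ge2$. Your own closing paragraph concedes the conditionality, which is an admission that the proposal proves the partial result, not the conjecture.
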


Our methods ultimately fall short of proving Conjecture \ref{main_conj} in full in the following two ways. First, we require a mild condition on $d$ (independent of $g,h$) given by the non-vanishing of the $d$-th Fourier coefficient of a certain modular form. Second, in order for the admissible covers appearing in the pullbacks of our Hurwitz cycles by boundary strata to have the desired topological types, we will need to add additional marked points on our covers satisfying the condition that their images are equal. This is analogous to the situation of \cite{gp,vanzelm}, but in contrast we are not able in general to remove all of the marked points for sufficiently large $g$.

Let $\barH_{g/h,d,(m_2)^2(m_d)^d,n}\in H^{*}(\barM_{g,2m_2+dm_d+n})$ be the locus of genus $g$ curves admitting a degree $d$ cover of a genus $h$ curve, with $m_2$ marked pairs and $m_d$ marked $d$-tuples of points with equal image, along with $n$ marked ramification points. More precisely, let $\barH_{g/h,d,m_2+m_d}$ be the Harris-Mumford space parametrizing covers $f:X\to Y$ as in $\barH_{g/h,d}$, with the data of $m_2+m_d$ additional marked points on $Y$ and their pre-images on $X$, and let $\barH_{g/h,d,(m_2)^2(m_d)^d,n}$ be the class obtained by pushing forward the fundamental class by the map remembering $X$ with the desired marked points. (See also \S\ref{hm_section}.) We then have the following.

\begin{thm}\label{main_thm}
Consider the modular form of weight 24
\begin{equation*}
\eta(q)^{48}=q^2\prod_{\ell\ge1}(1-q^{\ell})^{48}=\sum_{d\ge2}a_dq^d
\end{equation*}
and fix $d$ such that $a_d\neq0$. 

Then, the class $\barH_{g/h,d,(m_2)^2(m_d)^d,n}\in H^{*}(\barM_{g,2m_2+dm_d+n})$ is non-tautological in the following cases.
\begin{itemize}
\item \underline{$h=1$}: $g\ge2$ and $g+m_2\ge12$
\item \underline{$h>1$, $d=2$}: $g\ge2h$, $g+m_2\ge 2h+10$, and $m_2\ge1$
\item \underline{$h>1$, $d>2$}: $g\ge d(h-1)+2$, $g+m_2+m_d\ge (2d-3)(h-1)+12$, and $m_d\ge (d-3)(h-1)+1$
\end{itemize}
\end{thm}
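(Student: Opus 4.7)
The plan is to extend the method of Graber-Pandharipande and van Zelm via the boundary pullback criterion (Proposition~\ref{nontaut_criterion}): it suffices to produce a boundary stratum $\xi_\Gamma: \barM_\Gamma \to \barM_{g,N}$ such that the K\"unneth decomposition of $\xi_\Gamma^*\, \barH_{g/h,d,(m_2)^2(m_d)^d,n}$ contains a component that is not a product of tautological classes on the vertex factors. In van Zelm's base case ($d=2$, $h=1$), the pullback is a non-zero multiple of the diagonal on $\barM_{1,11}\times\barM_{1,11}$, whose non-tautological component is detected by the cusp form $\Delta=\eta^{24}$ generating $H^{11,0}(\barM_{1,11})$. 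The weight $24$ form $\eta^{48}$ in our statement analogously points to a factor of $\barM_{1,n}$ (with $n$ of order $23$) whose non-tautological cohomology is controlled by $\eta^{48}$.

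First I would handle the base case of each of the three subcases by choosing a boundary stratum whose dual graph forces any degenerating admissible cover to have a prescribed topological type yielding such a factor of $\barM_{1,n}$. Roughly, both source and target curves must be broken sufficiently that the remaining moduli of $X$ along one vertex becomes $\barM_{1,n}$ with marked points forced equal in pairs or $d$-tuples, producing a diagonal-like cycle inside a product $\barM_{1,n}\times\barM_{1,n}$. The intersection formulas of Schmitt-van Zelm and the author then express $\xi_\Gamma^*\,\barH_{g/h,d,\ldots}$ as a weighted sum over topological types of admissible covers compatible with this prescribed dual graph.

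The heart of the argument is to compute the coefficient of the diagonal K\"unneth component and show it is non-zero. I expect this coefficient to be a $\bQ$-linear combination of counts of connected degree $d$ admissible covers of a fixed nodal rational or elliptic target of given topological type, and the key identity should realize $\eta^{48}$ as the corresponding Hurwitz generating series, so that the coefficient picks out (a non-zero rational multiple of) $a_d$; the hypothesis $a_d\neq 0$ then prevents accidental vanishing.

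The three cases differ in how the target must first be degenerated: for $h=1$ the target can stay smooth while only $X$ breaks, whereas for $h>1$ the target must itself be broken into pieces whose arithmetic genera sum to $h$, which explains the numerical constraints $g\ge d(h-1)+2$, $m_2\ge 1$, and $m_d\ge(d-3)(h-1)+1$. Once each base case is settled, I would induct on $g$ (and on $m_2, m_d$) by pulling back via boundary strata whose smallest vertex carries the already-treated Hurwitz class, as in \cite[Lemma~12]{vanzelm}. The main obstacle will be the numerical identification of the diagonal coefficient with $a_d$: it requires an explicit enumeration of admissible cover topological types together with their automorphism groups in each regime, and ruling out cancellations between contributions that could in principle collapse the coefficient to zero for reasons unrelated to the modular form.
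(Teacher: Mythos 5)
Your overall skeleton --- the Graber--Pandharipande boundary-pullback criterion, a base case detected by odd cohomology on a genus-one moduli space, then induction on $g$, $m_2$, and $h$ --- matches the paper. But the mechanism you propose for producing the coefficient $a_d$ is not the one that works, and as stated it is a genuine gap. You suggest that $\eta^{48}$, being of weight $24$, should ``point to a factor $\barM_{1,n}$ with $n$ of order $23$'' whose non-tautological cohomology it controls, or alternatively that $\eta^{48}$ should appear as a Hurwitz generating series. Neither happens. The detection space is still $\barM_{1,11}\times\barM_{1,11}$ (this is forced by the numerics $g+m_2\ge 12$ in the statement), and its odd cohomology is governed by the \emph{weight $12$} form $\eta^{24}$ via the holomorphic $11$-form $\omega$. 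The weight $24$ form enters only because $\eta^{48}=(\eta^{24})^2$: the paper shows (Lemmas \ref{type_12_contribution} and \ref{type_3_contribution}) that the only potentially non-TKD contributions to the pullback come from strata whose generic cover restricts over a common elliptic target $Y_1$ to a disjoint pair of isogenies $X_1\to Y_1$, $X_1'\to Y_1$ of degrees $d_1+d_1'=d$. Each such stratum is the pullback of the diagonal $[\Delta]\in H^{*}(\barM_{1,11}\times\barM_{1,11})$ under a product of correspondences, and the key computation (Lemma \ref{hecke_lemma}) identifies the correspondence operator $T_k=\phi_*\delta^*$ on $H^{11}(\barM_{1,11})$ with the $k$-th Hecke operator on weight $12$ cusp forms, acting by $\tau(k)$. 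The odd part of $[\Delta]$ is $-\omega\otimes\overline{\omega}-\overline{\omega}\otimes\omega$, so the total odd contribution carries the factor $\sum_{d_1+d_1'=d}\tau(d_1)\tau(d_1')=a_d$. Without this Hecke computation and convolution identity you have no route from the geometry to $a_d$, and your worry about ``ruling out cancellations'' is resolved only by this exact evaluation: the coefficient \emph{is} $a_d$ up to a non-zero constant, which is precisely why the hypothesis $a_d\neq 0$ appears.

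Two smaller corrections. For $h=1$ the target does degenerate in the base case: $Y$ acquires $g-1$ rational tails each carrying two branch points, which is what forces the equality $s+t=11$ and pins down the contributing topological type. And the induction for $h>1$ is not a direct repeat of van Zelm's genus induction; it pulls back to a comb stratum with $d$ elliptic tails attached in a single fiber, which is why the auxiliary marked $d$-tuples (the parameter $m_d$) must be introduced and why the numerical conditions in the third case take the form they do.
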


The question of non-vanishing of the Fourier coefficients of $\eta(q)^{48}$ appears to be difficult. As to the related question of the non-vanishing of the Ramanujan tau function $\tau(d)$, that is, the Fourier coefficients of $\eta(q)^{24}$, an old conjecture of Lehmer  \cite{lehmer} predicts that $\tau(d)\neq0$; it is known that Lehmer's conjecture holds for $d\lesssim 8\cdot 10^{23}$ \cite{dhz}.

Because, by definition, tautological classes in singular cohomology are the images of tautological classes in Chow, the corresponding Chow classes are also non-tautological. We also obtain immediately a generalization of \cite[Theorem 2]{vanzelm} and \cite[Theorem 3]{gp} to the \textit{open} loci on $\barM_{g,2m_2}$ of $d$-elliptic curves for $d$ arbitrary when $g+m_2=12$, see Corollary \ref{elliptic_open}.

In order to apply the criterion of Graber-Pandharipande to prove Theorem \ref{main_thm}, one needs a sufficiently robust way to compute pullbacks of Hurwitz cycles on $\barM_{g,N}$ to boundary strata. The development of this method initiated in the work of Schmitt-van Zelm \cite{svz} (see also \S\ref{galois_intersection}) for \textit{Galois} Hurwitz cycles, which was incorporated in to a theory of \textit{H-tautological classes} on moduli spaces of admissible Galois covers in \cite{lian_htaut}. In particular, this new framework allows for the intersection of arbitrary (Harris-Mumford) admissible cover cycles with boundary strata, see \cite[\S 6]{lian_htaut}, which we review in \S\ref{hm_boundary_int}.

\subsection{Summary of proof}
The proof of Theorem \ref{main_thm} proceeds by induction on $g$ and $h$ in three main steps. We may reduce to the case $n=0$ (in all three steps, see Lemma \ref{add_ram}) and $m_d=0$ (in steps 1 and 2, see Lemma \ref{d_to_2}).
\begin{itemize}
\item \underline{Step 1 ($h=1, g+m_2=12$):} We pull back the cycle $\barH_{g/1,d,(m_2)^2}\in H^{*}(\barM_{g,2m_2})$ to the boundary stratum $\barM_{1,11}\times\barM_{1,11}$ obtained by gluing $g-1$ pairs of nodes on the two elliptic components together. We find in Lemmas \ref{type_12_contribution} and \ref{type_3_contribution} that the only possibly non-tautological contributions in the pullback come from from pairs of isogenies $X_1\to Y_1$, $X'_1\to Y_1$ of total degree $d$ over a common target.

Then, the contribution from odd classes on $\barM_{1,11}$ is governed by a Hecke-type operator on $H^{11}(\barM_{1,11})$, which we compute using the description of the non-trivial classes in $H^{11}(\barM_{11})$ in terms of the weight 12 cusp form $\eta(q)^{24}$, see Lemma \ref{hecke_lemma}. Therefore, this contribution is non-zero if and only if $a_d\neq0$, and we conclude that $\barH_{g/1,d,(m_2)^2}\in H^{*}(\barM_{g,2m_2})$ for such $d$.

\item \underline{Step 2 ($h=1, g+m_2>12$):} We induct on $m_2$ and $g$ by pulling back $\barH_{g/1,d,(m_2)^2}\in H^{*}(\barM_{g,2m_2})$ to boundary divisors. The induction on $m_2$ is addressed in Lemma \ref{add_pair} by pulling back to a divisor on $\barM_{g,2(m_2+1)}$ of curves with a 2-pointed rational tail, and the induction on $g$ is addressed in \S\ref{g_induction} by pulling back to a divisor on $\barM_{g,2m_2}$ of curves with an elliptic tail.

\item \underline{Step 3 ($h>1$):} We induct on $h$ by pulling back $\barH_{g/h,d,(m_2)^2(m_d)^d}\in H^{*}(\barM_{g,2m_2+dm_d})$ to a boundary stratum of curves with $d$ elliptic tails attached to a spine of genus $g-d$; this is carried out in \S 6. Here, we require the condition that the $d$ attachment nodes appear in the same fiber of an admissible cover, requiring us to introduce the parameter $m_d$.
\end{itemize}


\subsection{Conventions}

We work exclusively over $\bC$. Cohomology groups are taken with rational coefficients except when otherwise noted; we will also need to pass to complex coefficients to study the odd cohomology class $\omega$ on $\barM_{1,11}$ coming from the weight 12 modular form $\eta(q)^{24}$. We will frequently identify homology and cohomology classes in complementary degrees via Poincar\'{e} duality without mention. All curves are assumed projective and connected with only nodes as singularities, except when otherwise noted, and the genus of a curve refers to its arithmetic genus. All moduli spaces are understood to be stacks, rather than coarse spaces.

\subsection{Acknowledgments}

We thank Alessio Cela, Gavril Farkas, Johan de Jong, Dan Petersen, Johannes Schmitt, and Jason van Zelm for useful discussions related to this paper. This project was completed with the support of an NSF Postdoctoral Fellowship, grant DMS-2001976.

\section{Preliminaries}\label{prelim}

\subsection{Tautological classes}\label{taut_review}

We recall the standard definition:

\begin{defn}
The \textbf{tautological ring} is the smallest system of subrings $R^{*}(\barM_{g,n})\subset A^{*}(\barM_{g,n})$ containing all $\psi$ and $\kappa$ classes and closed under pushforwards by all boundary morphisms $\xi_{\Gamma}:\barM_{\Gamma}\to\barM_{g,n}$ (indexed by stable graphs $\Gamma$) and all forgetful morphisms $\pi:\barM_{g,n+1}\to\barM_{g,n}$.

We also denote the image of the tautological ring in singular cohomology under the cycle class map by $RH^{*}(\barM_{g,n})\subset H^{*}(\barM_{g,n})$.
\end{defn}

We will work primarily in singular cohomology. However, the Hurwitz classes we will consider are all algebraic, and after we have proven that they are non-tautological in cohomology, it is immediate by definition that they are also non-tautological in Chow.

Additive generators for the tautological ring may be given in terms of \textit{decorated boundary classes}, which can be intersected explicitly in terms of the combinatorics of dual graphs, see \cite[Appendix A]{gp}. In particular, one obtains the following criterion, which will be our primary tool for detecting non-tautological classes.

\begin{prop}\cite[Proposition 1]{gp}\label{nontaut_criterion}
Suppose that $\alpha\in RH^{*}(\barM_{g,n})$ is a tautological class, and let $\xi_{\Gamma}:\barM_{\Gamma}\to\barM_{g,n}$ be a boundary class. Then, on the space $\barM_{\Gamma}=\prod_{v\in V(\Gamma)}\barM_{g_v,n_v}$, the pullback $\xi_{\Gamma}^{*}\alpha$ has \textit{tautological K\"unneth decomposition (TKD)}, that is,
\begin{equation*}
\xi_{\Gamma}^{*}\alpha\in \bigotimes_{v\in V(\Gamma)}RH^{*}(\barM_{g_v,n_v})\subset H^{*}\left(\prod_{v\in V(\Gamma)}\barM_{g_v,n_v}\right)
\end{equation*}
\end{prop}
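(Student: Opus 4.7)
The plan is to reduce to a generating set of $RH^{*}(\barM_{g,n})$ and then verify the TKD property directly via the combinatorics of boundary-stratum intersections. By the standard structural description of the tautological ring (see \cite[Appendix A]{gp}), $RH^{*}(\barM_{g,n})$ is additively spanned by \emph{decorated boundary classes}, i.e. classes of the form $\xi_{\Gamma',*}\bigl(\prod_{v'\in V(\Gamma')}\alpha_{v'}\bigr)$, where $\Gamma'$ is a stable graph of genus $g$ with $n$ legs and each $\alpha_{v'}$ is a monomial in $\psi$ and $\kappa$ classes on $\barM_{g_{v'},n_{v'}}$. Note in particular that the forgetful-pushforward generators have been absorbed into $\kappa$-classes by this reformulation. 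By linearity, it suffices to prove the claim when $\alpha$ is such a decorated generator.

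The next step is to compute the pullback $\xi_\Gamma^{*}\xi_{\Gamma',*}\bigl(\prod_{v'}\alpha_{v'}\bigr)$ using excess intersection on the fiber product $\barM_\Gamma\times_{\barM_{g,n}}\barM_{\Gamma'}$. The combinatorial intersection formula recalled in \cite[Appendix A]{gp} identifies this fiber product (up to automorphism factors) as a disjoint union of boundary strata $\barM_{\Gamma''}$ indexed by \emph{common degenerations} $\Gamma''$ of $\Gamma$ and $\Gamma'$ — stable graphs equipped with contractions to both $\Gamma$ and $\Gamma'$. On each component $\barM_{\Gamma''}$, the excess normal bundle contribution is a product of factors of the form $-\psi-\psi'$ over the edges of $\Gamma''$ that are shared between the two contractions.

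The crucial observation is that the contraction $\Gamma''\to\Gamma$ partitions $V(\Gamma'')$ into subsets collapsing to each vertex of $\Gamma$, inducing a product decomposition $\barM_{\Gamma''}=\prod_{v\in V(\Gamma)}\barM_{\Gamma''_v}$ compatible with the projection to $\barM_\Gamma=\prod_v\barM_{g_v,n_v}$. The pullback of the decorations $\alpha_{v'}$ via $\barM_{\Gamma''}\to\barM_{\Gamma'}$, multiplied by the excess $\psi$-class factors, is already a pure tensor relative to this decomposition. Pushing forward vertex-by-vertex into each $\barM_{g_v,n_v}$ yields a tensor product of tautological classes, since each factor is the pushforward of a monomial in $\psi$ and $\kappa$ classes from $\barM_{\Gamma''_v}$, which lies in $RH^{*}(\barM_{g_v,n_v})$ by definition. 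Summing over the finitely many common degenerations $\Gamma''$ produces an element of $\bigotimes_v RH^{*}(\barM_{g_v,n_v})$.

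The principal hurdle is purely combinatorial: one must correctly identify the fiber product $\barM_\Gamma\times_{\barM_{g,n}}\barM_{\Gamma'}$ and track the excess contributions in terms of common degenerations. This is exactly the content of the intersection formula in \cite[Appendix A]{gp}, so once that machinery is invoked the proof is essentially formal, with the key conceptual point being the compatibility of the vertex decomposition of $\Gamma''$ with the product structure of $\barM_\Gamma$.
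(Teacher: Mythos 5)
Your argument is correct and is essentially the proof of \cite[Proposition 1]{gp}, which the paper cites rather than reproduces: reduce to decorated boundary generators, compute $\xi_{\Gamma}^{*}\xi_{\Gamma',*}$ via the excess intersection formula over common degenerations $\Gamma''$, and observe that everything respects the product decomposition of $\barM_{\Gamma}$ induced by the contraction $\Gamma''\to\Gamma$. The only imprecision is your claim that the excess factors are ``already a pure tensor'': a factor $-\psi_h-\psi_{h'}$ attached to a shared edge whose two endpoints lie over \emph{distinct} vertices of $\Gamma$ is a sum of two pure tensors rather than a single one, but since each summand is still a product of tautological classes this does not affect the conclusion that the total class lies in $\bigotimes_{v}RH^{*}(\barM_{g_v,n_v})$.
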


In particular, when the K\"unneth decomposition (TKD) of $\xi_{\Gamma}^{*}\alpha$ includes non-trivial contributions from odd cohomology, we may immediately conclude that $\alpha$ is non-tautological, as the tautological ring lives in even degree.

\subsection{Cohomology of $\barM_{1,11}$}\label{m11_intro}

Following \cite{gp,vanzelm}, we will detect classes without TKD via the existence of odd cohomology on $\barM_{1,11}$. Here, we collect the facts that we will need.

\begin{lem}\cite[Corollary 1.2]{petersen}\label{even_algebraic}
All even-dimensional classes (and hence, all algebraic classes) on $\barM_{1,11}$ are tautological, that is, $RH^{*}(\barM_{1,11})=H^{2*}(\barM_{1,11})$.
\end{lem}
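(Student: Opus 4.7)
The plan is to compute $H^{*}(\barM_{1,11};\bQ)$ via the stratification by dual graph and Getzler's explicit description of $H^{*}(\barM_{1,n})$ as an $S_n$-representation, and then to compare with the tautological subring weight-graded piece by weight-graded piece using Deligne's mixed Hodge theory. By Getzler's decomposition, $H^{*}(M_{1,n})$ splits into a tautological summand (concentrated in even degree and of Hodge-Tate type) together with cuspidal contributions coming from modular forms for $\mathrm{SL}_2(\bZ)$; by Eichler--Shimura these cuspidal contributions live in strictly odd total degree and carry a pure Hodge structure of weight strictly larger than any tautological class of the same degree. For $n=11$ the first non-trivial cuspidal contribution comes from the weight-$12$ cusp form $\eta(q)^{24}$ and yields the classes $\omega,\overline{\omega}\in H^{11}(\barM_{1,11})$ used later in the paper.

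Next I would stratify $\barM_{1,11}$ by topological type. Each open stratum $M_\Gamma$ is a product $\prod_v M_{g_v,n_v}$ with $g_v\in\{0,1\}$ and $\sum_v g_v=1$. For genus zero factors, Keel's theorem gives $H^{*}(M_{0,n_v})=RH^{*}(M_{0,n_v})$, concentrated in even degrees. On every \emph{proper} boundary stratum, the unique genus one factor has at most $10$ legs (original markings together with half-edges), because stability of the dual graph forces each rational tail to absorb at least one extra leg; by Getzler's computation for $n\le 10$, its cohomology is therefore entirely tautological. The Deligne weight spectral sequence attached to the stratification then presents $H^{*}(\barM_{1,11})$ as an assembly of tautological contributions from boundary strata together with the open stratum's cohomology. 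Since the tautological ring is closed under Gysin pushforwards by boundary morphisms $\xi_{\Gamma,*}$ and under multiplication by $\psi$- and $\kappa$-classes, every boundary contribution lands in $RH^{*}(\barM_{1,11})$ in even degree, while the remaining non-tautological contributions all sit in odd degree.

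The main obstacle is to rule out that the odd cuspidal classes on the open stratum could combine via differentials of the spectral sequence with tautological even classes to produce extra non-tautological even cohomology on $\barM_{1,11}$. This is prevented by weight purity in the mixed Hodge structure: tautological classes lie in Hodge-Tate pieces $\mathrm{Gr}^W_{2p}$ with Hodge type $(p,p)$, while cuspidal pieces carry a distinct pure Hodge structure of strictly greater weight prescribed by the weight of the modular form, so the two sit in disjoint weight-graded summands and no differential can mix them. The stratum-by-stratum comparison thus becomes an equality of weight-graded pieces, giving $RH^{2*}(\barM_{1,11})=H^{2*}(\barM_{1,11})$. Since algebraic classes are of type $(p,p)$ and hence supported in even degree, the corollary that all algebraic classes are tautological follows immediately.
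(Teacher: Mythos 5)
The paper offers no proof of this statement: it is quoted directly from Petersen's Corollary 1.2, so there is nothing internal to compare your argument against, and it has to stand on its own. As written, it does not. The central gap is at the very end: your stratification-plus-weight argument can at best show that $H^{2*}(\barM_{1,11})$ is of Hodge--Tate type and that the cuspidal classes $\omega,\overline{\omega}$ are confined to odd degree and cannot leak into even cohomology. But ``even-degree and of type $(p,p)$'' does not imply ``tautological'' --- that implication is precisely the content of Petersen's theorem, and deducing it from Hodge type alone would amount to an unproved case of the Hodge conjecture together with the statement that all algebraic classes are tautological, which is again the claim being proved. Concretely, you never exhibit tautological classes spanning $H^{2k}(\barM_{1,11})$; you only argue that nothing of non-Tate type sits there. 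The even cohomology of the open stratum (which in Getzler's decomposition contains Eisenstein-type and mixed contributions, not obviously polynomials in $\psi$-classes) is exactly where this spanning statement has real content, and your sketch passes over it.

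Moreover, two of your supporting steps are themselves unproved instances of the same statement. The observation that the genus-one factor of every proper boundary stratum carries at most $10$ special points is correct (each rational tree must absorb at least two legs), but the follow-up ``by Getzler's computation for $n\le 10$, its cohomology is therefore entirely tautological'' is not something Getzler proves: he computes ($S_n$-equivariant) Hodge and Betti numbers, which tell you the cohomology is even and of Tate type for $n\le 10$, not that it is generated by $\psi$, $\kappa$ and boundary classes. So the boundary input to your spectral sequence needs the $n\le 10$ case of the very lemma being proved, and no induction is set up. Finally, the Hodge-theoretic bookkeeping is off: on the smooth proper stack $\barM_{1,11}$ the group $H^{11}$ is pure of weight $11$ with Hodge types $(11,0)+(0,11)$; the cuspidal classes are not ``of strictly larger weight than tautological classes of the same degree'' (there are no tautological classes in odd degree at all), and the correct separating invariant is Hodge type and parity of cohomological degree, not weight. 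None of this makes the overall strategy hopeless --- it is close in spirit to Petersen's actual argument --- but as written the proof assumes its conclusion at the two decisive points.
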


\begin{lem}\cite[Lemma 8(i)]{vanzelm}\label{boundary_taut}
All algebraic classes on $\barM_{1,11}\times\barM_{1,11}$ supported on the boundary have TKD.
\end{lem}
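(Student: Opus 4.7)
The plan is to reduce to a single irreducible boundary divisor of the product and then apply the Künneth formula, using the fact that one factor in the covering product always has cohomology concentrated in even degree.

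By linearity of the statement, it suffices to consider an irreducible algebraic cycle $Z\subset \barM_{1,11}\times\barM_{1,11}$ contained in a single irreducible component of the boundary. Every such component has the form $D\times\barM_{1,11}$ or $\barM_{1,11}\times D$, where $D\subset\barM_{1,11}$ is a boundary divisor; after symmetrizing, assume the former. The divisor $D$ is the image of a finite gluing map $\xi_\Gamma:\barM_\Gamma\to\barM_{1,11}$ for a stable graph $\Gamma$ of genus $1$ with one edge, so $\barM_\Gamma$ is either $\barM_{0,13}$ (if $\Gamma$ has one loop at a genus $0$ vertex) or $\barM_{0,|S|+1}\times\barM_{1,12-|S|}$ for some subset $S\subset\{1,\dots,11\}$ with $2\le|S|\le 10$ (if $\Gamma$ has two vertices).

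The first key observation is that in either case $\barM_\Gamma$ is a product of spaces of the form $\barM_{0,n}$ and $\barM_{1,m}$ with $m\le 10$, so $H^*(\barM_\Gamma)$ is entirely tautological and concentrated in even degree (the statement about $\barM_{1,m}$ with $m\le 10$ is where the sharpness of Petersen's bound enters). Pulling $[Z]$ back along $\xi_\Gamma\times\id$ gives an algebraic class $\widetilde\alpha\in H^{2k}(\barM_\Gamma\times\barM_{1,11})$, which by Künneth decomposes as
\begin{equation*}
\widetilde\alpha=\sum_{i+j=2k}\widetilde\alpha_{i,j},\qquad \widetilde\alpha_{i,j}\in H^i(\barM_\Gamma)\otimes H^j(\barM_{1,11}).
\end{equation*}
Because $H^{\mathrm{odd}}(\barM_\Gamma)=0$, only summands with both $i$ and $j$ even survive, and each such summand lies in $R^*(\barM_\Gamma)\otimes H^{\mathrm{even}}(\barM_{1,11})$. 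By Lemma \ref{even_algebraic}, $H^{\mathrm{even}}(\barM_{1,11})=RH^*(\barM_{1,11})$, so $\widetilde\alpha\in R^*(\barM_\Gamma)\otimes RH^*(\barM_{1,11})$.

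Finally, the boundary pushforward $(\xi_\Gamma)_*$ sends $R^*(\barM_\Gamma)$ into $R^*(\barM_{1,11})$ by the very definition of the tautological ring, so $[Z]=\frac{1}{|\Aut(\Gamma)|}(\xi_\Gamma\times\id)_*\widetilde\alpha$ lies in $RH^*(\barM_{1,11})\otimes RH^*(\barM_{1,11})$, which is TKD as required. The only potential obstacle is the possibility of an odd--odd contribution $H^{11}(\barM_{1,11})\otimes H^{11}(\barM_{1,11})$ in an algebraic class of even total degree; the main content of the argument is that such a contribution cannot be supported on the boundary, precisely because the cohomology of the covering product $\barM_\Gamma$ vanishes in odd degree.
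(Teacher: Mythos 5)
The paper does not prove this lemma at all---it is quoted directly from van Zelm \cite[Lemma 8(i)]{vanzelm}---so your attempt is being measured against the standard argument rather than anything in the text. Your overall strategy (reduce to an irreducible cycle in a single boundary component, replace that component by its normalization $\barM_\Gamma\times\barM_{1,11}$, observe that $H^{\mathrm{odd}}(\barM_\Gamma)=0$ so the K\"unneth decomposition is even$\otimes$even, invoke Petersen for the $\barM_{1,11}$ factor, and push forward) is exactly the right one and is essentially van Zelm's proof. However, the final step as written is false. The class $\widetilde\alpha=(\xi_\Gamma\times\id)^{*}[Z]$ lies in $H^{2k}$, while $(\xi_\Gamma\times\id)_{*}$ raises cohomological degree by $2$ (twice the codimension of the image), so $(\xi_\Gamma\times\id)_{*}\widetilde\alpha$ lives in $H^{2k+2}$ and cannot equal a multiple of $[Z]\in H^{2k}$. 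Indeed, by the projection formula $(\xi_\Gamma\times\id)_{*}(\xi_\Gamma\times\id)^{*}[Z]=c\,[Z]\cup([D]\otimes 1)$, an intersection product, not $c\,[Z]$; the identity $\xi_{\Gamma*}\xi_\Gamma^{*}=|\Aut(\Gamma)|\cdot\id$ that you are implicitly using does not hold for a boundary inclusion.

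The repair is routine and is what the standard argument actually does: since $\xi_\Gamma\times\id$ is finite and surjective onto $D\times\barM_{1,11}$, the irreducible cycle $Z$ is the image of an irreducible subvariety $\widetilde Z\subset\barM_\Gamma\times\barM_{1,11}$ (a component of the preimage dominating $Z$), and $(\xi_\Gamma\times\id)_{*}[\widetilde Z]=e\,[Z]$ for some $e>0$. Your K\"unneth argument then applies verbatim to the algebraic class $[\widetilde Z]$: it decomposes into $R^{*}(\barM_\Gamma)\otimes RH^{*}(\barM_{1,11})$ because $H^{\mathrm{odd}}(\barM_\Gamma)=0$ (here you should cite Getzler's vanishing of odd cohomology of $\barM_{1,m}$ for $m\le 10$ in addition to Petersen for ``even implies tautological''), and $(\xi_\Gamma)_{*}$ preserves tautologicality by definition. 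Two smaller points: your enumeration of boundary divisors omits the case $|S|=11$, i.e.\ $\barM_{0,12}\times\barM_{1,1}$, which is harmless since $\barM_{1,1}$ also has only even, tautological cohomology; and strictly speaking ``algebraic classes supported on the boundary'' should be unwound as $\bQ$-linear combinations of classes of subvarieties contained in the boundary, which is what your first reduction implicitly assumes.
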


\begin{lem}\cite{getzler}\label{odd_cohomology}
The odd cohomology of $\barM_{1,11}$ is two-dimensional, generated by the class of a holomorphic 11-form $\omega\in H^{0}(\barM_{1,11},\Omega^{11}_{\barM_{1,11}})\subset H^{11}(\barM_{1,11})$ and its conjugate.
\end{lem}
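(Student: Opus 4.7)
The plan is to combine the Hodge decomposition of the smooth projective variety $\barM_{1,11}$ with the modular description of its cohomology. Since $\barM_{1,11}$ is smooth and projective of complex dimension $11$, every odd-degree cohomology class sits in some Hodge piece $H^{p,q}$ with $p+q$ odd, and complex conjugation identifies $H^{p,q}$ with $\overline{H^{q,p}}$. It therefore suffices to produce a one-dimensional $H^{11,0}$ and to show that all other Hodge pieces of odd total degree vanish.

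For the first point, I would identify $H^{11,0}(\barM_{1,11})=H^{0}(\barM_{1,11},\Omega^{11}_{\barM_{1,11}})$ with the space of weight-$12$ cusp forms for $\SL_2(\bZ)$. This comes from the forgetful morphism $\pi:\barM_{1,11}\to\barM_{1,1}$: its relative dualizing sheaf, combined with the Hodge line bundle on $\barM_{1,1}$, identifies global top holomorphic forms on $\barM_{1,11}$ with weight-$12$ modular forms vanishing at the cusp (a version of Eichler-Shimura). Since $\dim S_{12}(\SL_2(\bZ))=1$ with generator $\Delta=\eta(q)^{24}$, this gives $\dim H^{11,0}=1$ with generator $\omega$, and by Hodge symmetry $\dim H^{0,11}=1$ with generator $\bar\omega$.

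For the vanishing of all other odd Hodge pieces, I would follow Getzler's strategy for computing the cohomology of $\barM_{1,n}$: apply the Leray spectral sequence for $\pi$, using the decomposition theorem to write $R\pi_{*}\bQ$ as a direct sum of shifted local systems on $\barM_{1,1}$ together with contributions supported on deeper boundary strata. Each local system over $\mathcal{M}_{1,1}$ is an extension of some symmetric power $\Sym^{k}V$ of the standard rank-$2$ local system, and by Eichler-Shimura the odd cohomology of $\mathcal{M}_{1,1}$ with coefficients in $\Sym^{k}V$ is governed by cusp forms of weight $k+2$. Comparing total degrees shows that the only contribution producing odd classes on $\barM_{1,11}$ comes from $k=10$ (weight $12$), giving exactly the pair $\omega,\bar\omega$.

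The main obstacle is inductively ruling out odd cohomology coming from the boundary strata of $\barM_{1,11}$, whose components are products of $\barM_{0,*}$'s with a single $\barM_{1,m}$ for $m<11$. This reduces to showing that $\barM_{1,m}$ has no odd cohomology for $m\le 10$, which follows from the same spectral-sequence argument combined with the classical vanishing $S_{k}(\SL_2(\bZ))=0$ for $k<12$. Once this is in hand, all non-top Hodge pieces of odd total degree on $\barM_{1,11}$ vanish, completing the proof.
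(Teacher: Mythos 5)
The paper does not prove this lemma itself; it cites Getzler and, in \S\ref{m11_intro}, supplements the citation with the explicit construction of $\omega=\eta(q)^{24}\,dz\wedge d\zeta_1\wedge\cdots\wedge d\zeta_{10}$. Your outline (Leray for the forgetful map to $\barM_{1,1}$, Eichler--Shimura identifying the cuspidal contribution of $\Sym^{10}V$ with $S_{12}=\langle\eta^{24}\rangle$, and induction over boundary strata using $S_k=0$ for $k<12$) is a correct sketch of essentially that same argument, so there is nothing substantive to add.
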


One can write down the form $\omega$ explicitly, see \cite[\S 2.3]{handbook}. Complex-analytically, the open locus $\cM_{1,11}$ may be regarded as the open subset of
\begin{equation*}
(\bH\times\bC^{10})/(\SL_2(\bZ)\ltimes (\bZ^2)^{10})
\end{equation*}
obtained by removing the diagonals and zero-sections. In the semi-direct product, $\SL_2(\bZ)$ acts on the factors $\bZ^2$  by via the \textit{conjugate} representation
\begin{equation*}
\begin{bmatrix} a & b \\ c & d \end{bmatrix}\cdot(x,y)=\begin{bmatrix} a & -b \\ -c & d \end{bmatrix}\begin{bmatrix} x \\ y\end{bmatrix}.
\end{equation*}
The group action is given by the formula
\begin{equation*}
\left(\begin{bmatrix} a & b \\ c & d \end{bmatrix},\{(x_i,y_i)\})\right)\cdot(z,\{\zeta_i\})=\left(\frac{az+b}{cz+d},\left\{\frac{\zeta_i}{cz+d}+x_i+y_i\cdot\frac{az+b}{cz+d}\right\}\right)
\end{equation*}

From here, one checks that 
\begin{equation*}
\omega=\eta(e^{2\pi iz})^{24}dz\wedge d\zeta_1\wedge\cdots\wedge d\zeta_{10},
\end{equation*}
where $\eta(e^{2\pi iz})^{24}$ is the normalized discriminant cusp form of weight 12 with Fourier expansion
\begin{equation*}
\eta(q)^{24}=q\prod_{\ell\ge1}(1-q^\ell)^{24},
\end{equation*}
is a non-zero holomorphic 11-form on $\cM_{1,11}$, and moreover extends to $\barM_{1,11}$.

\begin{rem}
The discriminant form $\eta(q)^{24}$ is often denoted simply $\Delta(q)$, but we will avoid this notation, reserving the letter $\Delta$ for the diagonal $\Delta:\barM_{1,11}\to\barM_{1,11}\times\barM_{1,11}$.
\end{rem}

\subsection{Hurwitz spaces and admissible covers}\label{hm_section}

Let $\cH_{g/h,d}$ denote the moduli space (\textit{Hurwitz space}) of degree $d$ simply ramified covers $f:X\to Y$, where $X,Y$ are smooth, connected, and proper curves of genus $g,h$, and let $\barH_{g/h,d}$ be its compactification by Harris-Mumford admissible covers, see \cite{hm}.

Recall that, by definition, the branched points of $Y$ are also marked, and the resulting marked curve is required to be stable. In addition, all pre-images on $X$ of the marked points (including those that are not ramification points) are marked, and the resulting curve is automatically stable. Therefore, we get maps

\begin{equation*}
\xymatrix{
\barH_{g/h,d} \ar[r]^{\phi} \ar[d]^{\delta} & \barM_{g,N} \\
\barM_{h,b} &
}
\end{equation*}
where $b=(2g-2)-d(2h-2)$ and $N=(d-1)b$. We refer to $\phi,\delta$ as the \textit{source} and \textit{target} maps, respectively. 

The target map $\delta$ is quasi-finite and surjective, with degree given by a \textit{Hurwitz number}, counting monodromy actions on the fibers of a degree $d$ cover. In particular, $\barH_{g/h,d}$ has dimension $3h-3+b$. The map $\delta$ is in addition unramified over $\cM_{h,b}$, so that $\cH_{g/h,d}$ is smooth, but in general ramified at any cover ramified over at least one node.

More precisely, let $f:X\to Y$ be an admissible cover, and let $\bC[[t_1,\ldots,t_{3h-3+b}]]$ be the complete local ring of the marked target curve. Suppose further that $t_1,\ldots,t_n$ are smoothing parameters for the nodes $y_1,\ldots,y_n$ of $Y$, and for $i=1,\ldots,n$, let $t_{i,1},\ldots_{i,r_i}$ be smoothing parameters for the corresponding nodes of $X$ above $y_i$, with ramification indices $a_{i,1},\ldots,a_{i,r_i}$. Then, the complete local ring of $\cH_{g/h,d}$ at $[f]$ is
\begin{equation*}
\mathbb{C}\left[\left[t_1,\ldots,t_{3h-3+b},\{t_{i,j}\}^{1\le i\le n}_{1\le j\le r_i}\right]\right]/\left(t_1=t_{1,1}^{a_{1,1}}=\cdots=t_{1,r_1}^{a_{1,r_1}},\ldots,t_n=t_{n,1}^{a_{n,1}}=\cdots=t_{n,r_n}^{a_{n,r_n}}\right).
\end{equation*} 

In particular, $\cH_{g/h,d}$ is Cohen-Macaulay, but singular at any cover with at least one nodal fiber with more than one ramification point.

\subsubsection{$\psi$ classes}

\begin{defn}
For any marked point $x_i$ on a source curve parametrized by $\barH_{g/h,d}$, we define the corresponding $\psi$ class $\psi_{i}\in A^{1}(\barH_{g/h,d})$ simply by the pullback of the corresponding $\psi$ class from $\barM_{g,N}$.
\end{defn}

In fact, the $\psi$ classes of points living in the same fiber are all constant multiples of each other; more precisely, the $\psi$ class at $x\in X$ is equal to the pullback of the $\psi$ class at $f(x)\in Y$ by $\delta$, divided by the ramification index at $x$, cf. \cite[Lemma 3.9]{svz}

\subsubsection{Additional marked points}

We will also need the following variant: 

\begin{defn}
For any $m\ge0$, let $\barH_{g/h,d,m}$ be the space of admissible covers where we mark $m$ points on the target curve $Y$ in addition to the branch points (and still require that the resulting curve be stable), along with their $md$ unramified pre-images.
\end{defn}


\subsubsection{Hurwitz cycles}

The cohomology classes we will be interested in come from pushing forward the fundamental class of $\barH_{g/h,d,m}$ by the source map to $\barM_{g,N}$, then forgetting marked points (and stabilizing) to get a class on $\barM_{g,n}$, for $n\le N$. We will refer to such classes collectively as \textit{Hurwitz cycles}. More precisely, we have the following. 

\begin{defn}
Suppose that $m=m_2+m_d$ for $m_2,m_d\ge0$ and let $n$ be an integer satisfying $0\le n\le b=(2g-2)-d(2h-2)$. Let $\phi':\barH_{g/h,d,m}\to\barM_{g,2m_2+dm_d+n}$ be the map obtained by post-composing the usual source map $\phi$ with the map forgetting all marked points except:
\begin{itemize}
\item 2 points in each of $m_2$ of the marked unramified fibers,
\item al $d$l points in the other $m_d$ marked fibers, and
\item $n$ simple ramification points.
\end{itemize}

We then (abusing notation) define the \textit{Hurwitz cycle} $\barH_{g/h,d,(m_2)^2(m_d)^d,n}$ in $A^{*}(\barM_{g,2m_2+dm_d+n})$ or $H^{*}(\barM_{g,2m_2+dm_d+n})$ by the pushforward of the fundamental class of $\barH_{g/h,d,m,n}$ by $\phi'$.
\end{defn}

Similarly, we may define the open cycles $\cH_{g/h,d,(m_2)^2(m_d)^d,n}$ by pullback of the Hurwitz cycles, as defined above, to $\cM_{g,2m_2+dm_d+n}$. (Note that the source maps $\phi:\cH_{g/h,d}\to\cM_{g,N}$ are in general not proper, so one cannot take this pushforward directly.)

Strictly speaking, one gets different cycles from different choices of points to forget, but these cycles are related by automorphisms permuting the labels of the marked points. In particular, the property of being tautological is agnostic to these choices, so we suppress them. When any of $m_2,m_d,n$ are equal to zero, we may also suppress them from the notation when there is no risk of confusion.

We will refer in the rest of this section, for notational convenience, to spaces $\barH_{g/h,d}$ of admissible covers, but the discussion carries over immediately to the setting where additional marked points are added, or more generally where source curves are allowed to be disconnected and/or with higher ramification.

\subsubsection{Boundary strata}

We now describe boundary strata on $\barH_{g/h,d}$. Suppose that $\Gamma,\Gamma'$ are stable graphs parametrizing boundary strata on $\barM_{g,N},\barM_{h,b}$, respectively.

\begin{defn}
An \textit{admissible cover of stable graphs} $\gamma:\Gamma\to\Gamma'$ of degree $d$ by a collection of maps on vertices, half-edges, and legs, respectively:
\begin{align*}
\gamma_V:V(\Gamma)&\to V(\Gamma')\\
\gamma_H:H(\Gamma)&\to H(\Gamma')\\
\gamma_L:L(\Gamma)&\to L(\Gamma')\\
\end{align*}
compatible (in the obvious sense) with all of the attachment data, in addition to the data of a degree $d_v$ at each $v\in V(\Gamma)$, and a (common) ramification index $d_e$ at each $e\in E(\Gamma)$, such that:
\begin{itemize}
\item if $v\in V(\Gamma)$ and $h'\in H(\Gamma')$ is a half-edge attached to $\gamma_V(v)$, then the sum of the ramification indices at the half-edges attached to $v$ living over $h'$ is equal to $d_v$, and
\item if $v'\in V(\Gamma')$, then the sum of the degrees at vertices living over $v'$ is equal to $d$.
\end{itemize}
\end{defn}

\begin{rem}
The notion of an admissible cover of stable graphs is different from the notion of an \textit{$A$-structure} $A\to\Gamma$ on a stable graph, see \cite[\S A.2]{gp} or \cite[Definition 2.5]{svz} which captures the phenomenon of stable curve with dual graph $A$ degenerating to one of dual graph $\Gamma'$. Either notation could sensibly be referred to as a morphism of stable graphs; we avoid doing so as not to cause confusion.
\end{rem}

Let $\gamma:\Gamma\to\Gamma'$ be a degree $d$ admissible cover of stable graphs. Then, for each $v'\in V(\Gamma')$, let $\barH_{v'}$ be the moduli space of admissible covers of the topological type given by the pre-image of $v'$ in $V(\Gamma)$, along with the data of the attached half-edges and legs. Note that such covers will in general have disconnected targets and arbitrary ramification, but the discussion above applies in this more general setting. We then get a boundary stratum
\begin{equation*}
\xi_{(\Gamma,\Gamma')}:\barH_{(\Gamma,\Gamma')}\to\barH_{g/h,d}
\end{equation*}
by gluing the constituent admissible covers over each component of $Y$ according to the data of $\Gamma$ and $\Gamma'$ (we have suppressed the map $\gamma$ from the notation).

It is clear that the maps $\xi_{(\Gamma,\Gamma')}$ are quasi-finite and that their images cover the boundary of $\barH_{g/h,d}$. The codimension of a boundary stratum is equal to the number of edges of $\Gamma'$, and their specializations to one another can be described in terms of the combinatorics of the admissible covers of graphs (we will not need such an explicit description).

\subsubsection{Separating nodes}

We record here the following straightforward lemma.

\begin{lem}\label{sep_node}
Let $f:X\to Y$ be an admissible cover, and suppose that $x\in X$ is a separating node. Then, $f(x)\in Y$ is a separating node.
\end{lem}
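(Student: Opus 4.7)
The plan is to argue by contradiction: suppose $x\in X$ is separating but $y:=f(x)\in Y$ is non-separating. Let $X_1, X_2$ be the two components of the partial normalization of $X$ at $x$, with $x^+\in X_1$ and $x^-\in X_2$, and let $\wt{Y}$ denote the partial normalization of $Y$ at $y$, with pre-images $y^+, y^-$. Write $f^{-1}(y)=\{x=x_1, x_2, \ldots, x_k\}$ with matching ramification indices $a_1, \ldots, a_k$. The admissible cover condition at $x$ matches the two branches of $X$ at $x$ with those of $Y$ at $y$, so we may label $x^+\mapsto y^+$ and $x^-\mapsto y^-$. The non-separating hypothesis means $\wt{Y}$ is connected; let $\wt{Y}_a, \wt{Y}_b$ denote the irreducible components of $\wt{Y}$ containing $y^+, y^-$ respectively. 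Observe also that for $i\geq 2$, the node $x_i$ lies entirely in one of $X_1, X_2$, since it is a node of $X$ other than the bridge $x$.

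The heart of the argument is to compare the degrees of the restriction $X_1\to Y$ over $\wt{Y}_a$ versus $\wt{Y}_b$. After further partially normalizing $X_1$ at its nodes lying over $y$, the pre-images of $y^+$ become $\{x^+\}\cup\{x_i^+ : x_i\in X_1,\, i\geq 2\}$, while the pre-images of $y^-$ become $\{x_i^- : x_i\in X_1,\, i\geq 2\}$, since $x^-\in X_2$. Summing ramification indices, the respective degrees $D_1^a, D_1^b$ of the restriction differ by exactly $a_1\geq 1$.

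To reach the contradiction, use the connectedness of $\wt{Y}$ to choose a chain of irreducible components $\wt{Y}_a=\wt{Y}_{c_0}, \wt{Y}_{c_1}, \ldots, \wt{Y}_{c_s}=\wt{Y}_b$ joined in $\wt{Y}$ by nodes $y'_1, \ldots, y'_s$, each of which is also a node of $Y$ distinct from $y$. At each such intermediate $y'_j$, every pre-image node $x'_l\in X_1$ has both branches in $X_1$ (as the bridge $x$ is the only node of $X$ joining $X_1$ to $X_2$), and the admissible cover condition supplies matching ramification indices on its two branches. This forces $D_1^{c_j}=D_1^{c_{j+1}}$ for each $j$, hence $D_1^a=D_1^b$, contradicting the imbalance computed above. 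The main subtlety lies in carrying out this chain argument in the reducible case; when $\wt{Y}$ is irreducible (so $\wt{Y}_a=\wt{Y}_b$) the equality $D_1^a=D_1^b$ follows immediately from constancy of the generic degree of a finite map onto an irreducible target.
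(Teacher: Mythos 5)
The paper records this lemma without proof (it is introduced only as a ``straightforward lemma''), so there is no argument of the paper's to compare against; your job is therefore to supply a complete proof, and your proposal does so correctly. The key points are all in place: the Harris--Mumford local model at a node identifies each branch of $X$ at a node over $y$ with a definite branch of $Y$ at $y$, with equal ramification index on the two branches, so the function assigning to each irreducible component $W$ of $Y$ the total degree of $X_1$ over $W$ can be computed as a fiber count over either branch at any node of $W$; this makes the function constant along any chain of components of $\wt{Y}$ (the partial normalization at $y$), while the two computations at the branches $y^{+},y^{-}$ differ by $a_1>0$ because $x^{+}$ and $x^{-}$ lie on opposite sides of the separating node. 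Connectedness of $\wt{Y}$ then gives the contradiction, and you correctly note that the case $\wt{Y}_a=\wt{Y}_b$ (a self-node) is the degenerate instance where the contradiction is immediate. One implicit ingredient worth stating is that admissibility forces $f^{-1}(y)\subset X_{\mathrm{sing}}$, so the fibers over $y^{\pm}$ really are exhausted by the branches $x_i^{\pm}$ and no smooth preimages are missed; with that remark the argument is complete.
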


\subsection{Admissible Galois covers}\label{galois_section}

As we have already noted, $\barH_{g/h,d}$ is in general singular at the boundary. This will pose only minor problems for our purposes; in some instances, however, we will need to pass, at least implicitly, to its normalization.

Let $G$ be a finite group. Let $\barH_{g,G,\xi}$ be the moduli space of \textit{admissible $G$-covers} $f:X\to Y$ with monodromy $\xi$, where $X$ is a stable curve of genus $g$ with a generically free $G$-action, and $f$ identifies $Y$ with the scheme-theoretic quotient $X/G$. (See \cite[\S 3]{svz} for detailed definitions.) Recall that we also have source and target maps
\begin{equation*}
\xymatrix{
\barH_{g,G,\xi}\ar[r]^{\phi} \ar[d]^{\delta} & \barM_{g,N} \\
\barM_{h,b}
}
\end{equation*}
where $h$ is the genus of $X/G$.

As in the Harris-Mumford setting, $\delta$ is quasi-finite, and is ramified at $G$-covers with ramification at nodes. However, $\barH_{g,G,\xi}$ is \textit{smooth} of dimension $3h-3+b$, and the map $\phi$ is in addition finite and unramified, see \cite[Theorem 3.7]{svz}.

As in the Harris-Mumford setting, we may define $\psi$ classes on $\barH_{g,G,\xi}$ by pullback by $\phi$ (or, with a correction factor, by $\delta$); here, any two $\psi$ classes at marked points in the same $G$-orbit are equal.

\subsubsection{Boundary strata}

Boundary strata $\xi_{(\Gamma,G)}:\barH_{(\Gamma,G)}\to \barH_{g,G,\xi}$ on $\barH_{g,G,\xi}$ are indexed by \textit{admissible $G$-graphs} $(\Gamma,G)$, see \cite[\S 3.4]{svz}. The space $\barH_{(\Gamma,G)}$ is a product, indexed by the vertices $v$ of the quotient graph $\Gamma/G$, of moduli spaces of admissible $G_v$-covers, where $G_v\subset G$ is the stabilizer of any lift of $v$ to $\Gamma$. However, it will later be convenient to regard these factors equivalently as spaces of disconnected admissible $G$-covers whose components indexed by left cosets of $G_v$ in $G$.

\subsubsection{Normalizing of the Harris-Mumford space}\label{hm_normalization_section} We now explain how $\barH_{g/h,d}$ may be normalized via moduli of admissible Galois covers, see also \cite[\S 1.3, \S 6.1]{lian_htaut}. Let $f:X\to Y$ be a degree $d$ cover of smooth curves. and let $f_0:X_0\to Y_0$ be the \'{e}tale locus. Define
\begin{equation*}
\wt{f}_0:\wt{X}_0:=(X_0\times_{Y_0}\cdots\times_{Y_0} X_0)-\Delta\to Y_0
\end{equation*}
given by taking the $d$-fold product over $X_0$ and removing all diagonals, and define $\wt{f}:\wt{X}_0\to Y$ to be the unique extension of $\wt{f}_0$ to a map of smooth and proper curves. Then, $\wt{f}$ is an $S_d$-Galois cover of smooth curves, and the data of $f$ can be recovered from a $S_d$-cover $\wt{X}\to Y$ by defining $X=\wt{X}/S_{d-1}$. Note, however, that $\wt{X}$ may not be connected.

If, on the other hand, $f$ is an admissible cover, this construction in general does not yield a map of stable curves. It may instead be carried out over the components of $Y$ separately, but there will in general be multiple ways to glue together the resulting maps to form an admissible $S_d$-cover with the property that $X=\wt{X}/S_{d-1}$.

In any case, we obtain a map $\nu:\wt{H}_{g/h,d}:=\barH_{\wt{g},S_d,\xi}\to\barH_{g/h,d}$, for appropriately chosen $\wt{g},\xi$ (note here that $\wt{g}$ will be a vector of integers, corresponding to the fact that the curves $\wt{X}$ may be disconnected), defined by
\begin{equation*}
\nu([\wt{f}:\wt{X}\to Y])=[f:X\to Y].
\end{equation*}

Then, $\nu$ is a normalization: indeed, one may identify $\barH_{\wt{g},S_d,\xi}$ with appropriate components of the Abramovich-Corti-Vistoli space of \textit{twisted $G$-covers}, see \cite[Remark 3.6]{svz}, which normalizes the Harris-Mumford space, see \cite[Proposition 4.2.2]{acv}.

\section{Intersections of Hurwitz cycles with boundary strata}\label{hurwitz_int}

\subsection{The Galois case}\label{galois_intersection}

We first recall the main result of Schmitt-van Zelm concerning the intersection of \textit{Galois} Hurwitz loci with boundary strata on $\barM_{g,N}$. We consider the pullback of 
\begin{equation*}
\phi:\barH_{g,G,\xi}\to\barM_{g,N}
\end{equation*}
by the boundary class $\xi_A:\barM_{A}\to\barM_{g,N}$. 

We have a Cartesian diagram \cite[Proposition 4.3]{svz}
\begin{equation*}
\xymatrix{
\displaystyle\coprod\barH_{(\Gamma,G)} \ar[d]_{\coprod \phi_{\alpha}} \ar[r]^{\xi_{(\Gamma,G)}} & \barH_{g,G,\xi} \ar[d]^{\phi}\\
\barM_{A} \ar[r]^{\xi_{A}} & \barM_{g,n}
}
\end{equation*}
where the coproduct is over admissible $G$-graphs $(\Gamma,G)$ equipped with an $A$-structure $\alpha:\Gamma\to A$ satisfying the genericity condition that the induced map 
\begin{equation*}
\alpha_E:E(A)\to E(\Gamma)/G
\end{equation*}
from edges of $A$ to \textit{$G$-orbits of} edges of $\Gamma$ is surjective. The $A$-structures $\alpha:\Gamma\to A$ then naturally induce the maps $\phi_{\alpha}$ on the left.

The normal bundle of $\barM_A$ in $\barM_{g,r}$ is the direct sum of line bundle contributions from the edges of $A$, and the normal bundle of $\barH_{(\Gamma,G)}$ in $\barH_{g,G,\xi}$ is the direct sum of line bundle contributions of $G$-orbits of edges of $\Gamma$. By the excess intersection formula, we conclude:
\begin{thm}\cite[Theorem 4.9]{svz}\label{svz_thm}
With notation as above, we have
\begin{equation*} 
\xi^{*}_A(\phi_{*}([\barH_{g,G,\xi}]))=\sum_{(\Gamma,G)}\phi_{\alpha*}\left(\prod_{(\ell,\ell')}(-\psi_{\ell}-\psi_{\ell'})\right),
\end{equation*}
where $(\ell,\ell')$ is a pair of half-edges comprising an edge, and we range over edges of in the image of $E(A)\to E(\Gamma)$, excluding a choice of contributions from $G$-orbit representatives of $E(\Gamma)$.
\end{thm}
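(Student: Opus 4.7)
The plan is to apply Fulton's excess intersection formula to the Cartesian diagram above. Since the source map $\phi$ is proper, compatibility of refined Gysin pullback with proper pushforward gives
\begin{equation*}
\xi_A^*(\phi_*[\barH_{g,G,\xi}]) = \sum_{(\Gamma,G,\alpha)} \phi_{\alpha*}\bigl(\xi_A^![\barH_{g,G,\xi}]|_{\barH_{(\Gamma,G)}}\bigr),
\end{equation*}
where the sum ranges over admissible $G$-graphs with $A$-structure $\alpha$ satisfying the surjectivity condition. On each component, both $\barH_{g,G,\xi}$ and $\barH_{(\Gamma,G)}$ are smooth (by \cite[Theorem 3.7]{svz} and the analogous statement for boundary strata), and $\barM_A \hookrightarrow \barM_{g,N}$ is a regular embedding; hence the Gysin class on $\barH_{(\Gamma,G)}$ is the top Chern class of the excess normal bundle
\begin{equation*}
E_\alpha = \phi_\alpha^*N_{\barM_A/\barM_{g,N}} \big/ N_{\barH_{(\Gamma,G)}/\barH_{g,G,\xi}}.
\end{equation*}

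I would then identify the edge-wise splittings of both normal bundles. The normal bundle of $\barM_A$ in $\barM_{g,N}$ decomposes as $\bigoplus_{e \in E(A)} L_e$, with $c_1(L_e) = -\psi_\ell - \psi_{\ell'}$ for $e = (\ell,\ell')$, while the normal bundle of $\barH_{(\Gamma,G)}$ in $\barH_{g,G,\xi}$ decomposes as $\bigoplus_{[e'] \in E(\Gamma)/G} M_{[e']}$, with $c_1(M_{[e']}) = -\psi_\lambda - \psi_{\lambda'}$ at any orbit representative $(\lambda, \lambda')$ (well-defined since $\psi$-classes on $\barH_{g,G,\xi}$ are $G$-orbit invariants). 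The inclusion of normal bundles respects these splittings: by surjectivity of $\alpha_E : E(A) \to E(\Gamma)/G$, each $M_{[e']}$ embeds into $\phi_\alpha^*L_e$ for any edge $e$ of $A$ mapping to $[e']$. Choosing one such distinguished preimage per $G$-orbit realizes $N_{\barH_{(\Gamma,G)}/\barH_{g,G,\xi}}$ as a direct summand of $\phi_\alpha^*N_{\barM_A/\barM_{g,N}}$, so $E_\alpha$ is the direct sum of the remaining (non-distinguished) $L_e$. Taking the top Chern class, pushing forward by $\phi_{\alpha*}$, and summing over $(\Gamma,G,\alpha)$ yields the claimed formula.

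The main subtlety is the identification $M_{[e']} \hookrightarrow \phi_\alpha^*L_e$ as an inclusion of line bundle summands at nodes with ramification index $d_{e'} > 1$. At such a node the smoothing parameter $t_e$ on $\barM_A$ pulls back to the $d_{e'}$-th power of a smoothing parameter on $\barH_{g,G,\xi}$, but the line bundles $L_e$ and $M_{[e']}$ are intrinsically tensor products of tangent lines and have equal first Chern classes on $\barH_{(\Gamma,G)}$. Smoothness of $\barH_{g,G,\xi}$ together with the unramifiedness of $\phi$ ensure the inclusion of normal bundles is compatible with the direct sum decomposition, reducing the question to a local analysis of conormal sheaves at one node that makes the distinguished summand match cleanly.
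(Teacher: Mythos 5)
Your proposal is correct and follows essentially the same route the paper takes: the paper likewise invokes the Cartesian diagram of \cite[Proposition 4.3]{svz}, the edge-wise splittings of the two normal bundles (edges of $A$ on one side, $G$-orbits of edges of $\Gamma$ on the other), and the excess intersection formula, merely citing \cite[Theorem 4.9]{svz} for the details you have filled in. Your closing remark on the ramification-index subtlety is exactly the point that smoothness of $\barH_{g,G,\xi}$ and unramifiedness of $\phi$ are needed for, and the equality of $\psi$-classes along a $G$-orbit makes the distinguished summand identification work as you describe.
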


More generally, if $G_1\subset G$ is any subgroup, one can compute the pullback of the \textit{restriction map} $\res_{G_1}^{G}:\barH_{g,G,\xi}\to\barH_{g,G_1,\xi'}$ by any boundary class $\xi_{(A,G_1)}:\barH_{(A,G_1)}\to\barH_{g,G_1,\xi'}$, see \cite[Proposition 4.13]{lian_htaut}.

\subsection{The Harris-Mumford case}\label{hm_boundary_int}

Now, we consider the analogous question on the Harris-Mumford space $\barH_{g/h,d}$ (or any of its variants). Let $\xi_A:\barM_{A}\to\barM_{g,N}$ be a boundary class as before. 

\begin{prop}\label{hm_cartesian_sets}
We have a commutative diagram
\begin{equation*}
\xymatrix{
\coprod \barH_{(\Gamma,\Gamma')} \ar[r] \ar[d]_{\coprod \phi_{(\Gamma,\Gamma')}} & \barH_{g/h,d} \ar[d]^{\phi} \\
\barM_{A} \ar[r]^{\xi_{A}} & \barM_{g,N}
}
\end{equation*}
where the disjoint union is over boundary strata along with an $A$-structure on $\Gamma$, with the genericity condition that the composite map
\begin{equation*}
E(A)\subset E(\Gamma)\to E(\Gamma')
\end{equation*}
is surjective.

Furthermore, the diagram is Cartesian \textbf{on the level of closed points}.
\end{prop}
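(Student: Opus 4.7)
The plan is to verify commutativity and then establish the Cartesian property on closed points by exhibiting an explicit inverse bijection.

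Commutativity will be tautological by construction: $\phi_{(\Gamma,\Gamma')}$ factors as the source map $\barH_{(\Gamma,\Gamma')}\to\barM_\Gamma$ followed by the regrouping $\barM_\Gamma\to\barM_A$ induced by the $A$-structure $\alpha$. Applying $\xi_A$ re-glues the regrouped pieces to recover the original source curve, matching $\phi$ composed with the boundary inclusion $\xi_{(\Gamma,\Gamma')}$.

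For the Cartesian property on closed points, the forward direction (coproduct to fiber product) is just commutativity. For the reverse, I would define an extraction: a closed point $([f:X\to Y],\,(X'_v)\in\barM_A)$ with $\xi_A((X'_v))=X$ uniquely determines $\Gamma$ (the dual graph of $X$), $\Gamma'$ (the dual graph of $Y$, with the admissible cover of stable graphs $\gamma:\Gamma\to\Gamma'$ induced by $f$), and $\alpha$ (the $A$-structure on $\Gamma$ whose distinguished edges $E(A)\subset E(\Gamma)$ are exactly the nodes of $X$ appearing as gluings between distinct factors $X'_v$, or as self-gluings on a single factor in the case of loops of $A$). The associated closed point of $\barH_{(\Gamma,\Gamma')}$ is then just $[f]$ itself, and all the combinatorial data is read off uniquely from the fiber product point.

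The hard part will be verifying the genericity condition: the composite $E(A)\to E(\Gamma')$ must be surjective for the extracted data. I would argue this by observing that the decomposition $(X'_v)$ of $X$ induces a compatible decomposition of the target $Y$ via the admissible cover map $f$, so that every node of $Y$ is realized by at least one $A$-gluing node of $X$. This is the Harris-Mumford analog of the Galois genericity $E(A)\to E(\Gamma)/G$ appearing in Theorem \ref{svz_thm}, and may alternatively be deduced by reducing to the Galois case through the $S_d$-normalization $\nu:\wt{H}_{g/h,d}\to\barH_{g/h,d}$ of \S\ref{hm_normalization_section}, where the analogous Cartesian diagram is Proposition 4.3 of \cite{svz}. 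Once the genericity check is in hand, the two extraction maps are mutually inverse by construction, completing the proof.
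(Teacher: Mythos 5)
Your overall strategy (commutativity is formal; build an explicit inverse on closed points) is the same as the paper's, but there is a genuine error in the key combinatorial step: you take $\Gamma$ to be the full dual graph of $X$ and $\Gamma'$ the full dual graph of $Y$. With that choice the genericity condition $E(A)\to E(\Gamma')$ surjective is simply false in general: the target $Y$ of a point in the fiber product can have nodes none of whose preimages in $X$ are selected by the $A$-structure (e.g.\ $A$ has one edge but $[f]$ lies in a deeper boundary stratum of $\barH_{g/h,d}$), and your claimed justification --- that the decomposition $(X'_v)$ forces every node of $Y$ to be realized by an $A$-gluing node --- does not hold. So your extraction map does not land in the coproduct as defined. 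The paper's proof instead defines $E(\Gamma')$ to be only those nodes of $Y$ that are \emph{images} of the $A$-nodes of $X$, and $E(\Gamma)$ to be only the nodes of $X$ lying over $E(\Gamma')$; the remaining nodes of $X$ and $Y$ are absorbed into the (compactified) constituent spaces $\barH_{v'}$ making up $\barH_{(\Gamma,\Gamma')}$. With that definition the genericity condition is satisfied by construction, and it is exactly what guarantees each closed point of the fiber product lies in precisely one stratum of the coproduct.

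Your fallback suggestion of reducing to the Galois case via the $S_d$-normalization does not repair this: the issue is not whether some Cartesian diagram exists upstairs, but the correct indexing of the strata $(\Gamma,\Gamma')$ downstairs, and that requires the ``minimal'' graphs described above rather than the full dual graphs. Once you replace your definition of $(\Gamma,\Gamma')$ with the paper's, the rest of your argument (reading off the admissible cover of stable graphs from the topology of $f$, and checking the two maps are mutually inverse) goes through as you describe.
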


\begin{proof}
The commutativity is clear. We construct the inverse map
\begin{equation*}
\barH_{g/h,d} \times_{\barM_{g,N}} \barM_{A}(\Spec(\bC)) \to \coprod \barH_{(\Gamma,\Gamma')}(\Spec(\bC)) 
\end{equation*}

Let $[f:X\to Y]$ be a point of $\barH_{g/h,d}$ with an $A$-structure on the dual graph of $X$. Then, we get a natural stable graph $\Gamma'$ as follows. Let $E(\Gamma')$ be the set of nodes to which the nodes of $X$ corresponding to the edges of $A$ map. Let $V(\Gamma')$ be the set of connected components of the curve obtained by deleting the nodes of $E(\Gamma')$ from $Y$, and let $L(\Gamma')$ be the set of marked points of $Y$; together these define a natural stable graph $\Gamma'$ and a $\Gamma'$-structure on the dual graph of $Y$.

Now, let $E(\Gamma)$ be the set of nodes of $X$ living over $E(\Gamma')$, $V(\Gamma)$ be the set of components of the curve obtained by deleting these nodes from $X$, and $L(\Gamma)$ be the set of marked points of $X$. As before, we get a stable graph $\Gamma$, along with a natural $\Gamma$-structure on the dual graph of $X$ and an $A$-structure on $\Gamma$. The topology of $f$ also induces an admissible cover $\gamma:\Gamma\to\Gamma'$. The genericity condition above is visibly satisfied, and we obtain from $f$ a point of $\coprod \barH_{(\Gamma,\Gamma')}$; it is straightforward to check that we get the desired inverse.
\end{proof}

In general, the diagram in Proposition \ref{hm_cartesian_sets} will fail to be a functorial fiber diagram on the level of stacks owing to non-reduced structure on the intersection of $\phi$ and $\xi_A$. To see this, we compute the local picture.

Let $t_1,\ldots,t_k$ be deformation parameters corresponding to the edges of $\Gamma'$ (which in turn correspond to smoothing parameters of nodes of $Y$), and let $t_{i,j}$ be deformation parameters for the edges of $\Gamma$ living above $t_i$ (which in turn correspond to smoothing parameters of nodes of $X$). Let $t_{k+1},\ldots,t_{3h-3+b}$ be deformation parameters for $Y$ away from the chosen nodes. Then, recall from \S\ref{hm_section} that the complete local ring of $\barH_{g/h,d}$ at $[f]$ may be written as
\begin{equation*}
\bC[\{t_i\},\{t_{ij}\}]/(t_i=t_{ij}^{a_{ij}})\otimes S,
\end{equation*}
where $a_{ij}$ are the associated ramification indices and $S$ is the complete local ring of $\coprod \barH_{(\Gamma,\Gamma')}$ at the point obtained by deleting all of the nodes of $X$ and $Y$ corresponding to the edges of $\Gamma$ and $\Gamma'$.

The effect of pulling back by $\xi_{A}$, on the level of complete local rings, kills all smoothing parameters corresponding to the nodes of $A$. In particular, all of the variables $t_i$ are killed, and we are left with the in ring
\begin{equation*}
R_{(\Gamma,\Gamma')}:=\bC[\{t_{ij}\}]/(t_{ij}^{a_{ij}})\otimes S,
\end{equation*}
where we now range over all $(i,j)$ not corresponding to an edge of $A$.

In general, the complete local ring $\Spec(R_{(\Gamma,\Gamma')})$ is non-reduced, in which case the functorial intersection of $\phi$ and $\xi_A$ is non-reduced with underlying reduced space $\coprod \barH_{(\Gamma,\Gamma')}$. Each boundary stratum $\barH_{(\Gamma,\Gamma')}$ contributes separately to the class $\xi_A^{*}\phi_{*}([\barH_{g/h,d}])$; we now explain how to compute this contribution. 

\begin{prop}\label{harris_mumford_contributions}
With notation as above, consider the contribution of $\barH_{(\Gamma,\Gamma')}$ to the class $\xi_A^{*}\phi_{*}([\barH_{g/h,d}])$
\begin{enumerate}
\item[(a)] Suppose that $\barH_{(\Gamma,\Gamma')}$ has the expected dimension. Then, its contribution to $\xi_A^{*}\phi_{*}([\barH_{g/h,d}])$ is a non-zero multiple of $\phi_{*}([\barH_{(\Gamma,\Gamma')}])$ on the boundary stratum $\barM_A$,
\item[(b)] If $\barH_{(\Gamma,\Gamma')}$ is arbitrary, its contribution to $\xi_A^{*}\phi_{*}([\barH_{g/h,d}])$ is the pushforward by $\phi_{(\Gamma,\Gamma')}$ by a polynomial in $\psi$ classes on $\barH_{(\Gamma,\Gamma')}$ at half-edges of $\Gamma$ (capped against the fundamental class of $\barH_{(\Gamma,\Gamma')}$)
\end{enumerate}
\end{prop}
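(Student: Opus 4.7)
The plan is to analyze each stratum $\barH_{(\Gamma,\Gamma')}$ separately using the complete local ring description developed just before the statement, and to assemble the contributions via the refined Gysin pullback $\xi_A^{!}$, which is well-defined because $\xi_A$ is a regular embedding into the smooth Deligne-Mumford stack $\barM_{g,N}$. By Proposition \ref{hm_cartesian_sets} the set-theoretic fiber product is the disjoint union of the $\barH_{(\Gamma,\Gamma')}$; at a closed point of such a stratum, pulling back by $\xi_A$ kills the parameters $t_{ij}$ for $(i,j) \in E(A)$, and via the relations $t_i = t_{ij}^{a_{ij}}$ together with the genericity condition this forces $t_i = 0$ for every $i$, leaving the Artinian factor $\bC[\{t_{ij}\}]/(t_{ij}^{a_{ij}})$ indexed by edges of $\Gamma$ not lying over $E(A)$.

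For part (a), if $\barH_{(\Gamma,\Gamma')}$ has the expected dimension, this stratum's scheme-theoretic contribution is supported on $\barH_{(\Gamma,\Gamma')}$ with generic length equal to $\prod_{(i,j) \notin E(A)} a_{ij}$, so the contribution to $\xi_A^{*}\phi_{*}([\barH_{g/h,d}])$ is $\phi_{(\Gamma,\Gamma')*}$ of this positive integer times the fundamental class, as desired.

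For part (b), in the general excess-dimension case I would apply the excess intersection formula. The normal bundle of $\barM_A$ in $\barM_{g,N}$ decomposes as $\bigoplus_{e \in E(A)} L_{\ell}^{-1} \otimes L_{\ell'}^{-1}$ summed over pairs of half-edges, and the analogous decomposition holds for the normal bundle of $\barH_{(\Gamma,\Gamma')}$ in $\barH_{g/h,d}$, with additional twists by ramification indices coming from the relations $t_i = t_{ij}^{a_{ij}}$. The excess bundle on $\barH_{(\Gamma,\Gamma')}$ then decomposes edge-by-edge into line bundles whose first Chern classes are of the form $-\psi_{\ell} - \psi_{\ell'}$, up to ramification factors; its Euler class is a polynomial in $\psi$ classes at half-edges of $\Gamma$, and capping against the fundamental class of $\barH_{(\Gamma,\Gamma')}$ yields the stated formula.

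The principal obstacle is the non-reducedness of the fiber product together with the fact that $\phi$ fails to be an immersion on normal bundles when ramification indices exceed one, which prevents a direct application of standard excess intersection on smooth schemes. A cleaner derivation, essentially that of \cite[\S 6]{lian_htaut}, proceeds by lifting the calculation to the normalization $\nu:\wt{H}_{g/h,d}\to\barH_{g/h,d}$ via admissible $S_d$-covers, whose source is smooth, applying Theorem \ref{svz_thm} or its extension to the restriction map $\res_{S_{d-1}}^{S_d}$, and then pushing forward by $\nu$; the Galois combinatorics of edge orbits then convert directly into the $\psi$ class polynomial on $\barH_{(\Gamma,\Gamma')}$ promised by the statement.
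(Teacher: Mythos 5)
Your proposal is correct and follows essentially the same route as the paper: part (a) via the complete local ring computation, where the contribution is the reduced stratum with multiplicity the length of the Artinian factor $\bC[\{t_{ij}\}]/(t_{ij}^{a_{ij}})$, and part (b) by passing to the normalization by admissible $S_d$-covers and applying the Schmitt--van Zelm excess intersection formula (via the restriction map $\res^{S_d}_{S_{d-1}}$ and the target map), then identifying the resulting $\psi$ classes with those on $\barH_{(\Gamma,\Gamma')}$ through the normalization. Your explicit identification of the obstacle (singularity of the Harris--Mumford space and failure of $\phi$ to be unramified at covers ramified over nodes) matches the paper's motivation for the detour through $\wt{H}_{g/h,d}$.
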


\begin{proof}
If $\barH_{(\Gamma,\Gamma')}$ has the expected dimension, then its contribution to the intersection is the fundamental class of a union of components with underlying reduced $\barH_{(\Gamma,\Gamma')}$ and multiplicity equal to the length of $R_{(\Gamma,\Gamma')}$, by the above discussion. This gives part (a).

For part (b), we apply the excess intersection formula. Morally, the situation is analogous to the Galois case, but because the spaces in question are singular, we will need to pass to their normalizations as in \S\ref{hm_normalization_section}.

Recall that the composite map $\wt{\phi}:\wt{H}_{g/h,d}\to\barM_{g,d}$ may be factored as the composition of the restriction map $\res^{S_d}_{S_{d-1}}:\barH_{\wt{g},S_d,\xi}\to \barH_{\wt{g},S_{d-1},\xi'}$ and the target map $\delta:\barH_{\wt{g},S_{d-1},\xi'}\to\barM_{g,N}$. 

Consider the pullback of $\xi_{A}$ first by $\delta$. By \cite[\S 4.3.2]{lian_htaut}, the result is a disjoint union of boundary strata on $\barH_{\wt{g},S_{d-1},\xi'}$, all of the expected dimension (equal to that of $\barM_A$), each appearing with multiplicity given in terms of the ramification indices appearing, see \cite[Lemma 4.15]{lian_htaut}. We may then pull back the underlying reduced spaces (the boundary strata themselves) by the restriction map, to obtain a disjoint union of boundary strata $\barH_{(\wt{\Gamma},S_d)}$ on $\barH_{\wt{g},S_d,\xi}$, as in \cite[Lemma 4.11]{lian_htaut}.

By \cite[Proposition 4.13]{lian_htaut} (that is, the analogue of Theorem \ref{svz_thm} in the H-tautological setting), the class $\xi_A^{*}\phi_{*}([\barH_{g/h,d}])$ is then computed in terms of $\psi$ classes on $\barH_{(\wt{\Gamma},S_d)}$ associated to the half-edge (orbits) of $\wt{\Gamma}$, after re-introducing the correction factors of the multiplicities of the boundary classes on $\barH_{\wt{g},S_{d-1},\xi'}$.

On the other hand, the union of the $\barH_{(\wt{\Gamma},S_d)}$ is the underlying reduced space of the pullback of $\xi_A$ by $\wt{\phi}$, so admits a natural map (compatible with the maps to $\barM_A$) 
\begin{equation*}
\nu_{(\Gamma,\Gamma')}:\coprod \barH_{(\wt{\Gamma},S_d)}\to \coprod \barH_{(\Gamma,\Gamma')}.
\end{equation*}
In fact, one can easily make this map explicit: we have
\begin{equation*}
(\Gamma,\Gamma')=(\wt{\Gamma}/S_{d-1},\wt{\Gamma}/S_{d}),
\end{equation*}
the admissible cover $\gamma:\Gamma\to\Gamma':$ is the natural quotient map, and $\nu$ sends $\wt{X}'\to Y'$ to $\wt{X}'/S_{d-1}\to Y'$ over each component $Y'\subset Y$. In particular, above each $\barH_{(\Gamma,\Gamma')}$, the map $\nu_{(\Gamma,\Gamma')}$ is a union of copies of the normalizations of the constituent spaces. These copies are indexed by possible ways of gluing the Galois closures of the individual components of the covers appearing in $\cH_{(\Gamma,\Gamma')}$, or equivalently, by branches of the image of $\coprod \barH_{(\Gamma,\Gamma')}$ in $\barH_{(g/h,d)}$ before normalization, cf. \cite[\S 6.2, step (ii)]{lian_htaut}.

Finally, the $\psi$ classes occurring on $\barH_{(\wt{\Gamma},S_d)}$ may be identified (up to appropriate constant factors) with those on $\coprod \barH_{(\Gamma,\Gamma')}$ via the normalization map, so we may express the contribution from $\barH_{(\Gamma,\Gamma')}$ to $\xi_A^{*}\phi_{*}([\barH_{g/h,d}])$ in the desired way.
\end{proof}

\subsection{Hurwitz cycles with rational target}

We will later need the following statements which identify, in contrast with our main results, tautological contributions to pullbacks of Hurwitz cycles by boundary strata. As usual, both results hold true for all of our variants of $\barH_{g/h,d}$ (allowing any combination of additional marked points, higher ramification, or disconnected source curves).

\begin{lem}\label{taut_rational_target}
Consider $\barH_{g/h,d}$ and $\xi_A$ as above, and suppose further that $\barH_{(\Gamma,\Gamma')}$ is a boundary stratum appearing in the fiber product for which all vertices of $\Gamma'$ have genus 0. Then, the contribution of $\barH_{(\Gamma,\Gamma')}$ to $\xi_A^{*}\phi_{*}([\barH_{g/h,d}])$ has TKD on $\barM_A$.
\end{lem}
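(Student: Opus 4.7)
The strategy is to combine Proposition \ref{harris_mumford_contributions}(b) with the Faber-Pandharipande theorem \cite{fp} that loci of curves admitting maps to $\bP^1$ with prescribed ramification are tautological. The hypothesis that every vertex of $\Gamma'$ has genus $0$ forces every target component of an admissible cover parametrized by $\barH_{(\Gamma,\Gamma')}$ to be a $\bP^1$, which is exactly the situation governed by \cite{fp}.

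First I would decompose $\barH_{(\Gamma,\Gamma')}$, up to a finite map, as a product over $v'\in V(\Gamma')$ of Hurwitz spaces $\barH_{v'}$ parametrizing admissible covers $f_{v'}:X_{v'}\to Y_{v'}$ with $Y_{v'}$ a genus-zero (possibly nodal) marked curve, with source topology and ramification dictated by $\Gamma$ and the admissible cover $\gamma:\Gamma\to\Gamma'$. The source $X_{v'}$ may be disconnected and the ramification profiles need not be simple, but by applying \cite{fp} on each connected component of $X_{v'}$ (and taking exterior products), the source-map pushforward of $[\barH_{v'}]$ to $\prod_{v\in\gamma_V^{-1}(v')}\barM_{g_v,n_v}$ is a tautological class. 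Taking the exterior product over $v'\in V(\Gamma')$, the pushforward of $[\barH_{(\Gamma,\Gamma')}]$ to $\prod_{v\in V(\Gamma)}\barM_{g_v,n_v}$ is tautological.

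By Proposition \ref{harris_mumford_contributions}(b), the actual contribution to $\xi_A^{*}\phi_{*}([\barH_{g/h,d}])$ is the pushforward by $\phi_{(\Gamma,\Gamma')}$ of a polynomial in $\psi$-classes at half-edges of $\Gamma$, capped with $[\barH_{(\Gamma,\Gamma')}]$. These $\psi$-classes pull back from the corresponding $\psi$-classes on $\prod_{v}\barM_{g_v,n_v}$, and since $R^{*}$ is closed under multiplication by $\psi$-classes, the pushforward along the source map to $\prod_{v}\barM_{g_v,n_v}$ is again tautological. The map to $\barM_A=\prod_{a\in V(A)}\barM_{g_a,n_a}$ then factors through the boundary gluing maps that identify pairs of half-edges of $\Gamma$ collapsing to common vertices of $A$ under the $A$-structure; these are tautological boundary morphisms by definition, so the final class on $\barM_A$ lies in $\bigotimes_{a}RH^{*}(\barM_{g_a,n_a})$ and has TKD.

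The main obstacle is the bookkeeping around the source being disconnected with arbitrary ramification, as opposed to the connected, simply-ramified setup in which \cite{fp} is usually stated; this is handled routinely by decomposing into connected components and recording ramification indices at both marked points and nodes. A secondary point to be careful about is that Proposition \ref{harris_mumford_contributions}(b) identifies the contribution after passing to the normalization $\wt{H}_{g/h,d}$, but as explained in its proof, the relevant $\psi$-classes on $\barH_{(\wt\Gamma,S_d)}$ are pulled back from $\coprod\barH_{(\Gamma,\Gamma')}$ via the normalization $\nu_{(\Gamma,\Gamma')}$, so the computation can be performed directly on $\barH_{(\Gamma,\Gamma')}$ and no further issue arises.
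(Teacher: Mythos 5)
Your proposal is correct and follows essentially the same route as the paper: both reduce the contribution via Proposition \ref{harris_mumford_contributions}(b) to a $\psi$-decorated pushforward from $\barH_{(\Gamma,\Gamma')}$ and then invoke Faber--Pandharipande \cite{fp} for genus-zero targets. The extra bookkeeping you supply (the product decomposition over $V(\Gamma')$, disconnected sources, the projection formula for the $\psi$-classes, and factoring through the gluing maps to $\barM_A$) is exactly what the paper leaves implicit in its citation of \cite[Theorem 2]{fp}.
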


\begin{proof}
By Proposition \ref{harris_mumford_contributions}(b), this contribution is a polynomial in $\psi$ classes on $\barH_{(\Gamma,\Gamma')}$ at half-edges of $\Gamma$, pushed forward to $\barM_A$. However, because the target genera are all 0, we may identify the components of $\barH_{(\Gamma,\Gamma')}$ with spaces of relative stable maps to $\bP^1$ and the $\psi$ classes on them with Gromov-Witten classes, see \cite[\S 0.2.3, \S 1.2.2]{fp}. The claim is then immediate from \cite[Theorem 2]{fp}.
\end{proof}

\begin{lem}\label{taut_fixed_target}
Let $\delta':\barH_{g/h,d}\to \barM_{h,k}$ be the composition of a target map $\delta$ with a map forgetting any number of marked points. Let $[Y]$ be a point of $\barM_{h,k}$ and $\barH_{g/h,d}(Y)=\delta'^{*}([Y])$. Then, the class of the pushforward of $\barH_{g/h,d}(Y)$ to $\prod_{i}\barM_{g_i,n_i}$ has TKD.
\end{lem}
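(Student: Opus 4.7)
The idea is to exploit the cohomological nature of the statement, replacing $[Y]$ with the class of a conveniently chosen curve and reducing to Lemma~\ref{taut_rational_target}. Since $\barM_{h,k}$ is irreducible and smooth proper, its top cohomology $H^{2(3h-3+k)}(\barM_{h,k},\bQ)$ is one-dimensional, so any two point classes agree. We therefore choose $Y'$ to be a stable nodal curve of arithmetic genus $h$ with $k$ marked points whose irreducible components are all rational; such a $Y'$ exists in any nonempty $\barM_{h,k}$, e.g.\ as a graph of $\bP^1$'s glued along $h$ independent cycles, with the $k$ marked points arranged to enforce stability. Then $[Y]=[Y']$ in $H^*(\barM_{h,k})$, and it suffices to prove the claim for $Y'$.

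Let $\Gamma'$ denote the dual graph of $Y'$, so that $\barM_{\Gamma'}=\prod_{v\in V(\Gamma')}\barM_{0,k_v}$. The class $[Y']$ is a nonzero scalar multiple of $(\xi_{\Gamma'})_{*}\bigl(\prod_{v}[\mathrm{pt}_v]\bigr)$, and each $[\mathrm{pt}_v]$ is tautological by Keel's theorem on $H^*(\barM_{0,n})$. Combining the boundary-pullback diagram (Proposition~\ref{hm_cartesian_sets}) with the excess-intersection analysis of Proposition~\ref{harris_mumford_contributions}(b), we obtain
\begin{equation*}
\delta'^{*}([Y'])=\sum_{(\Gamma,\Gamma')}(\xi_{(\Gamma,\Gamma')})_{*}\Bigl(\delta_{(\Gamma,\Gamma')}^{*}\bigl({\textstyle\prod_{v}}[\mathrm{pt}_v]\bigr)\cdot P_{(\Gamma,\Gamma')}(\psi)\Bigr),
\end{equation*}
where $\Gamma$ ranges over source dual graphs admitting admissible covers of $\Gamma'$, and $P_{(\Gamma,\Gamma')}(\psi)$ is a polynomial in $\psi$ classes on $\barH_{(\Gamma,\Gamma')}$ at the nodes of $\Gamma$, encoding the non-reduced structure of the fiber product.

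Pushing forward by $\phi$ and projecting onto each boundary stratum $\barM_A=\prod_i\barM_{g_i,n_i}$ (where $A$ is the source dual graph determined by $\Gamma$), each contribution becomes a $\psi$-polynomial on $\barH_{(\Gamma,\Gamma')}$ pushed forward to $\barM_A$. Since every vertex of $\Gamma'$ has genus zero, we are exactly in the setting of Lemma~\ref{taut_rational_target}: the components of $\barH_{(\Gamma,\Gamma')}$ identify with moduli of relative stable maps to $\bP^1$, so by Faber-Pandharipande their $\psi$-polynomial source pushforwards are tautological, and by K\"{u}nneth the total contribution has TKD. The main obstacle is the fiber-product bookkeeping in the middle step --- tracking ramification-index multiplicities and $\psi$-class contributions via Proposition~\ref{harris_mumford_contributions}(b) --- but once this is done, the reduction to the rational-target Lemma~\ref{taut_rational_target} is essentially formal.
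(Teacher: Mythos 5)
Your proposal is correct and follows essentially the same route as the paper: degenerate $[Y]$ to the class of a totally rational stable curve, decompose the fiber into boundary strata $\barH_{(\Gamma,\Gamma')}$ with genus-zero target vertices, and conclude by Faber--Pandharipande (via Lemma~\ref{taut_rational_target}). The only slight inaccuracy is citing Proposition~\ref{harris_mumford_contributions}(b), which governs pullbacks of boundary classes on the \emph{source} $\barM_{g,N}$ by $\phi$; pulling back along the target map $\delta'$ is the easier operation (the analogue of \cite[\S 4.3.2]{lian_htaut}), producing strata of the expected dimension with integer multiplicities and no $\psi$-class corrections --- though since your more general $\psi$-polynomial form still lands in Lemma~\ref{taut_rational_target}, the argument goes through.
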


\begin{proof}
Points of $\barM_{h,k}$ are homologous to each other, so we may assume that $Y$ is a stable marked curve with only rational components. Then, $\barH_{g/h,d}(Y)$ may be expressed as a disjoint union of boundary strata (of the correct dimension) $\barH_{(\Gamma,\Gamma')}$ appearing with multiplicity, for example, by a straightforward analogue of \cite[\S 4.3.2]{lian_htaut} for Harris-Mumford spaces. The claim then follows from \cite[Theorem 2]{fp}.
\end{proof}

\subsection{Post-composing with forgetful maps}\label{forgetful_cartesian}

The results above concern classes coming from source maps $\phi:\barH_{g/h,d}\to\barM_{g,N}$, but we will be primarily concerned with classes obtained by post-composing with forgetful maps $\pi:\barM_{g,N}\to\barM_{g,r}$. The situation here is similar: we need only note that we have a Cartesian diagram (with the intersection occurring in the correct dimension)
\begin{equation*}
\xymatrix{
\coprod\barM_{g,A'} \ar[r]^{\coprod\xi_{A'}} \ar[d]_{\coprod\pi} & \barM_{g,N} \ar[d]^{\pi} \\
\barM_{g,A} \ar[r]^{\xi_A} & \barM_{g,r}
}
\end{equation*}
where the coproduct is over stable graphs $A'$ obtained from $A$ by distributing the remaining points over its vertices in all possible ways.

\section{Reductions}\label{reductions}

\begin{lem}[{cf. \cite[Lemma 10]{vanzelm}}]\label{add_ram}
Suppose that $\barH_{g/h,d,(m_2)^2(m_d)^d}\in H^{*}(\barM_{g,2m_2+dm_d})$ is non-tautological. Then, $\barH_{g/h,d,(m_2)^2(m_d)^d,n}\in H^{*}(\barM_{g,2m_2+dm_d+n})$ is non-tautological for all $n\ge0$.
\end{lem}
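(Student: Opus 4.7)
The plan is to apply the forgetful morphism
\begin{equation*}
\pi : \barM_{g,\, 2m_2 + dm_d + n} \to \barM_{g,\, 2m_2 + dm_d}
\end{equation*}
that drops the $n$ marked simple ramification points and stabilizes. The tautological ring is by definition closed under $\pi_{*}$, so it will suffice to show that $\pi_{*}$ sends the class with $n$ extra ramification marks to the one without, and then invoke the contrapositive.

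Concretely, set $m = m_2 + m_d$ and let $\phi' : \barH_{g/h,d,m} \to \barM_{g,\, 2m_2+dm_d+n}$ and $\phi'' : \barH_{g/h,d,m} \to \barM_{g,\, 2m_2+dm_d}$ be the two source-and-forget maps of the definition, defining $\barH_{g/h,d,(m_2)^2(m_d)^d,n}$ and $\barH_{g/h,d,(m_2)^2(m_d)^d}$ respectively. The key observation I would verify is that $\phi''$ retains strictly fewer marked points than $\phi'$ on the source curve (namely, the same data except that the $n$ simple ramification labels are dropped), so that on the nose $\phi'' = \pi \circ \phi'$ as maps of moduli stacks. Functoriality of proper pushforward of the fundamental class then gives
\begin{equation*}
\pi_{*}\, \barH_{g/h,d,(m_2)^2(m_d)^d,n} \;=\; \pi_{*}\,\phi'_{*} [\barH_{g/h,d,m}] \;=\; \phi''_{*} [\barH_{g/h,d,m}] \;=\; \barH_{g/h,d,(m_2)^2(m_d)^d}.
\end{equation*}
If the class on the left were tautological, its image under $\pi_{*}$ would be tautological, contradicting the hypothesis.

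There is no real obstacle, as the argument is purely formal functoriality; the only point I would make sure to justify cleanly is the identification $\phi'' = \pi \circ \phi'$, which amounts to checking that forgetting marked points on $\barH_{g/h,d,m}$ and then further forgetting on the target is the same as forgetting everything at once (and that stabilization is compatible). A minor bookkeeping remark is that the choice of which $n$ out of the $b$ simple ramification points to keep is immaterial: any two choices differ by a symmetry of $\barH_{g/h,d,m}$ permuting the branch labels, so the resulting classes agree and the same argument applies uniformly.
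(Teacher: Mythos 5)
Your argument is correct and is essentially the paper's own proof: push forward along the forgetful map $\pi$, use $\pi\circ\phi'=\phi''$ to identify $\pi_{*}$ of the marked class with $\barH_{g/h,d,(m_2)^2(m_d)^d}$ (the paper states this only ``up to a non-zero constant''), and conclude by closure of the tautological ring under forgetful pushforwards. No gap.
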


\begin{proof}
Up to a non-zero constant, the class $\barH_{g/h,d,(m_2)^2(m_d)^d,n}\in H^{*}(\barM_{g,2m_2+dm_d+n})$ pushes forward to $\barH_{g/h,d,(m_2)^2(m_d)^d}\in H^{*}(\barM_{g,2m_2+dm_d})$ upon forgetting the ramification points, so the result is immediate from the fact that tautological classes are closed under forgetful pushforwards.
\end{proof}

Lemma \ref{add_ram} immediately reduces Theorem \ref{main_thm} to the case $n=0$; we assume this henceforth unless otherwise mentioned.

\begin{lem}\label{d_to_2}
Suppose that $m_2\ge1$, and that $\barH_{g/h,d,(m_2)^2(m_d)^d}\in H^{*}(\barM_{g,2m_2+dm_d})$ is non-tautological. Then, $\barH_{g/h,d,(m_2-1)^2(m_d+1)^d}\in H^{*}(\barM_{g,2(m_2-1)+d(m_d+1)})$ is non-tautological.
\end{lem}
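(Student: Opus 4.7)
The plan is to show that $\barH_{g/h,d,(m_2)^2(m_d)^d}$ is a forgetful pushforward of $\barH_{g/h,d,(m_2-1)^2(m_d+1)^d}$, so that if the latter were tautological, so would be the former, contradicting the hypothesis. This mirrors exactly the argument of Lemma \ref{add_ram}.

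Concretely, both cycles are pushforwards of the fundamental class of the same moduli space $\barH_{g/h,d,m_2+m_d}$ under two different source maps. The map $\phi_1\colon \barH_{g/h,d,m_2+m_d}\to\barM_{g,2m_2+dm_d}$ defining $\barH_{g/h,d,(m_2)^2(m_d)^d}$ remembers only $2$ preimages above each of a chosen subset of $m_2$ marked target points and all $d$ preimages above the remaining $m_d$, while the map $\phi_2\colon \barH_{g/h,d,m_2+m_d}\to\barM_{g,2(m_2-1)+d(m_d+1)}$ defining $\barH_{g/h,d,(m_2-1)^2(m_d+1)^d}$ retains, in addition, the remaining $d-2$ preimages above one of those $m_2$ target points (such a point exists precisely because $m_2\ge 1$). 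After compatibly labeling the marked points, the composition of $\phi_2$ with the forgetful morphism $\pi\colon \barM_{g,2(m_2-1)+d(m_d+1)}\to \barM_{g,2m_2+dm_d}$ that drops these $d-2$ extra preimages equals $\phi_1$ on the nose.

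Therefore $\pi_*\,\barH_{g/h,d,(m_2-1)^2(m_d+1)^d}=\barH_{g/h,d,(m_2)^2(m_d)^d}$ in $H^{*}(\barM_{g,2m_2+dm_d})$, and the lemma follows from the closure of the tautological ring under pushforward by forgetful morphisms. I do not anticipate any serious obstacle; the only minor subtlety is ensuring the label bookkeeping (which of the $d$ preimages is designated as the surviving pair) is matched between $\phi_1$ and $\pi\circ\phi_2$, but this is a matter of convention since the property of being tautological is invariant under permutation of marked-point labels.
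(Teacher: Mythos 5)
Your proof is correct and is essentially the paper's own argument: the paper likewise observes that $\barH_{g/h,d,(m_2-1)^2(m_d+1)^d}$ pushes forward (up to a non-zero constant) to $\barH_{g/h,d,(m_2)^2(m_d)^d}$ under the forgetful map dropping $d-2$ points of one marked $d$-tuple, and concludes by closure of the tautological ring under forgetful pushforwards, exactly as in Lemma \ref{add_ram}. Your additional bookkeeping about the two source maps $\phi_1=\pi\circ\phi_2$ out of the common space $\barH_{g/h,d,m_2+m_d}$ is a fine way to make the identification precise.
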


\begin{proof}
Up to a non-zero constant, the class $\barH_{g/h,d,(m_2-1)^2(m_d+1)^d}\in H^{*}(\barM_{g,2(m_2-1)+d(m_d+1)})$ pushes forward to $\barH_{g/h,d,(m_2)^2(m_d)^d}\in H^{*}(\barM_{g,2m_2+dm_d})$, so we conclude as in Lemma \ref{add_ram}.
\end{proof}

\begin{lem}[{cf. \cite[Lemma 11]{vanzelm}}]\label{add_pair}
Suppose that $\barH_{g/h,d,(m_2)^2(m_d)^d}\in H^{*}(\barM_{g,2m_2+dm_d})$ is non-tautological. Then, $\barH_{g/h,d,(m_2+1)^2(m_d)^d}\in H^{*}(\barM_{g,2(m_2+1)+dm_d})$ is non-tautological.
\end{lem}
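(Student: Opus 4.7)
The plan, following \cite[Lemma 11]{vanzelm}, is to pull back $[\barH_{g/h,d,(m_2+1)^2(m_d)^d}]$ to the boundary divisor $D \subset \barM_{g,2(m_2+1)+dm_d}$ whose generic point parametrizes a curve with a rational tail carrying exactly the two paired points $p_1, p_2$ of the $(m_2+1)$-th pair. Via the gluing map, $D \cong \barM_{g, 2m_2+dm_d+1}$, with the extra marking being the attachment point $q$. By Proposition \ref{nontaut_criterion}, if $[\barH_{g/h,d,(m_2+1)^2(m_d)^d}]$ were tautological, then $\xi_D^*[\barH_{g/h,d,(m_2+1)^2(m_d)^d}]$ would be tautological on $D$, since TKD for a one-vertex graph means tautologicality.

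Using Propositions \ref{hm_cartesian_sets} and \ref{harris_mumford_contributions} together with the forgetful Cartesian diagram from \S\ref{forgetful_cartesian}, I would decompose this pullback as a sum of contributions from admissible-cover strata $\barH_{(\Gamma,\Gamma')}$ compatible with the rational-tail $A$-structure. The main summand $\Theta$ comes from the stratum in which $X$ itself has a rational tail $R$ with $\{p_1, p_2, n\}$, and $f|_R: R \to R'$ is a degree-$d$ cover onto a rational tail $R' \subset Y$, necessarily fully ramified at the node $n$ (since $R$ has only one node, forcing all $d$ sheets of $f|_R$ above $f(n)$ to coalesce there), with the $d-1$ simple branch points on $R'$ forgotten by $\phi'$. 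A direct dimension count shows this stratum has expected dimension, and the excess-intersection computation of Proposition \ref{harris_mumford_contributions}(b) identifies $\Theta$ with a non-zero class directly related to $[\barH_{g/h,d,(m_2)^2(m_d)^d}]$ via the forgetful map $\pi: \barM_{g, 2m_2+dm_d+1} \to \barM_{g, 2m_2+dm_d}$ that forgets $q$. Using the gluing section $\sigma: \barM_{g, 2m_2+dm_d} \to \barM_{g, 2m_2+dm_d+1}$ attaching a rational tail at $q$, which satisfies $\pi \circ \sigma = \id$, together with Proposition \ref{nontaut_criterion} applied to $\sigma$, one verifies the converse implication that $\pi^*\alpha$ tautological forces $\alpha$ tautological.

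All remaining contributions arise either from strata where the source rational tail emerges only upon stabilization of forgotten marked points, or from strata whose target components visited by the tail's image are all rational. Lemma \ref{taut_rational_target} handles the latter directly, and the forgetful Cartesian diagram from \S\ref{forgetful_cartesian} reduces the former to the latter by distributing forgotten marked points onto rational target components. Thus all secondary contributions have TKD on $D$, hence are tautological. Tautologicality of $\xi_D^*[\barH_{g/h,d,(m_2+1)^2(m_d)^d}]$ would therefore force tautologicality of $\Theta$, and ultimately of $[\barH_{g/h,d,(m_2)^2(m_d)^d}]$, contradicting the hypothesis. The main obstacle is the combinatorial bookkeeping for the secondary strata and confirming the precise $\psi$-class corrections for $\Theta$ in the decomposition of Proposition \ref{harris_mumford_contributions}(b); in the case $d > 2$, one must carefully track how the excess intersection promotes the fully-ramified stratum back to the generic simply-ramified Hurwitz cycle, but the argument proceeds in parallel with \cite[Lemma 11]{vanzelm}.
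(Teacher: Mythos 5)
Your overall strategy---pulling back to the rational-tail divisor and isolating a single contributing stratum---is the same as the paper's, but your identification of that stratum is wrong. You claim the main summand has a source rational tail $R$ mapping with degree $d$ to a rational tail $R'\subset Y$, totally ramified at the node, justified by ``$R$ has only one node, forcing all $d$ sheets of $f|_R$ above $f(n)$ to coalesce there.'' This reasoning fails because $f^{-1}(R')$ need not be contained in $R$: other components of $X$ attached to the genus-$g$ spine can also lie over $R'$. What the dimension count actually forces is the opposite picture: $R=X_0$ maps with degree $2$ to $Y_0$, simply ramified over the node, while the remaining $d-2$ sheets over $Y_0$ are rational tails mapping isomorphically and attached to the spine $X_g$, which covers the genus-$h$ component $Y_h$ with degree $d$ and has a simple ramification point over the node. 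Indeed, if $R$ had degree $e$ over $Y_0$ with total ramification at the node, Riemann--Hurwitz puts $e-1$ further simple branch points on $Y_0$, so at least $e$ of the $b$ marked points of $Y$ sit on $Y_0$ and the image of the stratum has dimension at most $b-e+1+(3h-3)=B-e+1$; only $e=2$ survives in homological degree $B-1$. Your ``direct dimension count shows this stratum has expected dimension'' is therefore false for $d>2$: the totally ramified stratum you single out contributes zero, and your argument has no nontrivial main term.

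A second, smaller issue is the mechanism for descending from the class on $\barM_{g,2m_2+dm_d+1}$ to the one on $\barM_{g,2m_2+dm_d}$. The extra marked point $q$ is the attachment node, which in the correct stratum is a \emph{simple ramification point} of the cover, so the class obtained is $\barH_{g/h,d,(m_2)^2(m_d)^d,1}$ (one marked ramification point), not a pullback $\pi^{*}$ of the unmarked class; your section-and-pullback argument does not apply to it. The paper instead invokes Lemma \ref{add_ram}: forgetting the ramification point pushes this class forward to a non-zero multiple of $\barH_{g/h,d,(m_2)^2(m_d)^d}$, so non-tautologicality of the latter propagates. Finally, the paper disposes of all other strata purely by the dimension count above (their images are too small, so they contribute zero in the relevant degree); your appeal to Lemma \ref{taut_rational_target} for ``secondary contributions'' is not needed and is not obviously applicable to all of them, e.g.\ to strata where the tail's image component is rational but the rest of $Y$ has positive genus.
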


\begin{proof}
We pull back to the boundary divisor
\begin{equation*}
\xi:\barM_{g,2m_2+dm_d+1}\cong\barM_{g,2m_2+dm_d+1}\times\overline{M}_{0,3}\to\barM_{g,2(m_2+1)+dm_d}
\end{equation*}
parametrizing marked curves with a rational tail marked by a pair of unramified points in the same fiber, and apply Proposition \ref{nontaut_criterion}. Let $m=(m_2+1)+m_d$. We have a diagram 
\begin{equation*}
\xymatrix{
\coprod \barH_{(\Gamma,\Gamma')} \ar[r] \ar[d] &\barH_{g/h,d,m} \ar[d]^{\phi}\\
\coprod \barM_{A} \ar[r] \ar[d] & \barM_{g,r} \ar[d]^{\pi}\\
\barM_{g,2m_2+dm_d+1}\times\overline{M}_{0,3} \ar[r]^(0.55){\xi} &\barM_{g,2(m_2+1)+dm_d}.
}
\end{equation*}
where $r=(2g-2)-d(2h-2)+2(m_2+1)+dm_d$, and the stable graphs $A$ arise from all possible ways to distribute the remaining marked points on the two components parametrized by $\barM_{g,2m_2+dm_d+1}\times\overline{M}_{0,3}$ as in \S\ref{forgetful_cartesian}. The bottom square is Cartesian, and the top square is Cartesian at least on the level of closed points as in Proposition \ref{hm_cartesian_sets}; it will turn out that the only strata $\barH_{(\Gamma,\Gamma)}$ contributing to the class in question will, in fact, be reduced.

Consider a general point $[f:X\to Y]$ on some $\barH_{(\Gamma,\Gamma')}$ in the fiber product. Because the graphs $A$ have only one edge, by the genericity condition from Proposition \ref{hm_cartesian_sets}, $Y$ may only have one node, which must be separating by Lemma \ref{sep_node}. 

Therefore, $Y$ is the union of two components $Y_0,Y_h$ of genus $0,h$, respectively. Moreover, if $X$ is the union $X_0\cup X_g$, with the two pieces corresponding to the vertices of $\Gamma$, then $X_0$ must be an irreducible (and smooth) rational curve living entirely over $Y_0$. Note, in addition, that $X_0$ must have degree at least 2 over $Y_0$, in order to contain two marked points in the same fiber.

Let $b=(2g-2)-d(2h-2)+(m_2+1)+m_d$ be the total number of marked points on $Y$, and let $B=b+(3h-3)$ be the dimension of $\barH_{g/h,d,(m_2+1)^2(m_d)^d}$. In order for $\barH_{(\Gamma,\Gamma')}$ to give a non-zero contribution to $H_{2(B-1)}(\barM_{g,2m_2+dm_d+1})$, we need the image of $\barH_{(\Gamma,\Gamma')}$ in $\barM_{g,2m_2+dm_d+1}$ to be supported in dimension (at least) $B-1$.

Let $b_0$ be the number of marked points of $Y$ whose marked pre-images, only including those not forgotten by $\pi$, lie entirely on $X_0$, and let $b_g=b-b_0$ be the number that have at least one marked pre-image on $X_g$. Note that $b_0\ge 2$, with at least one point coming from the unramified pair, and at least one more coming from a ramification point, as the degree of $X_0$ over $Y_0$ is at least 2. Therefore, $b_g\le b-2$. Then, by the quasi-finiteness of the target maps $\delta$, the dimension of the image of $\barH_{(\Gamma,\Gamma')}$ in $\barM_{g,2m_2+dm_d+1}$ is at most $b_g+1+(3h-3)\le B-1$, and that this number decreases if one or more of the $b_g$ marked points with a pre-image on $X_g$ lies on $Y_0$. 

Therefore, we must have equality everywhere. In particular, $X_0$ has degree 2 over $Y_0$ and is ramified over the node of $Y$, $X_g$ consists of a smooth component of genus $g$ mapping with degree $d$ to $Y_h$, ramified at one point over the node of $Y$, along with $d-2$ rational tails mapping isomorphically to $Y_0$. In addition, all marked unramified fibers must lie on $X_g$. The contributing covers are shown in Figure \ref{Fig:m2_induction}.

\begin{figure}[!htb]
     \includegraphics[width=.45\linewidth]{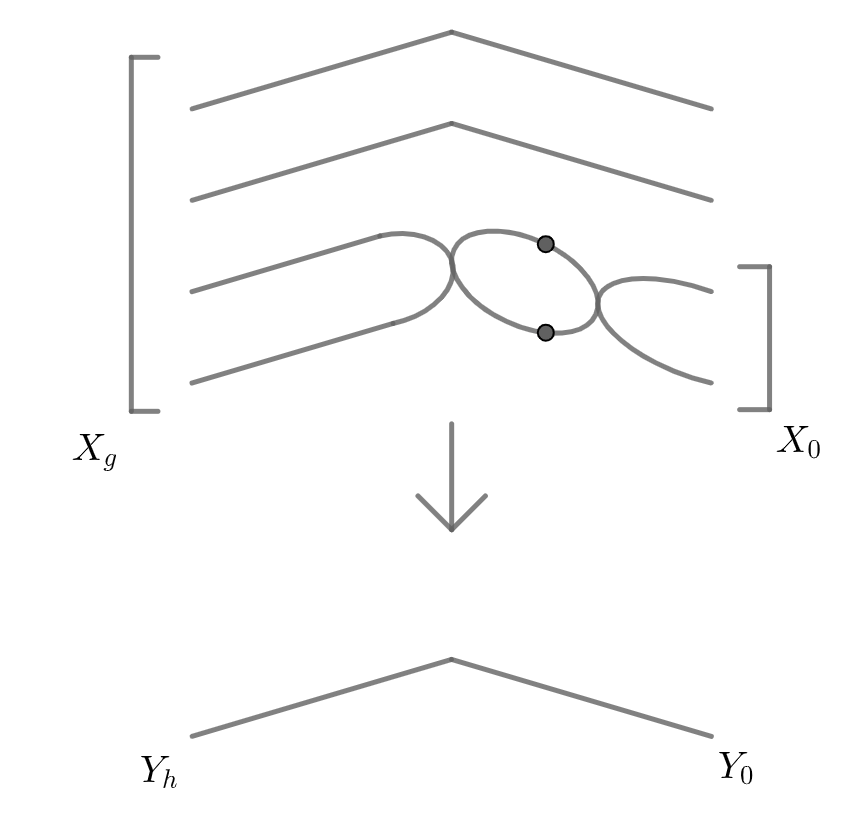}
     \caption{The only possible contribution to $\xi^{*}\barH_{g/h,d,(m_2+1)^2(m_d)^d}$. All other marked points lie on $X_g$.}\label{Fig:m2_induction}
\end{figure}

Now, we see that the pullback of $\barH_{g/h,d,(m_2+1)^2(m_d)^d}\in H^{*}(\barM_{g,2(m_2+1)+dm_d})$ by $\xi$, after forgetting the factor $\overline{M}_{0,3}$, gives, up to a non-zero constant, the class $\barH_{g/h,d,(m_2)^2(m_d)^d,1}\in H^{*}(\barM_{g,2m_2+dm_d+1})$. The proof is now complete by Lemma \ref{add_ram}.
\end{proof}

\section{$d$-elliptic loci}\label{d-ell_section}

In this section, we prove the first part of Theorem \ref{main_thm}, in the case $h=1$. We follow the approach of \cite{vanzelm}: we first handle the case $g+m_2=12$ by finding a non-zero contribution from odd cohomology on $\barM_{1,11}$ upon a boundary pullback, then use a different boundary pullback to induct on $g$.

Recall that we define the integers $a_d$, $d\ge2$ by
\begin{equation*}
\eta(q)^{48}=q^2\prod_{\ell\ge1}(1-q^{\ell})^{48}=\sum_{d\ge2}a_dq^d
\end{equation*}

\subsection{The case $g+m_2=12$}\label{equal12_section}

\begin{prop}\label{elliptic_equal12}
Suppose that $d\ge2$, $g\ge2$, $g+m_2=12$, and $a_d\neq0$. Then, $\barH_{g/1,d,(m_2)^2}\in H^{22}(\barM_{g,2m_2})$ is non-tautological.
\end{prop}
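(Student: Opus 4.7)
The plan is to apply the Graber--Pandharipande criterion (Proposition~\ref{nontaut_criterion}) via the boundary map
\[
\xi:\barM_{1,11}\times\barM_{1,11}\to\barM_{g,2m_2}
\]
that glues together $g-1$ pairs of points on two elliptic components and distributes the $2m_2$ marked points as $m_2$ conjugate pairs on each side. Since every algebraic class on $\barM_{1,11}$ is tautological (Lemma~\ref{even_algebraic}) and every class on $\barM_{1,11}\times\barM_{1,11}$ supported on the boundary has TKD (Lemma~\ref{boundary_taut}), it will suffice to exhibit a non-zero contribution to $\xi^{*}\barH_{g/1,d,(m_2)^2}$ on the open locus $\cM_{1,11}\times\cM_{1,11}$ from the odd K\"unneth component $\omega\boxtimes\bar\omega$ (or its conjugate).

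First I would use Proposition~\ref{hm_cartesian_sets} together with Proposition~\ref{harris_mumford_contributions} to enumerate the boundary strata $\barH_{(\Gamma,\Gamma')}$ of $\barH_{g/1,d,m_2}$ whose images under the source map $\phi$ meet the stratum $\xi(\barM_{1,11}\times\barM_{1,11})$. For a generic cover $[f:X\to Y]$ in such a stratum, the $g-1$ nodes prescribed by the $A$-structure cut $X$ into two pieces $X_1, X_1'$ of arithmetic genus $1$, which we may assume to be smooth for the purpose of detecting odd interior cohomology. A dimension count using the quasi-finiteness of the target map, combined with Lemmas~\ref{taut_rational_target} and~\ref{taut_fixed_target}, should reduce the non-tautological content of the pullback to the contribution from covers in which both $X_1, X_1'$ map to a common smooth elliptic target $Y_1$ via connected isogenies of degrees $d_1+d_1'=d$. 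The remaining strata---in which $Y$ contains rational components near an elliptic factor, or one piece $X_i$ maps to a fixed elliptic while the other maps to a rational spine---have TKD by the cited lemmas.

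Next, I would compute this isogeny contribution. By Proposition~\ref{harris_mumford_contributions}(b), its pushforward to $\barM_{1,11}\times\barM_{1,11}$ is, up to a non-zero rational scalar arising from $\psi$-class and multiplicity corrections, the class of a Hecke-type correspondence on $\cM_{1,11}\times\cM_{1,11}$: over a general pointed target $(Y_1,y_1,\ldots,y_{11})$, it parametrizes pairs of connected isogenies $X_1\to Y_1$, $X_1'\to Y_1$ of total degree $d$ together with compatible preimages of the eleven target markings. Its pairing with the K\"unneth component $\omega\boxtimes\bar\omega$ is the eigenvalue of the induced correspondence on $H^{0}(\barM_{1,11},\Omega^{11})$, which I would compute directly from the analytic description in \S\ref{m11_intro}: each isogeny acts on the universal cover by a compatible rescaling of $(z,\zeta_1,\ldots,\zeta_{10})$, so that multiplying the two copies of $\eta(q)^{24}$ coming from $\omega$ and $\bar\omega$ and summing over isogenies of total degree $d$ produces the $d$-th Fourier coefficient $a_d$ of $\eta(q)^{48}$.

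The main obstacle will be this Hecke computation: identifying the correspondence precisely, tracking the marked-pair data through it, and verifying that the eigenvalue on $\omega\boxtimes\bar\omega$ is really $a_d$ rather than some related quantity that might vanish accidentally. A secondary task is to check that the various $\psi$-class and multiplicity corrections of Proposition~\ref{harris_mumford_contributions}(b), which can differ between the distinct contributing strata in the isogeny locus, cannot combine to cancel the odd-cohomology term. Given these, since $a_d\neq 0$ by hypothesis, Proposition~\ref{nontaut_criterion} forces $\barH_{g/1,d,(m_2)^2}$ to be non-tautological.
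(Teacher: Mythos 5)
Your proposal follows the paper's proof essentially step for step: the same boundary pullback to $\barM_{1,11}\times\barM_{1,11}$, the same reduction (via Lemmas~\ref{boundary_taut}, \ref{even_algebraic}, \ref{taut_rational_target}, \ref{taut_fixed_target}) to the locus of pairs of isogenies of total degree $d$ over a common elliptic target, and the same identification of that contribution as a product of Hecke correspondences applied to the diagonal, with eigenvalue $\sum_{d_1+d_1'=d}\tau(d_1)\tau(d_1')=a_d$ on the odd part. The cancellation worry you flag is resolved exactly as the paper does: these strata have the expected dimension, so Proposition~\ref{harris_mumford_contributions}(a) applies with a multiplicity independent of $(d_1,d_1')$ and no $\psi$-corrections arise.
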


We will prove Proposition \ref{elliptic_equal12} by pullback to the boundary stratum $\xi:\barM_{1,11}\times\barM_{1,11}\to \barM_{g,2m_2}$ defined by gluing $g-1$ pairs of marked points on the two elliptic components together. We have a diagram

\begin{equation*}
\xymatrix{
\coprod \barH_{(\Gamma,\Gamma')} \ar[r] \ar[d] & \barH_{g/1,d,m_2} \ar[d]^{\phi}\\
\coprod \barM_{A} \ar[r] \ar[d] & \barM_{g,N} \ar[d]^{\pi}\\
\barM_{1,11}\times\barM_{1,11} \ar[r]^(0.59){\xi} & \barM_{g,2m_2}
}
\end{equation*}
where $N=(d-1)(2g-2)+dm_2$, and the stable graphs $A$ arise from all possible ways to distribute the remaining marked points on the two components parametrized by $\barM_{1,11}\times\barM_{1,11}$. The bottom square is Cartesian, and the top square is Cartesian on the level of closed points.

We consider the contributions to the intersection of $\xi$ and $\phi':\barH_{g/1,d,m_2}\to\barM_{g,2m_2}$ from each $\barH_{(\Gamma,\Gamma')}$ separately. First, note that if the image of $\barH_{(\Gamma,\Gamma')}$ in $\barM_{1,11}\times\barM_{1,11}$ is supported on the boundary, then the corresponding contribution to $\xi^{*}\barH_{g/1,d,(m_2)^2}$ automatically has TKD by Lemma \ref{boundary_taut}.

Therefore, we can assume in particular that the generic cover $[X\to Y]$ of $\cH_{(\Gamma,\Gamma')}$ has the property that the source curve $X$ has two smooth genus 1 components $X_1,X'_1$. Let $Y_1,Y'_1\subset Y$ be the image components of $X_1,X'_1$, respectively. We then have three cases:
\begin{enumerate}
\item $Y_1\neq Y'_1$,
\item $Y_1=Y'_1$ is a smooth rational curve, and
\item $Y_1=Y'_1$ is a smooth genus 1 curve.
\end{enumerate}

\begin{lem}\label{type_12_contribution}
The contributions to $\xi^{*}\barH_{g/1,d,(m_2)^2}$ from strata $\barH_{(\Gamma,\Gamma')}$ whose general point satisfies either (i) or (ii) have TKD.
\end{lem}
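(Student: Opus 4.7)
The plan is to exploit the product decomposition $\barH_{(\Gamma,\Gamma')} \cong \prod_{v' \in V(\Gamma')} \barH_{v'}$ over vertices of $\Gamma'$, combined with the Faber--Pandharipande tautologicality result for covers of $\bP^1$. A useful reformulation of TKD on $\barM_{1,11} \times \barM_{1,11}$: by Lemmas \ref{even_algebraic} and \ref{odd_cohomology}, an algebraic class has TKD if and only if its K\"unneth component in $H^{\mathrm{odd}}(\barM_{1,11}) \otimes H^{\mathrm{odd}}(\barM_{1,11})$ vanishes. So in case (i) it will suffice to ensure that at least one K\"unneth tensor factor lies in even cohomology.

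For case (i), since $Y_1 \neq Y_1'$, the source components $X_1$ and $X_1'$ live in distinct factors $\barH_{Y_1}$ and $\barH_{Y_1'}$ of the product decomposition of $\barH_{(\Gamma,\Gamma')}$. By Proposition \ref{harris_mumford_contributions}(b), the contribution to $\xi^{*}\barH_{g/1,d,(m_2)^2}$ is the pushforward of a polynomial in $\psi$ classes at half-edges of $\Gamma$; since each such $\psi$ class is pulled back from a single factor $\barH_{v'}$, the contribution on $\barM_A$ decomposes as a sum of K\"unneth tensors $\sum_i \alpha_1^{(i)} \otimes \alpha_2^{(i)}$ on $\barM_{1,11} \times \barM_{1,11}$, with scalar multipliers produced by the pushforwards along the remaining factors $\barH_{v'}$ (which record no data on $\barM_A$). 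Because $Y$ has arithmetic genus $1$, at most one of the distinct components $Y_1, Y_1'$ can have geometric genus $1$; say $Y_1 \cong \bP^1$. Then each $\alpha_1^{(i)}$ is the pushforward of a $\psi$-polynomial from a Hurwitz space with rational target, hence tautological by the Faber--Pandharipande argument used in the proof of Lemma \ref{taut_rational_target} (applied to the single factor $\barH_{Y_1}$ rather than to all of $\barH_{(\Gamma,\Gamma')}$). In particular each $\alpha_1^{(i)}$ lies in $H^{\mathrm{even}}(\barM_{1,11})$, so the odd--odd K\"unneth component vanishes, giving TKD.

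For case (ii), both $X_1$ and $X_1'$ lie in the single factor $\barH_{Y_1}$ corresponding to the common rational target $Y_1 = Y_1'$. I would identify this factor with a moduli space of (possibly disconnected) relative stable maps to $\bP^1$, so that the pushforward to $\barM_{1,11} \times \barM_{1,11}$ records $X_1$ and $X_1'$ as separate target factors. Then \cite[Theorem 2]{fp} yields that the pushforward of any polynomial in $\psi$ classes from $\barH_{Y_1}$ lies in $RH^*(\barM_{1,11}) \otimes RH^*(\barM_{1,11})$, which is exactly TKD. The remaining factors $\barH_{v'}$, $v' \neq Y_1$, again contribute only scalar multiplicities.

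The main obstacle will be to verify cleanly that the $\psi$-class polynomial of Proposition \ref{harris_mumford_contributions}(b) factors across vertices of $\Gamma'$ (so that the K\"unneth structure in case (i) is manifest), and that the factors of $\barH_{(\Gamma,\Gamma')}$ containing neither $X_1$ nor $X_1'$ really contribute only scalars rather than nontrivial cohomology classes to the final pushforward onto $\barM_A$. Once this bookkeeping is dispatched, the factor-by-factor application of Faber--Pandharipande is essentially formal.
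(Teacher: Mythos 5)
Your overall strategy coincides with the paper's: decompose $\barH_{(\Gamma,\Gamma')}$ over the target components and feed the rational-target factors into Faber--Pandharipande. But the two cases are handled more efficiently in the paper, and your one flagged worry is resolved by an observation you are missing. In case (i) the paper does not invoke \cite{fp} at all: since the two genus-one vertices of $\Gamma$ lie over \emph{different} vertices of $\Gamma'$, the pushforward to $\barM_A$ is literally a product of algebraic classes across the two sides, and one concludes directly from Lemma \ref{even_algebraic} (every algebraic class on $\barM_{1,11}$ is tautological) together with Keel's theorem for the rational factors; your detour through the parity of K\"unneth components and through FP for the rational-target side is correct but unnecessary. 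In case (ii) the key point, which you do not state, is that \emph{every} component of $Y$ is rational: a component of $Y$ of positive genus would have to be dominated by a component of $X$ of positive genus, and the only candidates $X_1, X_1'$ both map to the rational $Y_1=Y_1'$. Hence Lemma \ref{taut_rational_target} applies verbatim to the whole stratum. This also disposes of your ``main obstacle'': the factors $\barH_{v'}$ with $v'\neq Y_1$ do \emph{not} contribute mere scalars --- their source components are rational tails whose moduli enter the class on $\barM_{1,11}\times\barM_{1,11}$ through boundary and forgetful pushforwards --- but since their targets are also rational, FP makes each such factor tautological, and boundary/forgetful pushforwards preserve tautologicalness, so the conclusion survives. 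With that observation added, your argument closes; without it, the claim that the remaining factors ``contribute only scalar multiplicities'' is false as stated, though not fatally so.
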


\begin{proof}
First, consider case (i). The component $\barH_{(\Gamma,\Gamma')}$ in question has the property that $\Gamma$ has two vertices of genus 1, which map to different vertices $v_1,v'_1\in\Gamma'$, and the rest of the vertices of $\Gamma$ have genus 0. We may decompose 
\begin{equation*}
\barM_{A}=\barM_{v_1}\times\barM_{v'_1}\times\barM_{w},
\end{equation*}
where $\barM_{v_1},\barM_{v'_1}$ parametrize the components of $X$ mapping to $Y_1,Y'_1$, respectively, and $\barM_{w}$ parametrizes all other components. The spaces $\barM_{v_1},\barM_{v'_1}$ are products of a single moduli space of pointed genus 1 curves with a collection of moduli spaces of pointed rational curves, whereas $\barM_{w}$ is a product of moduli spaces of pointed rational curves. 

The pushforward of $\barH_{(\Gamma,\Gamma')}$ to $\barM_{A}$ decomposes into a product of algebraic classes on the components $\barM_{v_1},\barM_{v'_1},\barM_{w}$, and therefore has TKD, by Lemma \ref{even_algebraic} and the fact that all cohomology on moduli spaces of pointed rational curves is tautological \cite{keel}. In particular, the further pushforward to $\barM_{1,11}\times\barM_{1,11}$ also has TKD.

Now, consider case (ii). Note that all components of $Y$ must be rational, because the only two components of $X$ which can map to a higher genus curve, namely $X_1$ and $X'_1$, both map to a rational component. Therefore, the resulting contribution of $\barH_{(\Gamma,\Gamma')}$ to $\xi^{*}\barH_{g/1,d,(m_2)^2}$ has TKD by Lemma \ref{taut_rational_target}.
\end{proof}

\begin{lem}\label{type_3_contribution}
All strata $\barH_{(\Gamma,\Gamma')}$ whose general point satisfies (iii) and which give non-zero contributions to $\xi^{*}\barH_{g/1,d,(m_2)^2}$ have general point $[f:X\to Y]$ of the following form, also depicted in Figure \ref{Fig:isogeny}.

$Y$ consists of an elliptic component $Y_1$ with $m_2$ marked points and $g-1$ rational tails, each of which contains two branch points. $X$ contains two elliptic components $X_1,X'_1$ over $Y_1$, connected by $g-1$ rational bridges, mapping to the rational tails of $Y$ with degree 2, and all other components living over the rational tails have degree 1. Finally, the unramified pairs of marked points of $X$ live over those of $Y$, with one point of each pair on $X_1$ and $X'_1$. 
\end{lem}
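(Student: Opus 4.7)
The plan is to analyze a general point $[f\colon X\to Y]$ of a stratum $\barH_{(\Gamma,\Gamma')}$ of type (iii) and determine when its contribution to $\xi^{*}\barH_{g/1,d,(m_2)^2}$ can fail to have TKD. By Lemma \ref{boundary_taut}, contributions whose image in $\barM_{1,11}\times\barM_{1,11}$ lies on the boundary automatically have TKD, so I will characterize strata whose image meets $\cM_{1,11}\times\cM_{1,11}$ in the expected dimension $11$. As a first step, I would use that $Y$ has arithmetic genus $1$ with smooth elliptic $Y_1$ to conclude that all other components of $Y$ are rational and $\Gamma'$ is a tree rooted at $Y_1$; the $A$-structure on $\Gamma$ partitions its vertices into two clusters $\widetilde{X_1},\widetilde{X'_1}$ (containing $X_1,X'_1$ respectively), each necessarily a tree with its elliptic component as its sole genus-$1$ vertex in order to have arithmetic genus $1$, and the two clusters are joined by exactly $g-1$ preserved edges of $\Gamma$. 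Applying Riemann-Hurwitz then forces $X_1\to Y_1$ and $X'_1\to Y_1$ to be unramified isogenies.

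\textbf{Dimension count and surjectivity.} Next I would use that $\delta$ is quasi-finite to bound $\dim\barH_{(\Gamma,\Gamma')}\le\dim\barM_{\Gamma'}=L-V(\Gamma')+1$ with $L=(2g-2)+m_2=g+10$; so any non-zero dimension-$11$ contribution forces $V(\Gamma')\le g$. Combining this with the genericity condition of Proposition \ref{hm_cartesian_sets}, which makes the $g-1$ preserved edges of $\Gamma$ surject onto $E(\Gamma')$, and with $|E(\Gamma')|=V(\Gamma')-1$, I get $V(\Gamma')=g$ and exactly one preserved edge of $\Gamma$ over each edge of $\Gamma'$.

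\textbf{Non-boundary image forces a star, which is the main obstacle.} The image of $\barH_{(\Gamma,\Gamma')}$ lies in $\cM_{1,11}\times\cM_{1,11}$ if and only if each cluster stabilizes to a smooth elliptic curve with $11$ distinct marks, which requires every rational component of the cluster to carry at most $2$ special points (between-cluster preserved-edge legs, cluster-internal nodes, and pair marks all count) and the $11$ marks to remain distinct after contraction. The main obstacle is to show that this condition forces $\Gamma'$ to be a star $Y_1\cup R_1\cup\cdots\cup R_{g-1}$: for any non-star $\Gamma'$, some rational vertex $R$ has valence $\ge 2$ in $\Gamma'$, and by analyzing the admissible-cover condition at the nodes of $R$ together with the cluster tree structure, some rational component of $X$ over $R$ necessarily acquires $\ge 3$ special points and fails to be contractible, pushing the image to the boundary of $\barM_{1,11}\times\barM_{1,11}$.

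\textbf{Cover structure over tails and location of pair marks.} Once $\Gamma'$ is forced to be a star, over each tail $R_i$ the unique preserved edge of $\Gamma$ pinpoints a ``bridge'' component $B_i\subset X$; the admissible-cover condition at $y_i=Y_1\cap R_i$, comparing the unramified preimages from $X_1\cup X'_1$ with those on $B_i$, forces $B_i$ to be a rational degree-$2$ cover of $R_i$ unramified at $y_i$, and Riemann-Hurwitz then gives exactly two simple branches on $R_i$. The other $d-2$ components over $R_i$ are degree-$1$ copies, each contractible. Finally, a pair mark placed on any rational component of a cluster would, after contraction, land at a specific preimage of some $y_i$ on $X_1$, imposing a codimension-$1$ incidence that drops the image dimension below $11$; hence the pair marks must lie on $X_1\cup X'_1$ with one point of each pair on each side, as claimed.
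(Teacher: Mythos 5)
Your overall strategy --- keep only strata whose image meets $\cM_{1,11}\times\cM_{1,11}$ in dimension $11$ and show this forces the star configuration --- matches the paper's, but the two places where you would actually force the star are both incomplete. In the dimension count you derive $V(\Gamma')\le g$ twice: once from $\dim\barM_{\Gamma'}=L-V(\Gamma')+1\ge 11$, and once from the genericity condition $|E(\Gamma')|\le|E(A)|=g-1$. Two upper bounds do not yield $V(\Gamma')=g$, and nothing at that stage rules out, say, a single rational tail carrying many branch points with several edges of $A$ mapping to the one node of $Y_1$; so neither $V(\Gamma')=g$ nor the claimed bijectivity of $E(A)\to E(\Gamma')$ is established there. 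More seriously, the step you yourself label ``the main obstacle'' --- that a non-star $\Gamma'$ pushes the image into the boundary because some rational component of $X$ over $R$ acquires $\ge 3$ special points --- is only asserted, and the proposed mechanism is not the right one: ramification points and the forgotten points of the marked fibers are not special points for the stabilization, so a component of $X$ over a rational vertex can carry arbitrary branching and still contract. A non-star stratum can in principle have all rational components of $X$ contractible; what actually kills it is a drop in moduli, not a failure of contractibility.

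The paper's argument locates the count on $Y_1$ directly: by quasi-finiteness of $\delta$ and finiteness of isogenies, the image in $\barM_{1,11}\times\barM_{1,11}$ has dimension at most $s+t$, where $s$ and $t$ are the numbers of marked points and nodes on $Y_1$; since $X_1\sqcup X_1'\to Y_1$ is unramified, every marked point of $Y_1$ is one of the $m_2$ unramified-fiber points, so $s\le m_2$, while stability forces each rational tree hanging off $Y_1$ to carry at least two of the remaining $2g-2+(m_2-s)$ marked points of $Y$, so $s+t\le m_2+g-1=11$. Equality in $11\le s+t$ then simultaneously forces $s=m_2$, $t=g-1$, each tree to be a single $\bP^1$ with exactly two branch points, and hence the star, the degree-$2$ bridges, and the placement of the marked pairs. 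Your final paragraph on the structure over the tails is essentially correct once the star and the bijection $E(A)\to E(\Gamma')$ are in hand, but as written your proof does not get there; you should replace the contractibility analysis with a bound on the image dimension in terms of the special points of $Y_1$.
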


\begin{figure}[!htb]
     \includegraphics[width=.45\linewidth]{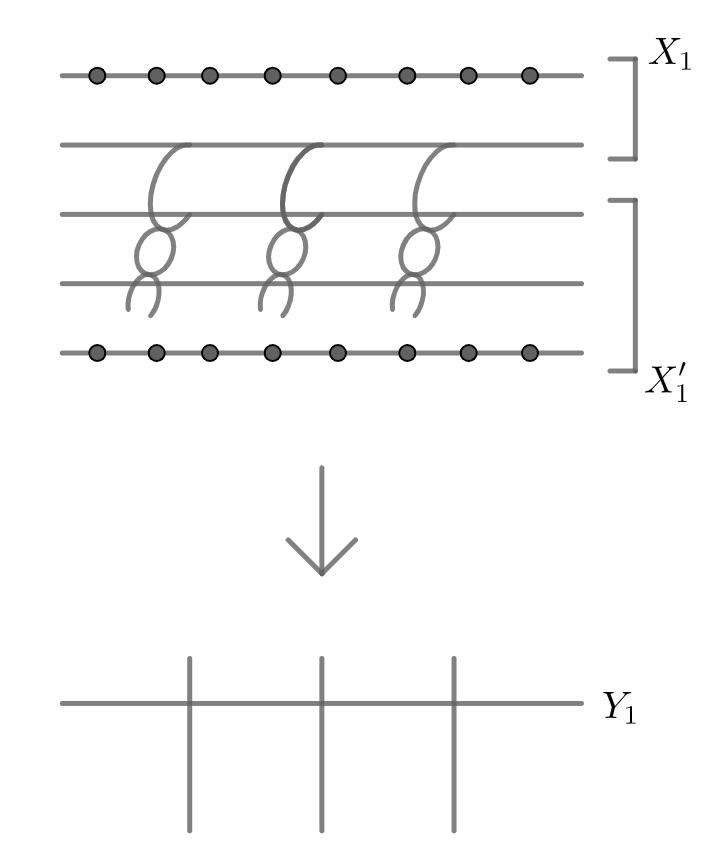}
          \caption{The only possible non-tautological contributions to $\xi^{*}\barH_{g/1,d,(m_2)^2}$. Here, $(g,m_2)=(4,8)$. The rational tails of $X$ mapping isomorphically to those of $Y$ are not shown.}\label{Fig:isogeny}
\end{figure}

\begin{proof}
In order for $\barH_{(\Gamma,\Gamma')}$ to give a non-zero contribution to $\xi^{*}\barH_{g/1,d,(m_2)^2}$, its image in $\barM_{1,11}\times\barM_{1,11}$ must have dimension at least 11. By assumption, the image of $\barH_{(\Gamma,\Gamma')}$ is not supported in the boundary of $\barM_{1,11}\times\barM_{1,11}$, so all of the moduli must live over $Y_1=Y'_1$. Thus, the total number of nodes and marked points on $Y_1$ must be at least 11.

The pre-image of $Y_1$ must consist exactly of the two components $X_1,X'_1$, covering $Y_1$ via isogenies of degrees $d_1,d'_1$, with $d_1+d'_1=d$. In particular, the $s$ marked points on $Y_1$ each correspond to one of the $m_2$ unramified fibers. On the other hand, if there are $t$ nodes on $Y_1$, at which trees of rational components are attached, each such node contributes at least 2 branch points to $Y_1$. Therefore, we have
\begin{equation*}
11\le s+t\le m_2+g-1=11,
\end{equation*}
meaning we have equality everywhere. The conclusion then follows easily.
\end{proof}

To show that, in total, such $\barH_{(\Gamma',\Gamma)}$ give a contribution to $\barM_{1,11}\times\barM_{1,11}$ without TKD, we need the following lemma.

\begin{lem}\label{hecke_lemma}
Let $\barH_{1/1,k,11}^\circ$ be the space of 11-pointed admissible degree $m$ covers $f:X\to Y$, where $X,Y$ have genus 1, and 11 marked points of $X$ are chosen over those of $Y$. (Note that this differs from the usual space $\barH_{1/1,k,11}$ in that here we only mark one point in each fiber.)

Consider the operator $T_k=\phi_{*}\circ\delta^{*}$ acting on $H^{11}(\barM_{1,11})$, induced by the correspondence
\begin{equation*}
\xymatrix{
\barH_{1/1,k}^{11} \ar[r]^{\delta} \ar[d]^{\phi} & \barM_{1,11} \\
\barM_{1,11} &
}.
\end{equation*}
Then, $T_k$ acts on the two-dimensional vector space $H^{11}(\barM_{1,11},\bQ)$ by multiplication by $\tau(k)$, the $q^k$-coefficient of the normalized weight 12 cusp form $\eta(q)^{24}$.
\end{lem}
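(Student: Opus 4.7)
The plan is to work on the complex-analytic uniformization $\cM_{1,11} = (\bH \times \bC^{10})/(\SL_2(\bZ) \ltimes (\bZ^2)^{10})$ (with diagonals and zero sections removed), where the form $\omega$ lifts to $\eta(e^{2\pi i z})^{24}\, dz \wedge d\zeta_1 \wedge \cdots \wedge d\zeta_{10}$ on $\bH \times \bC^{10}$. Because the correspondence is algebraic, $T_k$ preserves the Hodge decomposition $H^{11}(\barM_{1,11},\bC) = H^0(\Omega^{11}) \oplus \overline{H^0(\Omega^{11})}$, and Lemma \ref{odd_cohomology} says each summand is one-dimensional. Thus $T_k \omega = \lambda \omega$ for some $\lambda$, and $T_k \bar\omega = \bar\lambda \bar\omega$ by conjugation; since $T_k$ preserves the $\bQ$-structure, we conclude $\lambda \in \bQ$, and it suffices to compute $\lambda$.

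To compute $(T_k \omega)(p') = \phi_* \delta^* \omega$ at a generic point $p' = (X, x_1, \ldots, x_{11})$ with local coordinates $(z', \zeta'_1, \ldots, \zeta'_{10})$, I would enumerate the fiber $\phi^{-1}(p')$. By Riemann-Hurwitz, a degree $k$ cover $f: X \to Y$ of smooth genus 1 curves is unramified, hence an isogeny once we fix $x_1 = 0$ as the origin of $X$ and set $y_i = f(x_i)$. Such isogenies are in bijection with overlattices $\Lambda_Y \supset \bZ + \bZ z'$ of index $k$; via the bijection $L \mapsto \tfrac{1}{k} L$ with sublattices of index $k$, these are parametrized by triples $(a, b, d)$ with $ad = k$, $a, d \ge 1$, $0 \le b < d$, giving $|\phi^{-1}(p')| = \sigma_1(k)$. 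Explicitly $\Lambda_Y = \tfrac{1}{k}(\bZ d + \bZ(az' + b))$, and rescaling $\bC$ by $a = k/d$ identifies $Y$ with $\bC/(\bZ + \bZ z)$ where $z = (az' + b)/d$, and the isogeny $X \to Y$ becomes $w \mapsto aw$. In particular, the $Y$-coordinates of the marked points become $\zeta_i = a \zeta'_i$ modulo $\bZ + \bZ z$.

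Next, I would perform the change of variables to express $\delta^*\omega$ in $X$-side coordinates: $dz = (a/d)\, dz'$ and $d\zeta_i = a\, d\zeta'_i$ yield $dz \wedge d\zeta_1 \wedge \cdots \wedge d\zeta_{10} = (a^{11}/d)\, dz' \wedge d\zeta'_1 \wedge \cdots \wedge d\zeta'_{10}$. Summing over the fiber and using $a^{11}/d = k^{11}/d^{12}$,
\begin{equation*}
(T_k \omega)(z', \zeta'_\bullet) = k^{11} \left( \sum_{\substack{ad = k \\ 0 \le b < d}} d^{-12}\, \eta\!\left(e^{2\pi i (az' + b)/d}\right)^{24} \right) dz' \wedge d\zeta'_1 \wedge \cdots \wedge d\zeta'_{10}.
\end{equation*}
The parenthesized factor is exactly the classical weight-12 Hecke operator $T_k$ applied to $\eta(q)^{24}$. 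Since $\eta(q)^{24}$ is the unique (up to scalar) weight-12 cusp form and hence a normalized Hecke eigenform with eigenvalue $\tau(k)$, the sum equals $\tau(k)\, \eta(e^{2\pi i z'})^{24}$, whence $T_k \omega = \tau(k)\, \omega$, and likewise $T_k \bar\omega = \tau(k)\, \bar\omega$ (as $\tau(k) \in \bZ$).

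The main obstacle is the bookkeeping: setting up the parametrization of overlattices correctly, consistently rescaling each $Y$ into standard form $\bC/(\bZ + \bZ z)$, and ensuring all Jacobian factors match the standard normalization of the classical Hecke operator so that the reduction to the eigenvalue equation for $\eta(q)^{24}$ is unambiguous. Once these are handled, the identification with the classical Hecke action is direct.
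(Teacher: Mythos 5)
Your proposal is correct and follows essentially the same route as the paper: both work on the analytic uniformization of $\cM_{1,11}$, enumerate the fiber of $\phi$ over a fixed source curve, track the Jacobian factors on $dz\wedge d\zeta_1\wedge\cdots\wedge d\zeta_{10}$, and identify the resulting sum with the classical weight-$12$ Hecke operator acting on the eigenform $\eta(q)^{24}$. The only difference is cosmetic: you use the Hermite normal form representatives $(a,b,d)$ with $ad=k$, $0\le b<d$ for the sublattices, whereas the paper phrases the same sum via the coset space $\SL_2(\bZ)\backslash M_k$ with general matrices and the factor $(cz+d)^{-12}$.
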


\begin{proof}
In fact, it suffices to consider the action of $T_k$ on the class of the discriminant form $[\omega]\in H^{11,0}(\cM_{1,11},\bC)$, see \S\ref{m11_intro}. Indeed, $T_k$ necessarily acts by the same constant on both $H^{11}(\barM_{1,11})$ and $H^{11}(\cM_{1,11})$.

We give complex-analytic descriptions of the spaces involved. First, we have $\cM_{1,1}=\bH/\SL_2(\bZ)$. Now, consider $\cH_{1/1,k}$. If $E=\bC/\Lambda$ is an elliptic curve, then isogenies $E'\to E$ are in bijection with index $k$ sublattices $\Lambda\subset\Lambda'$, which in turn are in bijection with the right orbit space $SL_{2}(\bZ)\backslash M_k$, where $M_k$ is the set of integer matrices of determinant $k$. In addition, we have a monodromy action of $SL_2(\bZ)$ on such lattices on the left, and components of $\cH_{1/1,k}$ are indexed by the double orbit space $\SL_2(\bZ)\backslash M_k/\SL_2(\bZ)$. 

Now, for any orbit representative $A\in \SL_2(\bZ)\backslash M_k/\SL_2(\bZ)$, define the congruence subgroup $\Gamma_A\subset\SL_2(\bZ)$ to be the kernel of the left action of $SL_{2}(\bZ)$ on the lattice corresponding to $A\cdot SL_{2}(\bZ)$. We have that $\cH_{1/1,k}$ is the union of modular curves 
\begin{equation*}
\coprod_{A\in \SL_2(\bZ)\backslash M_k/\SL_2(\bZ)} \bH/\Gamma_A
\end{equation*}
where the index set is over a choice of double coset representatives. If $A=\begin{bmatrix}a & b \\ c & d\end{bmatrix}\in M_k$, then the point $z\in \bH/\Gamma_A$ corresponds to the isogeny
\begin{equation*}
\bC/\langle 1,z\rangle \to \bC/\langle cz+d,az+b\rangle \to  \bC/\left\langle 1,\begin{bmatrix}a & b \\ c & d\end{bmatrix}z\right\rangle
\end{equation*} 
where the first map is multiplication by $k$, and the second is the isomorphism induced by multiplication by $\frac{1}{cz+d}$.

In particular, the source map $\phi:\cH_{1/1,k}\to\cM_{1,1}$ is induced by the inclusions $\Gamma_A\subset \Gamma$, so that $\phi(z)=z$, and the target map is defined by $\delta(z)=\begin{bmatrix}a & b \\ c & d\end{bmatrix}z$.

Now, we add marked points: recall from \S\ref{m11_intro} that
\begin{equation*}
\cM_{1,11}\subset (\bH\times\bC^{10})/(\SL_2(\bZ)\ltimes (\bZ^2)^{10}),
\end{equation*}
where the 10 copies of $\bC/\bZ^2$ correspond to the marked points, and the open subset is given by removing the diagonals and zero sections. In a similar way, we have
\begin{equation*}
\barH_{1/1,k,11}^\circ\subset \coprod_{A\in \SL_2(\bZ)\backslash M_k/\SL_2(\bZ)} (\bH\times\bC^{10})/(\Gamma_A\ltimes (\bZ^2)^{10}),
\end{equation*}
with source and target maps are given by 
\begin{align*}
\phi((z,\zeta_1,\ldots,\zeta_{10}))&=(z,\zeta_1,\ldots,\zeta_{10})\\
\delta((z,\zeta_1,\ldots,\zeta_{10}))&=\left(\begin{bmatrix}a & b \\ c & d\end{bmatrix}z,\frac{k\zeta_1}{cz+d},\ldots,\frac{k\zeta_{10}}{cz+d}\right)\\
\end{align*}

We now compute the action of $T_k$ on 
\begin{equation*}
\omega=\eta(z)^{24}dz\wedge d\zeta_1\wedge\cdots\wedge dz_{10}.
\end{equation*}
On $\bH/\Gamma_A$, we have
\begin{align*}
\delta^{*}\omega&=\eta\left(\frac{az+b}{cz+d}\right)^{24}d\left(\frac{az+b}{cz+d}\right)\wedge d\left(\frac{k\zeta_1}{cz+d}\right)\wedge\cdots\wedge d\left(\frac{k\zeta_{10}}{cz+d}\right)\\
&=\eta\left(\frac{az+b}{cz+d}\right)^{24}\left(\frac{k}{(cz+d)^2}dz\right)\wedge\left( \frac{k}{cz+d}d\zeta_1\right)\wedge\cdots\wedge\left( \frac{k}{cz+d}d\zeta_{10}\right)\\
&=k^{11}(cz+d)^{-12}\cdot\eta\left(\frac{az+b}{cz+d}\right)^{24}dz\wedge d\zeta_1\wedge\cdots\wedge dz_{10}.
\end{align*}

To compute the pushforward by $\phi$, recall that the pre-images of a point of $\barM_{1,1}$ are indexed by orbit representatives $A\in \SL_2(\bZ)\backslash M_k$; for each corresponding point of $\barH^{\circ}_{1/1,k}$, we may compute $\delta^{*}(\omega)$ at that point in terms of the chosen matrix $A$. Thus, summing over all pre-images amounts to summing the above formula for $\delta^{*}\omega$ over all choices of $A\in \SL_2(\bZ)\backslash M_k$, and we obtain
\begin{equation*}
\phi_{*}\delta^{*}\omega=k^{11}\sum_{A\in  \SL_2(\bZ)\backslash M_k}(cz+d)^{-12}\eta\left(\frac{az+b}{cz+d}\right)^{24}dz\wedge d\zeta_1\wedge\cdots\wedge dz_{10}.
\end{equation*}
This identifies $T_k$ with the $k$-th Hecke operator on the space of weight 12 cusp forms, which is 1-dimensional, and thus acts by the $k$-th Fourier coefficient of $\eta(q)^{24}$.
\end{proof}

\begin{proof}[Proof of Proposition \ref{elliptic_equal12}]
We wish to show that $\xi^{*}\barH_{g/1,d,(m_2)^2}$ fails to have TKD on $\barM_{1,11}\times\barM_{1,11}$. By Lemmas \ref{type_12_contribution} and \ref{type_3_contribution}, we need only consider the contributions as described in Lemma \ref{type_3_contribution}. Note, in this case, that the strata $\barH_{(\Gamma,\Gamma')}$ have the expected dimension.

Up to a constant factor (depending on $g$ and $d$ but not $(d_1,d'_1)$), the relevant contribution to $\xi^{*}\barH_{g/1,d,(m_2)^2}$ may be expressed as the pushforward of the fundamental class by the source map
\begin{equation*}
\phi:\coprod_{d_1+d'_1=11}\barH_{(1,1)/1,(d_1,d'_1),11}^{\circ}\to \barM_{1,11}\times \barM_{1,11},
\end{equation*}
where $\barH_{(1,1)/1,(d_1,d'_1),11}^{\circ}$ denotes the space of disconnected covers $X_1\coprod X'_1\to Y_1$, consisting of isogenies of degrees $d_1,d'_1$ and 11 pairs of points on $X_1,X'_1$ with equal image. Note, as in Lemma \ref{hecke_lemma}, that we do not label here the other $d-2$ points in each of these 11 special fibers.

We have a Cartesian diagram
\begin{equation*}
\xymatrix{
\barH_{(1,1)/1,(d_1,d'_1),11}^{\circ} \ar[r] \ar[d]^{\delta} & \barH_{1/1,d_1,11}^{\circ} \times \barH_{1/1,d'_1,11}^{\circ} \ar[d]^{(\delta,\delta)} \\
\barM_{1,11}\ar[r]^(0.4){\Delta}& \barM_{1,11}\times \barM_{1,11}
}
\end{equation*}
That is, $\barH_{(1,1)/1,(d_1,d'_1),11}^{\circ}$ parametrizes pairs of 11-pointed isogenies, with an isomorphism between the targets. In particular, we have
\begin{equation*}
\phi_{*}([\barH_{(1,1)/1,(d_1,d'_1),11}^{\circ}])=(\phi,\phi)_{*}(\delta,\delta)^{*}([\Delta]),
\end{equation*}
where the maps on the right hand side come from the correspondence
\begin{equation*}
\xymatrix{
\barH_{1/1,d_1,11}^{\circ} \times \barH_{1/1,d'_1,11}^{\circ} \ar[d]^{(\delta,\delta)} \ar[r]^(0.55){(\phi,\phi)} & \barM_{1,11}\times \barM_{1,11}  \\
\barM_{1,11}\times \barM_{1,11} &
}
\end{equation*}
arising as the product of correspondences from Lemma \ref{hecke_lemma}.

Finally, consider the K\"{u}nneth decomposition of the diagonal class $[\Delta]$. The terms consisting of pairs of even-dimensional classes have TKD both before and after applying the correspondence by Lemma \ref{even_algebraic}. By Lemma \ref{odd_cohomology}, the remaining terms are, up to a non-zero constant multiple,
\begin{equation*}
-\omega\otimes\overline{\omega}-\overline{\omega}\otimes\omega.
\end{equation*}

By Lemma \ref{hecke_lemma}, the correspondence acts by $\tau(d_1)\tau(d'_1)$ on this piece, and summing over all pairs $(d_1,d'_1)$, we find that the resulting class has non-zero odd contributions whenever the $d$-th coefficient $a_d$ of $\eta(q)^{48}$ is non-zero. In particular, it fails to have TKD, completing the proof.
\end{proof}

\begin{rem}
The modularity of the non-tautological contribution of the intersection of the $d$-elliptic cycle $\barH_{g/h,d,m_2}$ with the $\xi$ is consistent with the main conjecture of \cite{lian_qmod}, which predicts that the classes $\barH_{g/h,d,m_2}$ themselves are quasi-modular in $d$.
\end{rem}

\begin{cor}\label{elliptic_open}
Suppose that $d\ge2$ , $g\ge2$, $g+m_2=12$, and $a_d\neq0$. Then, $\cH_{g/1,d,(m_2)^2}\in H^{22}(\cM_{g,2m_2})$ is non-tautological.
\end{cor}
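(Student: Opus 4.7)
The plan is to deduce the corollary from Proposition \ref{elliptic_equal12} by arguing that pulling back via $\xi:\barM_{1,11}\times\barM_{1,11}\to\barM_{g,2m_2}$ detects non-tautologicality even modulo classes supported on the boundary. Suppose for contradiction that $\cH_{g/1,d,(m_2)^2}$ were tautological on $\cM_{g,2m_2}$; then by definition there exists $\alpha\in RH^{*}(\barM_{g,2m_2})$ with $j^{*}\alpha = \cH_{g/1,d,(m_2)^2}$, where $j$ is the open immersion. Setting $\beta := \alpha - \barH_{g/1,d,(m_2)^2}$ gives an algebraic class lying in $\ker(j^{*})$, i.e., (cohomologically) supported on $\partial\barM_{g,2m_2}$. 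By Proposition \ref{nontaut_criterion}, $\xi^{*}\alpha$ has TKD; if I can also show $\xi^{*}\beta$ has TKD, then $\xi^{*}\barH_{g/1,d,(m_2)^2}=\xi^{*}\alpha-\xi^{*}\beta$ would have TKD, contradicting Proposition \ref{elliptic_equal12}.

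The technical core is therefore to prove TKD for $\xi^{*}\beta$. I would decompose $\beta$ via the Gysin long exact sequence as a sum $\sum_i \iota_{Z_i,*}\gamma_i$ over boundary strata $Z_i\subset\barM_{g,2m_2}$ of codimension $c_i\ge 1$, with $\gamma_i\in H^{22-2c_i}(Z_i)$. For strata $Z_i$ not containing the image $\overline{\mathcal{M}_\Gamma}$ of $\xi$, the preimage $\xi^{-1}(Z_i)$ is a proper boundary substratum of $\barM_{1,11}\times\barM_{1,11}$, so $\xi^{*}\iota_{Z_i,*}\gamma_i$ is supported on $\partial(\barM_{1,11}\times\barM_{1,11})$. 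For strata $Z_i$ containing $\overline{\mathcal{M}_\Gamma}$, the map $\xi$ factors as $\xi=\iota_{Z_i}\circ\xi_{Z_i}$, and the self-intersection formula gives
$\xi^{*}\iota_{Z_i,*}\gamma_i = \xi_{Z_i}^{*}\gamma_i \cdot \xi_{Z_i}^{*} c_{c_i}(N_{Z_i})$.

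The key observation that makes this work is a degree count on $\barM_{1,11}\times\barM_{1,11}$: since $H^{\mathrm{odd}}(\barM_{1,11})$ is concentrated in degree $11$, the odd-odd Künneth contribution $H^{11}\otimes H^{11}$ only arises in degree $22$. Thus $\xi_{Z_i}^{*}\gamma_i\in H^{22-2c_i}$ with $c_i\ge 1$ decomposes entirely into even-even Künneth pieces, each tautological by Lemma \ref{even_algebraic}, and is hence TKD. The factor $c_{c_i}(N_{Z_i})$ is a polynomial in $\psi$ classes at the attaching nodes, so its pullback is tautological and the product of TKD classes is TKD. Finally, the boundary-supported contributions from the first case assemble into a class that is algebraic (being the difference between the algebraic $\xi^{*}\beta$ and the TKD sum just handled); Lemma \ref{boundary_taut} then yields TKD for this remainder as well.

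The main obstacle is that, although $\beta$ is boundary-supported on $\barM_{g,2m_2}$, the map $\xi$ factors through the boundary, so $\xi^{*}\beta$ is not a priori supported on $\partial(\barM_{1,11}\times\barM_{1,11})$; Lemma \ref{boundary_taut} cannot be invoked directly on $\xi^{*}\beta$. One must separate the strata containing $\overline{\mathcal{M}_\Gamma}$ and treat them via self-intersection, where the crucial input is the concentration of odd cohomology of $\barM_{1,11}$ in a single degree. This is precisely what prevents the normal-bundle twist from producing any unwanted $\omega\otimes\overline{\omega}$ contribution in intermediate degrees, and without this degree concentration the argument would fail.
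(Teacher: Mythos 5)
Your argument is correct and is essentially the paper's own proof, which simply defers to van Zelm's Theorem 2: one writes $\barH_{g/1,d,(m_2)^2}=\alpha+\beta$ with $\alpha$ tautological and $\beta$ supported on the boundary, and checks that $\xi^{*}\beta$ has TKD by exactly your two-case analysis --- contributions from strata met properly land in $\partial(\barM_{1,11}\times\barM_{1,11})$ and are handled by Lemma \ref{boundary_taut}, while excess contributions from strata containing $\im\xi$ live in degree $22-2c_i<22$ and so have no $H^{11}\otimes H^{11}$ component, the concentration of $H^{\mathrm{odd}}(\barM_{1,11})$ in degree $11$ being the crux in both write-ups. The one point to tighten: perform the decomposition $\beta=\sum_i\iota_{Z_i,*}\gamma_i$ in Chow via the excision sequence (rather than via the cohomological Gysin sequence), so that the $\gamma_i$ are genuinely algebraic and Lemma \ref{boundary_taut} applies to the first-case contributions; your a posteriori argument that the boundary-supported remainder is algebraic assumes the second-case sum is algebraic, which itself requires the $\gamma_i$ to be algebraic.
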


\begin{proof}
The proof is identical of \cite[Theorem 2]{vanzelm}: pullbacks of boundary cycles of (complex) codimension 11 have TKD on $\barM_{1,11}\times\barM_{1,11}$, so the failure of $\barH_{g/1,d,(m_2)^2}$ to have TKD upon this pullback persists after adding any combination of boundary cycles.
\end{proof}

\subsection{Induction on genus}\label{g_induction}

\begin{thm}\label{elliptic_general}
Suppose that $d\ge2$, $g\ge2$, $g+m_2\ge12$, and furthermore that $a_d\neq0$. Then, $\barH_{g/1,d,(m_2)^2}\in H^{*}(\barM_{g,2m_2})$ is non-tautological.
\end{thm}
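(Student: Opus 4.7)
The plan is to induct on $g$. For $2 \le g \le 12$ and $g + m_2 \ge 12$, Proposition \ref{elliptic_equal12} handles the base case $g + m_2 = 12$, and repeated application of Lemma \ref{add_pair} extends this to all $m_2 \ge 12 - g$. For $g \ge 13$, I assume by induction that $\barH_{g-1/1, d, (m_2)^2}$ is non-tautological (which is valid since $(g-1) + m_2 \ge 12$), so that by Lemma \ref{add_ram} the class $\barH_{g-1/1, d, (m_2)^2, 1}$ is also non-tautological.

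Following the approach of \cite{vanzelm}, I pull back $\barH_{g/1, d, (m_2)^2}$ to the elliptic-tail boundary divisor
\[
\xi:\barM_{g-1, 2m_2+1}\times\barM_{1,1}\to\barM_{g, 2m_2},
\]
attaching the elliptic component at the extra marked point indexed 1. By Proposition \ref{nontaut_criterion}, it suffices to show that $\xi^{*}\barH_{g/1, d, (m_2)^2}$ fails TKD on $\barM_{g-1, 2m_2+1}\times\barM_{1,1}$. Since all cohomology on $\barM_{1,1}$ is tautological (its cohomology lives in even degree, spanned by the fundamental class and $\psi_1$), this reduces to exhibiting a non-tautological contribution on the $\barM_{g-1, 2m_2+1}$-factor of the pullback.

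The analysis of $\xi^{*}\barH_{g/1, d, (m_2)^2}$ proceeds via Propositions \ref{hm_cartesian_sets} and \ref{harris_mumford_contributions}. By Lemma \ref{sep_node} and the genericity condition of Proposition \ref{hm_cartesian_sets}, any contributing admissible cover $f:X\to Y$ has target $Y = Y_0 \cup Y_1$ joined at a single node, with $Y_0 \cong \bP^1$ and $Y_1$ smooth elliptic; furthermore, admissibility at the distinguished node of $X$ (separating $X_{g-1}$ from $X_1$) forces the two branches to lie over opposite branches of the node of $Y$. This produces two cases: (A) $X_1 \to Y_1$ is a degree $d$ isogeny and $X_{g-1}$ (together with any auxiliary rational tails arising from further preimages of the node) covers $Y_0$; or (B) $X_{g-1} \to Y_1$ has degree $d$ and $X_1$ (with rational tails) covers $Y_0$. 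In Case A, $X_{g-1}$ maps to the rational component $Y_0$, so Lemma \ref{taut_rational_target} gives a tautological contribution on $\barM_{g-1, 2m_2+1}$, and the total has TKD. Case B is the main case: stratifying by the ramification index $e$ at the distinguished node, the Case B contribution on $\barM_{g-1, 2m_2+1}$ is a combination of Hurwitz-type cycles (with various ramification profiles at the distinguished marked point). The key observation is that the forgetful pushforward of this combination to $\barM_{g-1, 2m_2}$ (obtained by forgetting the extra marked point) is a strictly positive multiple of $\barH_{g-1/1, d, (m_2)^2}$, which is non-tautological by the inductive hypothesis; hence the Case B contribution on $\barM_{g-1, 2m_2+1}$ is itself non-tautological, giving the desired failure of TKD.

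The main obstacle is the bookkeeping in Case B, namely verifying that the multiplicities coming from the excess intersection formula in Proposition \ref{harris_mumford_contributions}(b), combined with the degrees of the forgetful maps on the source side, sum to a strictly positive total rather than cancelling across ramification profiles $e$. This can be established because all contributions to the excess intersection arise with positive multiplicity (products of ramification indices and local invariants) and the forgetful degrees (counting preimages of the distinguished target point on each $X_{g-1}$) are positive integers. Once this positivity is in hand, the failure of TKD of $\xi^{*}\barH_{g/1, d, (m_2)^2}$ on $\barM_{g-1, 2m_2+1}\times\barM_{1,1}$ follows, completing the inductive step.
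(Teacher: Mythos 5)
Your overall strategy is the one the paper uses: the same base case via Proposition \ref{elliptic_equal12} and Lemma \ref{add_pair}, the same elliptic-tail divisor $\xi:\barM_{g-1,2m_2+1}\times\barM_{1,1}\to\barM_{g,2m_2}$, and the same identification of the main stratum whose contribution is (a positive multiple of) $\barH_{(g-1)/1,d,(m_2)^2,1}$, concluded via Lemma \ref{add_ram} and induction. However, there is a genuine gap: your case division (A)/(B) is not exhaustive. Writing $d'$ for the degree with which the genus-$1$ subcurve $X_1$ covers the elliptic target component $Y_1$, your case (A) is $d'=d$ and your case (B) is $d'=0$, but the strata with $0<d'<d$ --- where $X_1$ contains an elliptic component mapping to $Y_1$ by a degree-$d'$ isogeny \emph{and} $X_{g-1}$ has degree $d-d'>0$ over $Y_1$ --- are nonempty, have the right dimension to contribute to the relevant K\"unneth component, and are covered by neither of your arguments. (Even for $d=2$ the case $d'=1$ occurs.) In particular Lemma \ref{taut_rational_target} cannot dispose of them, since $\Gamma'$ has a genus-$1$ vertex over which part of $X_{g-1}$ lives. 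The paper's proof handles all $d'\ge 1$ at once in Lemma \ref{d-ell_higher_genus_type1}: pairing against $[\barM_{g-1,2m_2+1}]\times[\Spec(\bC)]$ fixes the $j$-invariant of $X_1$, which (because $X_1\to Y_1$ is an isogeny) pins down $Y_1$ to finitely many isomorphism classes, so the resulting class on $\barM_{g-1,2m_2+1}$ is a boundary pushforward of a product of a fixed-target Hurwitz cycle and a rational-target Hurwitz cycle; TKD then follows from Lemma \ref{taut_fixed_target} together with Lemma \ref{taut_rational_target}. You never invoke Lemma \ref{taut_fixed_target}, and without it these mixed contributions are unaccounted for, so you cannot conclude that the total pullback fails TKD. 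Relatedly, even in your case (A) the hypothesis of Lemma \ref{taut_rational_target} (all vertices of $\Gamma'$ of genus $0$) is not satisfied as stated; the fixed-target argument is needed there too.

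A smaller point: your concern about possible cancellation across ramification profiles $e$ in case (B) does not arise. As in Lemma \ref{d-ell_higher_genus_type2}, a dimension count (the genus-$1$ tail over $Y_0\cong\bP^1$, totally ramified over the node, forces at least $e+1$ further branch points on $Y_0$) shows that only the configuration with $e=2$ contributes in the required dimension, so there is a single surviving stratum and the positivity you want is automatic from Proposition \ref{harris_mumford_contributions}(a).
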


We prove Theorem \ref{elliptic_general} by induction on $g$. When $2\le g\le 12-m_2$, the result follows by Proposition \ref{elliptic_equal12} and Lemma \ref{add_pair}.

Now, suppose $g>12$, so that in particular $(g-1)+m_2\ge12$. We consider the pullback of $\barH_{g/1,d,(m_2)^2}$ to the boundary divisor $\xi:\barM_{g-1,2m_2+1}\times\barM_{1,1}\to\barM_{g,2m_2}$. More precisely, let $b=2g-2+m_2$ be the dimension of $\barH_{g/1,d,m_2}$, also equal to the number of marked points on the target curve. Then, we consider the projection $\xi^{*}(\barH_{g/1,d,(m_2)^2})_{b-2,1}$ of $\xi^{*}(\barH_{g/1,d,(m_2)^2})$ to the factor
\begin{equation*}
H_{2(b-2)}(\barM_{g-1,2m_2+1})\otimes H_{2}(\barM_{1,1})\subset H_{2(b-1)}(\barM_{g-1,2m_2+1}\times\barM_{1,1}),
\end{equation*}
of the K\"{u}nneth decomposition. The factor $H_{2}(\barM_{1,1})$ is spanned by the fundamental class; we show by induction that the resulting class on $H_{2(b-2)}(\barM_{g-1,2m_2+1})$ is non-tautological, so that $\xi^{*}(\barH_{g/1,d,(m_2)^2})$  fails to have TKD.

Consider the usual diagram
\begin{equation*}
\xymatrix{
\coprod \barH_{(\Gamma,\Gamma')} \ar[r] \ar[d] & \barH_{g/1,d,(m_2)^2} \ar[d]^{\phi}\\
\coprod \barM_{A} \ar[r] \ar[d] & \barM_{g,N} \ar[d]^{\pi}\\
\barM_{g-1,2m_2+1}\times\barM_{1,1} \ar[r]^(0.63){\xi} & \barM_{g,2m_2}
}
\end{equation*}

Let $[f:X\to Y]$ be a general point of $\barH_{(\Gamma,\Gamma')}$. Because the graphs $A$ have only one edge, by the genericity condition from Proposition \ref{hm_cartesian_sets}, $Y$ may only have one node, which must be separating by Lemma \ref{sep_node}. Thus, $Y$ is the union of a smooth genus 1 component $Y_1$ and a smooth rational component $Y_0$. In addition, $\barH_{(\Gamma,\Gamma')}$ is pure of codimension 1 in $\barH_{g/1,d,(m_2)^2}$, so in particular the intersection in the upper square occurs in the expected dimension.

Let $X_1,X_{g-1}$ be the subcurves of $X$ of genus $1,g-1$, respectively, corresponding to the pieces parametrized by the factors of $\barM_{g-1,2m_2+1}\times\barM_{1,1}$.

We consider two cases:
\begin{enumerate}
\item At least one component of $X_1$ maps to $Y_1$, and
\item $X_1$ maps entirely to $Y_0$.
\end{enumerate}

\begin{lem}\label{d-ell_higher_genus_type1}
The contributions to $\xi^{*}(\barH_{g/1,d,(m_2)^2})_{b-2,1}$ from strata $\barH_{(\Gamma,\Gamma')}$ whose general point satisfies (i) have TKD.
\end{lem}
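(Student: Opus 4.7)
Since $\dim_{\bC}\barM_{1,1}=1$, the Künneth decomposition of the class $c := (\phi_{g-1},\phi_1)_*[\barH_{(\Gamma,\Gamma')}] \in H_{2(b-1)}(\barM_{g-1,2m_2+1}\times\barM_{1,1})$ has just two non-trivial terms, which we write as $c = c_0\otimes[\barM_{1,1}] + c_1\otimes[\text{pt}]$ with $c_0 \in H_{2(b-2)}(\barM_{g-1,2m_2+1})$. The $(b-2,1)$-piece is $c_0\otimes[\barM_{1,1}]$, and since $[\barM_{1,1}]$ is tautological, TKD of this piece is equivalent to $c_0$ being tautological. By the projection formula, $c_0 = (\phi_{g-1})_*\phi_1^*([\text{pt}])$.

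\textbf{Reduction via isogeny correspondence.} Let $\delta_1:\barH_{(\Gamma,\Gamma')}\to\barM_{1,1}$ be the map remembering $Y_1$ together with the attachment-node (obtained by composing the target map with the forgetful map that drops the branch points of $Y_0$, so that $Y_0$ is contracted, and also drops the branch points on $Y_1$). In case (i), the combined map $\Phi = (\phi_1,\delta_1):\barH_{(\Gamma,\Gamma')} \to \barM_{1,1}\times\barM_{1,1}$ has image contained in the degree-$d_1$ isogeny correspondence $C = \coprod_\alpha C_\alpha$, a compact stacky curve of dimension $1$ whose components $C_\alpha$ are indexed by double cosets in $\SL_2(\bZ)\backslash M_{d_1}/\SL_2(\bZ)$. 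On each $C_\alpha$ we have $H^2(C_\alpha)\cong\bQ$, so $\pi_1^*[\text{pt}]|_{C_\alpha}$ and $\pi_2^*[\text{pt}]|_{C_\alpha}$ are both scalar multiples of the generator, with ratio $\lambda_\alpha := \deg(\pi_1|_{C_\alpha})/\deg(\pi_2|_{C_\alpha})\in\bQ$. Pulling back by $\Phi$ to the preimage $\barH^{(\alpha)}:=\Phi^{-1}(C_\alpha)$, which is a union of connected components of $\barH_{(\Gamma,\Gamma')}$ and is itself a space of admissible covers with (possibly disconnected) source and prescribed ramification, we obtain $\phi_1^*[\text{pt}]|_{\barH^{(\alpha)}} = \lambda_\alpha\cdot\delta_1^*[\text{pt}]|_{\barH^{(\alpha)}}$, and therefore $c_0 = \sum_\alpha \lambda_\alpha\,(\phi_{g-1})_*\bigl(\delta_1^*[\text{pt}]|_{\barH^{(\alpha)}}\bigr)$.

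\textbf{Applying the fixed-target lemma.} It remains to show that each $(\phi_{g-1})_*\bigl(\delta_1^*[\text{pt}]|_{\barH^{(\alpha)}}\bigr)$ is tautological on $\barM_{g-1,2m_2+1}$. Apply Lemma \ref{taut_fixed_target} to $\barH^{(\alpha)}$, viewed as a Hurwitz space in its own right, with target map $\delta_1$ and a fixed point $[Y_1]\in\barM_{1,1}$. Using the fact that all points of $\barM_{1,1}$ are homologous, we deform $[Y_1]$ within the (compact) target moduli of $\barH^{(\alpha)}$ to a nodal rational boundary point of $\barM_{1,1}$; at such a point, the full target $Y = Y_0 \cup Y_1$ is a tree of rational curves, and Lemma \ref{taut_rational_target} gives that the pushforward of the corresponding fiber to the source moduli product $\prod_i\barM_{g_i,n_i}$ has TKD. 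Further pushforward to $\barM_{g-1,2m_2+1}$ through the natural boundary and forgetful maps preserves the tautological property, so each summand is tautological. Hence $c_0$ is tautological, and the contribution to the $(b-2,1)$ piece has TKD.

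\textbf{Main obstacle.} The crucial step is the identification $\phi_1^*[\text{pt}] \propto \delta_1^*[\text{pt}]$ on each component of $\barH_{(\Gamma,\Gamma')}$, which rests on recognizing the image of $(\phi_1,\delta_1)$ as (a union of modular components of) the isogeny correspondence and on the one-dimensionality of $H^2$ of each such component; once this is in hand, the combination of Lemma \ref{taut_fixed_target} with the homological deformation of $[Y_1]$ to a rational nodal degeneration reduces the problem to the rational-target case already handled by Lemma \ref{taut_rational_target}.
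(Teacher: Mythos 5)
Your proof is correct and follows essentially the same route as the paper: identify the $(b-2,1)$ K\"unneth piece with the class obtained by fixing the $j$-invariant of the elliptic tail $X_1$, convert this to fixing the target component $Y_1$, and conclude with Lemmas \ref{taut_fixed_target} and \ref{taut_rational_target}. The only difference is in phrasing one step: where you use $H^2(C_\alpha)\cong\bQ$ on the modular correspondence to replace $\phi_1^{*}[\mathrm{pt}]$ by a rational multiple of $\delta_1^{*}[\mathrm{pt}]$, the paper simply observes that fixing the isomorphism class of $X_1$ leaves a discrete set of choices for $Y_1$ --- both reduce the problem to the same fixed-target computation.
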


\begin{proof}
As its genus is 1, the subcurve $X_1$ can contain only one component over $Y_1$, an elliptic component mapping via an isogeny of degree $d'\le d$. One of the pre-images of the nodes of $Y$ is chosen as the separating node parametrizing $\barM_{g-1,2m_2+1}\times\barM_{1,1}$, and at the others, we must attach rational tails, in order for the genus of $X_1$ to be equal to 1. The curve $X_{g-1}$ then has degree $d-d'$ over $Y_1$ and $d-d'+1$ over $Y_0$.

Recall that we are interested in the contribution
\begin{equation*}
\xi^{*}(\barH_{g/1,d,(m_2)^2})_{b-2,1}\in H_{2(b-2)}(\barM_{g-1,2m_2+1})\otimes H_{2}(\barM_{1,1})\cong H_{2(b-2)}(\barM_{g-1,2m_2+1})
\end{equation*}
The resulting class in $H_{2(b-2)}(\barM_{g-1,2m_2+1})$ may be computed by intersecting $\xi^{*}(\barH_{g/1,d,(m_2)^2})_{b-2,1}$ with $[\barM_{g-1,2m_2+1}]\times[\Spec(\bC)]$, which amounts in this case to imposing the condition that the elliptic component $X_1$ have fixed $j$-invariant.

This, in turn, gives a discrete set of choices for the isomorphism class of the target component $Y_1$. For each possible $Y_1$, and each possible generic topological type of a $X\to Y$, we get a contribution to 
\begin{equation*}
\xi^{*}(\barH_{g/1,d,(m_2)^2})_{b-2,1}\in H_{2(b-2)}(\barM_{g-1,2m_2+1})
\end{equation*}
given by the product of a Hurwitz locus for the \textit{fixed} targets $Y_1$ and and a Hurwitz locus for the rational target $Y_0$, pushed forward by a boundary morphism. In particular, by Lemmas \ref{taut_rational_target} and \ref{taut_fixed_target}, all such contributions are tautological.
\end{proof}

\begin{lem}\label{d-ell_higher_genus_type2}
All strata $\barH_{(\Gamma,\Gamma')}$ whose general point satisfies (ii) and which give non-zero contributions to $\xi^{*}(\barH_{g/1,d,(m_2)^2})_{b-2,1}$ have general point $[f:X\to Y]$ of the following form, also depicted in Figure \ref{Fig:g_induction}.

$X_{g-1}$ consists of a smooth genus $g-1$ component mapping to $Y_1$ with degree $d$, along with $d-2$ rational tails; at a ramification point, a smooth genus 1 curve $X_1$ is attached, and $X_1$ maps to $Y_0$ with degree 2.
\end{lem}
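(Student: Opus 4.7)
The plan is to characterize the contributing strata by a dimension count dictated by the required K\"unneth bidegree $(b-2, 1)$ on $\barM_{g-1, 2m_2+1} \times \barM_{1, 1}$, and then use a short combinatorial/Riemann--Hurwitz analysis to pin down both source and target topological types.

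Since $H_2(\barM_{1,1})$ is spanned by the fundamental class and the boundary of $\barM_{1,1}$ is zero-dimensional, any non-zero contribution to the K\"unneth piece $H_{2(b-2)}(\barM_{g-1,2m_2+1}) \otimes H_2(\barM_{1,1})$ requires the image of $\barH_{(\Gamma,\Gamma')}$ in $\barM_{1,1}$ to be dense, forcing the stable model of the genus one piece $X_1$ to be smooth with varying $j$-invariant. By hypothesis (ii), $X_1$ maps to $Y_0$; let $d_1$ be the degree and $e$ the ramification index at the node $p$ connecting $X_1$ to $X_{g-1}$, which by Lemma \ref{sep_node} lies over the unique node of $Y$. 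If $n_0, n_1$ denote the numbers of marked points on $Y_0, Y_1$ away from the node, the target moduli split as $(n_1 + 1) + (n_0 - 2) = b - 1$; since the map from $\barH_{(\Gamma,\Gamma')}$ to $\barM_{g-1, 2m_2+1} \times \barM_{1,1}$ factors through the generically finite target moduli, the image bidegree is at most $(n_1 + 1,\, n_0 - 2)$, and requiring bidegree $(b-2, 1)$ forces equality $n_0 = 3$, $n_1 = b - 3$.

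The constraint $n_0 = 3$ is extremely restrictive. Riemann--Hurwitz for $X_1 \to Y_0$ produces $2d_1 - e + 1$ simple branches on the smooth locus of $Y_0$; combined with $n_0 = 3$ and $e \le d_1$, this forces $(d_1, e) = (2, 2)$ and $m_{2, 0} = 0$. The remaining degree $d - 2$ of $X \to Y_0$ must then contribute no smooth branches, so by a short case analysis (a genus zero component of degree $e'$ attached at $k$ nodes yields $e' + k - 2$ smooth branches, and $(e', k) = (1, 1)$ is the only solution consistent with $k \ge 1$ and $e' \ge k$), each additional $Y_0$-side component must be a degree-one rational tail attached at its own pre-image of the node; there are exactly $d - 2$ such.

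On the $Y_1$-side, the dual graph of $X_{g-1}$ is bipartite between $Y_1$-side and $Y_0$-side components with $d - 2$ edges, so connectedness forces a unique $Y_1$-side component $C$ covering $Y_1$ with degree $d$ and ramification profile $(2, 1, \ldots, 1)$ over the node, whence Riemann--Hurwitz yields $2g_C - 3$ smooth branches on $Y_1$ and $g_C = g - 1$ is forced by matching $n_1 - m_2 = 2g - 5$. The main residual bookkeeping, which I expect to be the principal obstacle, is to verify that $C$ is generically smooth (otherwise the image in $\barM_{g-1, 2m_2+1}$ lands in a deeper boundary stratum and fails to attain dimension $b - 2$) and that the $d - 2$ tails each carry only their attachment node — since $m_{2,0} = 0$ and they are unramified — so they contract in the stable model, leaving $C$ with exactly $2m_2 + 1$ marked points (the $2m_2$ preimages on $C$ of the $m_2$ marked fibers on $Y_1$, plus the node gluing to $X_1$), matching the factor $\barM_{g-1, 2m_2+1}$ as required.
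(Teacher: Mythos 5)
Your overall strategy --- a dimension count on how the $b$ branch/marked points of $Y$ distribute between $Y_0$ and $Y_1$, forcing exactly three onto $Y_0$, followed by Riemann--Hurwitz to pin down the topological type --- is the same as the paper's, and your Riemann--Hurwitz bookkeeping (the $d-2$ degree-one rational tails, the profile $(2,1,\ldots,1)$ over the node, and $g_C=g-1$) is correct. The issue you flag at the end is not the real obstacle: the component $C$ over $Y_1$ is automatically smooth at the general point of the stratum because admissible covers have nodes only over nodes of the target and $Y_1$ is smooth there, and the tails contract for exactly the reason you give. (One small looseness: in an admissible cover every preimage of the node of $Y$ on $X_1$ must itself be a node of $X$, so separatedness of the attaching node already forces $e=d_1$; your formula $2d_1-e+1$ implicitly treats the other preimages as smooth points, though your inequality excludes those cases anyway.)

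The genuine gap is in the step forcing $n_0=3$. You assert that the image of $\barH_{(\Gamma,\Gamma')}$ in the first factor $\barM_{g-1,2m_2+1}$ has dimension at most $n_1+1$ because the map ``factors through the target moduli.'' But the point of $\barM_{g-1,2m_2+1}$ is the stabilization of $X_{g-1}$, which contains components lying over $Y_0$ as well as over $Y_1$; those $Y_0$-side components are covers of $Y_0$ branched at whichever of the $n_0$ special points carry ramification or marked fibers of $X_{g-1}$, and they can a priori contribute up to $\max(0,b_{g-1,0}-2)$ extra dimensions to the first factor, where $b_{g-1,0}$ is the number of points of $Y_0$ associated to $X_{g-1}$. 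So the per-factor bound $(n_1+1,\,n_0-2)$ is not justified as stated, and the ``sum equals $b-1$, hence equality'' argument does not immediately close. The paper's proof plugs exactly this hole: it splits $b_{g-1}=b_{g-1,0}+b_{g-1,1}$ and checks that $b_{g-1,0}>0$ forces the image in $\barM_{g-1,2m_2+1}$ to have dimension strictly less than $b-2$, since those points contribute at most $b_{g-1,0}-2$ to the first factor while removing $b_{g-1,0}$ from $n_1$; only $b_{g-1,0}=0$ and $b_1=3$ survive. With that case analysis inserted, your argument coincides with the paper's.
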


\begin{figure}[!htb]
     \includegraphics[width=.45\linewidth]{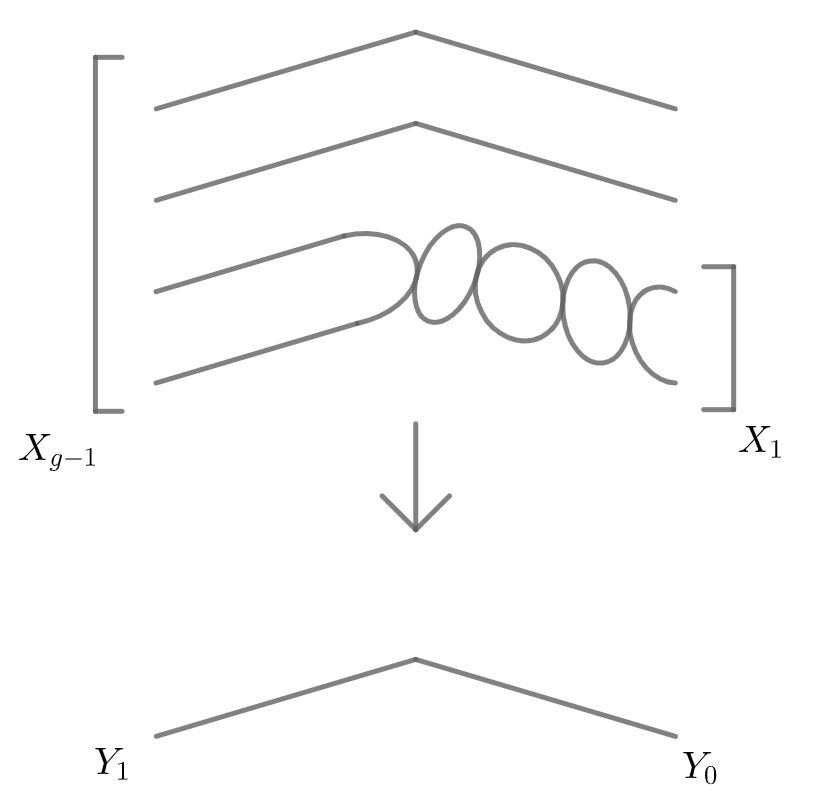}
          \caption{The only possible contribution of type (ii) to $\xi^{*}(\barH_{g/1,d,(m_2)^2})_{b-2,1}$. All marked points on the source lie on $X_{g-1}$.}\label{Fig:g_induction}
\end{figure}

\begin{proof}
If $X_1$ maps entirely to $Y_0$, then $X_1$ must be a smooth genus 1 curve. In order for the node at which $X_1,X_{g-1}$ meet to be separating, we need $X_1$ to be totally ramified over $Y_0$. 

All $2m_2$ of the (unforgotten) unramified marked points of $X$ are constrained to lie on $X_{g-1}$, so these points, as well as the $2g-2$ ramification points, can be associated in a well-defined way to one of $X_{1}$ and $X_{g-1}$. Let $b_1,b_{g-1}$, respectively, be the number of marked points appearing on these components, so that $b_1+b_{g-1}=b$. We have $b_1\ge3$, so $b_{g-1}\le b-3$.

On the other hand, let $b_{g-1,0},b_{g-1,1}$ be the number of marked points on $X_{g-1}$ mapping to $Y_0,Y_1$, respectively. Suppose that $b_{g-1,0}>0$. Then, a dimension count shows that the dimension of the image of $\barH_{(\Gamma,\Gamma')}$ upon projection to $\barM_{g-1,2m_{2}+1}$ is less than $b-2$. In particular, the contribution to $\xi^{*}(\barH_{g/1,d,(m_2)^2})_{b-2,1}$ is zero.

Thus, we find $b_{g-1,0}=0$, $b_{g-1,1}=d-3$, and $b_1=3$, from which we may conclude immediately.
\end{proof}

\begin{proof}[Proof of Theorem \ref{elliptic_general}]
By the previous two lemmas, all contributions to $\xi^{*}(\barH_{g/1,d,(m_2)^2})_{b-2,1}$ have TKD except possibly those coming from strata $\barH_{(\Gamma,\Gamma')}$ as described in Lemma \ref{d-ell_higher_genus_type2}, for which we get a positive multiple of $\barH_{(g-1)/1,(m_2)^2,1}$. By Lemma \ref{add_ram} and the inductive hypothesis, this class is non-tautological on $\barM_{g-1,2m_2+1}$, so $\xi^{*}(\barH_{g/1,d,(m_2)^2})_{b-2,1}$ fails to have TKD. In particular, $\barH_{g/1,d,(m_2)^2}$ is non-tautological.
\end{proof}

\section{Higher genus targets}\label{h_induction}

In this section, we complete the proof of Theorem \ref{main_thm}, by induction on $h$, with the base case given by Theorem \ref{elliptic_general}. As $d$ is fixed throughout, we will eventually require the same non-vanishing condition $a_d\neq0$.

\begin{prop}\label{higher_genus_target_main}
Suppose that $h\ge2$, $d\ge2$, $g\ge d$, $m_2\ge0$, $s\ge\max\{2,d-1\}$, and $m_d\ge s-1$. Suppose further that $\barH_{(g-d)/(h-1),d,(m_2)^2(m_d-s+2)^d}\in H^{*}(\barM_{g-d,2m_2+d(m_d-s+2)})$ is non-tautological (and in particular, that the cohomology group in question is non-zero and the Hurwitz locus is non-empty).

Then, $\barH_{g/h,d,(m_2)^2(m_d)^d}\in H^{*}(\barM_{g,2m_2+dm_d})$ is non-tautological.
\end{prop}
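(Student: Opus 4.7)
The plan is to carry out Step 3 of the proof outline from the introduction by pulling back $\barH_{g/h,d,(m_2)^2(m_d)^d}$ along a boundary morphism
\begin{equation*}
\xi \colon \barM_{g-d,\,N_0} \times \barM_{1,\,s-1}^{\,d} \lra \barM_{g,\,2m_2+dm_d},
\end{equation*}
with $N_0 = 2m_2 + d(m_d - s + 2) + d$, that glues $d$ elliptic tails onto a spine of genus $g-d$ at $d$ points lying in a common marked fiber, and then detecting failure of TKD via Proposition \ref{nontaut_criterion}. The partition of the marked points is forced: one of the $m_d$ marked $d$-tuples must specialize onto the $d$ attachment nodes, while $s-2$ of the remaining tuples distribute one point per elliptic tail, and the last $m_d - s + 2$ tuples remain as $d$-tuples on the spine. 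The hypotheses $s\ge\max\{2,d-1\}$ and $m_d\ge s-1$ ensure that this redistribution is combinatorially consistent and that the inductive Hurwitz locus is non-empty.

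Next, I would analyze the boundary strata $\barH_{(\Gamma,\Gamma')}$ contributing to $\xi^{*}\barH_{g/h,d,(m_2)^2(m_d)^d}$ following the template of Propositions \ref{hm_cartesian_sets} and \ref{harris_mumford_contributions}. By Lemma \ref{sep_node}, the target curve $Y$ is of the form $Y_{h-1} \cup Y_1$ with a single separating node $y_0$, with $Y_{h-1}$ and $Y_1$ smooth of genera $h-1$ and $1$ respectively. The source is correspondingly the union of a spine $X_{g-d}$ mapping to $Y_{h-1}$ with degree $d$ and $d$ elliptic components mapping to $Y_1$; the \emph{good} topological type has each elliptic tail covering $Y_1$ isomorphically and the $d$ attachment nodes on the spine lying above $y_0$. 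Because the ramification indices at all attachment nodes are $1$, the good $\barH_{(\Gamma,\Gamma')}$ is reduced of the expected dimension, and Proposition \ref{harris_mumford_contributions}(a) identifies its contribution as a nonzero rational multiple of the pushforward of its fundamental class.

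The bulk of the proof is the verification that contributions from other topological types have TKD upon projection to a suitable K\"{u}nneth factor of $H^{*}(\barM_{g-d,N_0}) \otimes H^{\mathrm{top}}(\barM_{1,s-1})^{\otimes d}$. Strata where some genus-$1$ source component degenerates onto a rational subtarget, or where either $Y_{h-1}$ or $Y_1$ acquires rational components, contribute TKD classes by Lemma \ref{taut_rational_target}; strata where some elliptic tail covers $Y_1$ with degree $\ge 2$ can be ruled out by a dimension count using quasi-finiteness of the target map $\delta$, so that their images land in a proper substack of the chosen K\"{u}nneth projection; and strata where the moduli of an elliptic tail are fixed by the projection fall under Lemma \ref{taut_fixed_target}. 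For the good type, the principal contribution pushes forward to a positive multiple of $\barH_{(g-d)/(h-1),d,(m_2)^2(m_d-s+2)^d}$ on $\barM_{g-d,\,2m_2+d(m_d-s+2)}$, which is non-tautological by the inductive hypothesis. The pullback thus fails to have TKD, and Proposition \ref{nontaut_criterion} concludes.

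The main obstacle will be the exhaustive case analysis, in particular ensuring that competing topological types all give contributions with TKD on the chosen K\"{u}nneth factor: this requires careful dimension counts on auxiliary Hurwitz spaces of mixed source--target configurations, and the numerical hypotheses $s \ge \max\{2,d-1\}$ and $m_d \ge s-1$ should be used precisely here, both to ensure that the good stratum realizes the correct topological constraints and to force competing types into lower-dimensional images.
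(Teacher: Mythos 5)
Your overall strategy --- degenerate to a comb of $d$ elliptic tails on a genus $g-d$ spine, force the attachment nodes into a common fiber using a marked $d$-tuple, and detect failure of TKD via Proposition \ref{nontaut_criterion} --- is the same as the paper's. But there is a genuine error at the decisive step. You assert that the good stratum $\barH_{(\Gamma,\Gamma')}$ meets $\barM_A$ in the expected dimension and invoke Proposition \ref{harris_mumford_contributions}(a). This is false: the target curve $Y$ has a single node (so the good stratum has codimension $1$ in the Hurwitz space), while $\barM_A$ has codimension $d$ (the $d$ attachment nodes of $X$), so the intersection is in excess dimension $d-1\ge 1$ and one must use the excess intersection formula of part (b). The resulting excess class is a degree $d-1$ product of $\psi$'s at the node half-edges, and it controls exactly which K\"unneth component the good contribution lands in. Concretely, the good contribution is supported on the image of $\barH_{\mathrm{spine}}\times\barM_{1,s}$ under $(\phi,\Delta)$, where $\Delta$ is the diagonal into the product of tail factors; after capping with the excess class, the component carrying the undecorated spine fundamental class has the tails in homological dimension $s-d+1$, namely $\Delta_{*}\bigl(\psi^{d-1}\cap[\barM_{1,s}]\bigr)$, not in $H_0$. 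Your proposed projection onto $H^{\mathrm{top}}$ of each tail factor therefore either kills the good contribution outright (when $s>d$) or forces $d-s$ of the excess $\psi$'s onto the spine side, leaving a class of the form $\phi_{*}(\psi^{d-s}\cap[\barH_{\mathrm{spine}}])$ whose non-tautologicality is \emph{not} what the inductive hypothesis provides. Note that the case actually used in the proof of Theorem \ref{main_thm} for $d>2$ is $s=d-1$, where this problem is unavoidable. The paper instead projects onto the bidegree $(B-s-1,\,s-d+1)$ component and must then separately verify $\psi^{d-1}\neq 0$ on $\barM_{1,s}$ (via the string equation), a non-vanishing check absent from your proposal.

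There is also a bookkeeping inconsistency in your setup: you put only $s-2$ of the $d$-tuples on the tails (so tails are $\barM_{1,s-1}$ after adding the node) and speak of a tuple ``specializing onto the attachment nodes,'' which is not meaningful for a boundary stratum (marked points cannot coincide with nodes), and your spine factor $\barM_{g-d,N_0}$ with $N_0=2m_2+d(m_d-s+2)+d$ does not match the space $\barM_{g-d,2m_2+d(m_d-s+2)}$ on which the inductive class lives. The correct accounting puts $s-1$ tuples on the tails (so each tail is $\barM_{1,s}$), whereupon the $d$ attachment half-edges on the spine themselves constitute one of the $m_d-s+2$ tuples of the inductive class, and the spine factor is exactly $\barM_{g-d,2m_2+d(m_d-s+2)}$. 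Your identification of the competing strata and the use of Lemmas \ref{taut_rational_target} and \ref{taut_fixed_target} to dispose of them is in the right spirit, but the core of the argument --- the excess intersection and the resulting choice of K\"unneth component --- needs to be redone.
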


Consider the codimension $d$ stratum
\begin{equation*}
\xi:\barM_{g-d,2m_2+d(m_d-s+2)}\times(\barM_{1,s})^{d}\to\barM_{g,2m_2+dm_d}
\end{equation*}
parametrizing ``comb'' curves, that is, curves formed by attaching $d$ elliptic tails to a ``spine'' of genus $g-d$. We require that  $s-1$ of the $d$-tuples of unramified points lie on the elliptic tails, with one point of each $d$-tuple distributed to each tail. The remaining $d$-tuples are constrained to lie on the spine, as are all $m_2$ pairs of unramified points.

We will prove Proposition \ref{higher_genus_target_main} by showing that $\xi^{*}\barH_{g/h,d,(m_2)^2(m_d)^d}$ fails to have TKD.

Let $b=(2g-2)-d(2h-2)+(m_2+m_d)$ be the number of marked points on the target of a cover parametrized by $\barH_{g/h,d,(m_2)^2(m_d)^d}$, and let $B=(3h-3)+b$ be the dimension of $\barH_{g/h,d,(m_2)^2(m_d)^d}$. We will consider the projection $\xi^{*}(\barH_{g/h,d,(m_2)^2(m_d)^d})_{B-s-1,s-d+1}$ of $\xi^{*}(\barH_{g/h,d,(m_2)^2(m_d)^d})$ to
\begin{equation*}
H_{2(B-s-1)}(\barM_{g-d,2m_2+d(m_d-s+1)})\otimes H_{2(s-d+1)}((\barM_{1,s})^{d}) \subset H_{2(B-d)}(\barM_{g-d,2m_2+d(m_d-s+1)}\times(\barM_{1,s})^{d})
\end{equation*}
Note, in particular, that the condition $s\ge d-1$ ensures that $H_{2(s-d+1)}((\barM_{1,s})^{d})$ is non-trivial.

As usual, consider the diagram

\begin{equation*}
\xymatrix{
\coprod \barH_{(\Gamma,\Gamma')} \ar[r] \ar[d] & \barH_{g/h,d,(m_2)^2(m_d)^d} \ar[d]^{\phi}\\
\coprod \barM_{A} \ar[r] \ar[d] & \barM_{g,r} \ar[d]^{\pi}\\
\barM_{g-d,2m_2+d(m_d-s+1)}\times(\barM_{1,s})^{d} \ar[r]^(0.63){\xi} & \barM_{g,2m_2+dm_d}
}
\end{equation*}

\begin{lem}\label{higher_genus_target_contributions}
The $\barH_{(\Gamma,\Gamma')}$ which give non-zero contributions to $\xi^{*}(\barH_{g/h,d,(m_2)^2(m_d)^d})_{B-s-1,s-d+1}$ have general point $[f:X\to Y]$ of the following form, also depicted in Figure \ref{Fig:h_induction}.

$Y$ consists of two smooth components $Y_1,Y_{h-1}$ of genus $1,h-1$, respectively. Over $Y_{h-1}$, $X$ contains a single smooth connected component $X_{g-d}$ of genus $g-d$, and over $Y_1$, $X$ contains $d$ elliptic components mapping isomorphically to $Y_1$ (and attached at unramified points to $X_{g-d}$).
\end{lem}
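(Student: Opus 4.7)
The plan is to mirror the dimension-count arguments used in Lemmas \ref{d-ell_higher_genus_type2} and \ref{type_3_contribution}, using the $m_d$-tuple conditions to pin down the topology of the contributing covers.

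First I would observe that the $d$ edges of an $A$-structure on $\Gamma$ correspond to separating nodes of $X$ (the attachment points of the $d$ elliptic tails to the spine), so by Lemma \ref{sep_node} their images in $Y$ are separating nodes. Next I would use the constraint that $s-1 \geq 1$ of the marked $d$-tuples have one point on each elliptic tail $E_1,\ldots,E_d$: for each such tuple, the common image in $Y$ is a smooth marked point (marked points cannot lie at nodes) lying on $f(E_j)$ for every $j$, so all $d$ tails map to a common irreducible component $Y_* \subset Y$. Since each irreducible $E_j$ contributes degree $\geq 1$ to the total degree $d$ of $f$ over $Y_*$, every tail maps with degree exactly $1$ (hence isomorphically onto $Y_*$), and no other component of $X$ maps to $Y_*$; the isomorphism then forces $Y_*$ to be smooth of genus $1$, so set $Y_1 := Y_*$.

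Now let $Y' = \overline{Y \setminus Y_1}$, let $k$ be the number of nodes of $Y$ between $Y_1$ and $Y'$, and let $k'$ be the number of remaining nodes (all internal to $Y'$). The stratum factors as $\barH_{(\Gamma,\Gamma')} = \barH_{Y_1} \times \barH_{Y'}$, with $\dim \barH_{Y_1} = s-1+k$ (since $Y_1$ varies in $\barM_{1,s-1+k}$, carrying its $s-1$ tuple-image points and $k$ attachment nodes, and the isomorphism covers add no moduli). Using $\dim \barH_{(\Gamma,\Gamma')} = 3h-3+b-k-k'$, one obtains $\dim \barH_{Y'} = 3h - 2 - 2k + b - s - k'$. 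The projection to the spine factor $\barM_{g-d,2m_2+d(m_d-s+2)}$ factors through $\barH_{Y'}$, so the image has dimension at most this quantity. For the contribution to the K\"unneth piece $H_{2(B-s-1)} \otimes H_{2(s-d+1)}$ to be nonzero, the spine image dimension must be at least $B - s - 1 = 3h - 4 + b - s$, which rearranges to $2k + k' \leq 2$. Since $Y$ is connected, $k \geq 1$, forcing $k=1$ and $k'=0$.

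From $k=1$ and $k'=0$ it follows, via the standard formula $p_a(Y) = \sum_v p_a(Y_v) + b_1(\text{dual graph})$ (and the fact that $Y$ being connected prevents $Y'$ from being disconnected with only one edge to $Y_1$), that $Y'$ is a single smooth connected component of genus $h-1$; hence $Y' = Y_{h-1}$. Any node internal to $X_{g-d}$ would have to map to a node internal to $Y_{h-1}$, of which there are none, so $X_{g-d}$ has no internal nodes, and the connectedness imposed by the chosen boundary stratum makes $X_{g-d}$ smooth connected of genus $g-d$ mapping with degree $d$ to $Y_{h-1}$. Admissibility at the unique node of $Y$, together with the fact that each $E_j \to Y_1$ is an isomorphism (hence unramified), forces the tails to be attached to $X_{g-d}$ at unramified points, completing the identification. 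The main bookkeeping hurdle is tracking dimensions of the two factors simultaneously; the decisive numerical input is the inequality $2k + k' \leq 2$, which collapses the target to exactly the two smooth components $Y_1$ and $Y_{h-1}$.
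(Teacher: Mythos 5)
Your overall strategy coincides with the paper's: use the marked $d$-tuples distributed over the tails to force all $d$ tails onto a common target component covered with degree $1$ by each, then run a dimension count to collapse the rest of $Y$ to a single smooth component. Your explicit inequality $2k+k'\le 2$ is a concrete version of the parameter count that the paper only sketches, and your endgame (identifying $Y_{h-1}$, smoothness and connectedness of $X_{g-d}$, unramifiedness at the attaching nodes) is fine. The gap is in the sentence ``the isomorphism then forces $Y_*$ to be smooth of genus $1$.'' You tacitly assume each $E_j$ is irreducible, but at a general point of a contributing stratum $\barH_{(\Gamma,\Gamma')}$ the arithmetic-genus-$1$ pieces need not be: the strata range over all $\Gamma$ admitting an $A$-structure, i.e.\ over arbitrary degenerations of the comb configuration, so $E_j$ may be reducible or irreducible nodal. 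Your degree argument then only shows that the single component $E_j^0\subset E_j$ containing $x_j$ maps isomorphically onto $Y_*$, and the isomorphism $E_j^0\cong Y_*$ says nothing about $g(Y_*)$ until you know $E_j^0=E_j$. This matters downstream: if $Y_*$ were rational, your factor $\barH_{Y_1}$ would have dimension $s+k-4$ rather than $s-1+k$, and the identical computation yields only $2k+k'\le 5$, which does not collapse the configuration (for instance, a rational $Y_*$ meeting a genus-$(h-1)$ component in two nodes, with the genus of each tail carried by components lying over the latter, has $2k+k'=4$ and survives your test).

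The paper closes exactly this gap before running the dimension count: since the $E_j^0$ are the only components over $Y_*$ and each maps isomorphically, every node of $Y_*$ has exactly one preimage on each $E_j^0$; the genericity condition of Proposition \ref{hm_cartesian_sets} forces at least one of these preimages over each such node to be an edge of $A$, and the elliptic vertices of the comb graph $A$ have valence $1$, which pins the $E_j^0$ down as entire tails and gives $g(Y_1)=g(X_1)=\cdots=g(X_d)=1$. You should insert such an argument (or extend your dimension count to cover the reducible-tail and rational-$Y_*$ cases) before invoking $\dim\barH_{Y_1}=s-1+k$; with that input, the rest of your proof goes through.
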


\begin{figure}[!htb]
     \includegraphics[width=.45\linewidth]{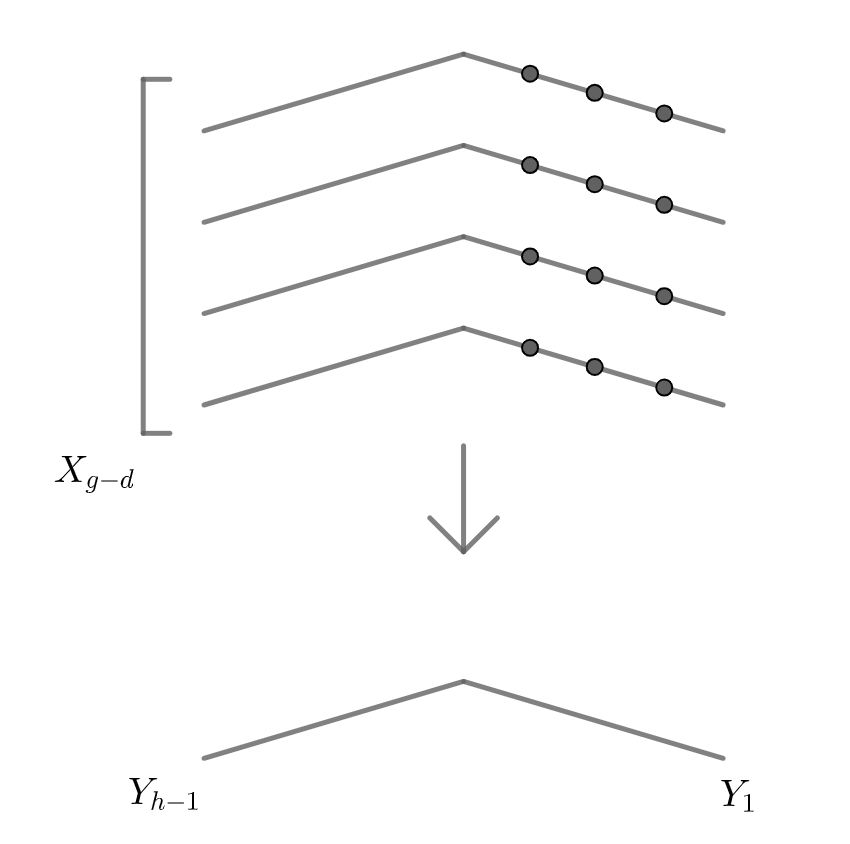}
          \caption{The only possible contribution to $\xi^{*}(\barH_{g/h,d,(m_2)^2(m_d)^d})_{B-s-1,s-d+1}$. Here, $s=4$, and all other marked points on the source lie on $X_{g-h}$.}\label{Fig:h_induction}
\end{figure}

\begin{proof}
Suppose $f:X\to Y$ is a general cover in a stratum $\barH_{(\Gamma,\Gamma')}$. Consider a marked point $y\in Y$ with marked $d$-tuple $x_1,\ldots,x_d$ of pre-images lying on the elliptic tails of $X$. Because $s\ge2$, at least one such marked fiber exists.

The points $x_1,\ldots,x_d$ must all lie on different components $X_1,\ldots,X_d$ of $X$, which must therefore map isomorphically to a component $Y_1\subset Y$. Note that all of these components must be tails, or else the valence of one of the elliptic vertices of $\Gamma$ would be greater than 1. Above the node of $Y_1$ corresponding to a half-edge of $\Gamma'$, at least one node must be chosen to correspond to a half-edge of $A$. In particular, $g(Y_1)=g(X_1)=\cdots=g(X_d)=1$. Furthermore, all $s-1$ of the marked points of $Y$ corresponding to marked $d$-tuples on elliptic tails must lie on $Y_1$, and the resulting $s$-marked elliptic curves $X_1,\ldots,X_d$ are all isomorphic.

Let $Y_{h-1}$ be the closure of $Y-Y_1$, over which the spine of $X$ lives. If the contribution of $\barH_{(\Gamma,\Gamma')}$ to $\xi^{*}(\barH_{g/h,d,(m_2)^2(m_d)^d})_{B-s-1,s-d+1}$ is non-zero, then the image of $\barH_{(\Gamma,\Gamma')}$ in $\barM_{g-d,2m_2+d(m_d-s+1)}$ must have dimension at least $B-s-1$. A parameter counting argument as we have carried out in the proofs of Lemmas \ref{add_ram}, \ref{type_3_contribution}, and \ref{d-ell_higher_genus_type2} shows that the $b-(s-1)$-pointed curve $Y_{g-1}$ must be smooth of genus $g-1$. In particular, the pre-image must be a smooth and connected curve of genus $g-d$, completing the proof.
\end{proof}

Lemma \ref{higher_genus_target_contributions} shows that the only non-zero contributions to $\xi^{*}(\barH_{g/h,d,(m_2)^2(m_d)^d})_{B-s-1,s-d+1}$ come from the diagram
\begin{equation*}
\xymatrix{
\barH_{(g-d)/(h-1),d,m_2+m_d-s+2}\times\barM_{1,s} \ar[r] \ar[d]^{(\phi,\Delta)} & \barH_{g/h,d,m_2+m_d} \ar[d]^{\phi}\\
\barM_{g-d,N-(s-2)d}\times(\barM_{1,s})^{d} \ar[r] \ar[d] & \barM_{g,N} \ar[d]\\
\barM_{g-d,2m_2+d(m_d-s+2)}\times(\barM_{1,s})^{d} \ar[r]^(0.63){\xi} & \barM_{g,2m_2+dm_d}
}
\end{equation*}

\begin{proof}[Proof of Proposition \ref{higher_genus_target_main}]
We apply the excess intersection formula in the top square; note that in the functorial fiber product, $\barH_{(g-d)/(h-1),d,m_2+m_d-s+2}\times\barM_{1,s} $ appears without non-reducedness, as the generic covers appearing in Lemma \ref{higher_genus_target_contributions} are unramified at the nodes. Recall from \S\ref{hm_boundary_int} that we need to pass to the normalization of $\barH_{(g-d)/(h-1),d,m_2+m_d-s+2}$. The dimensions of $\barH_{(g-d)/(h-1),d,m_2+m_d-s+2}$ and $\barM_{1,s}$ are $B-s-1,s$, respectively, and we are looking for the contribution in homological dimension $(B-s-1,s-d+1)$ on $\barM_{g-d,2m_2+d(m_d-s+1)}\times(\barM_{1,s})^{d}$. On the other hand, the intersection in the top square occurs in dimension $d-1$ greater than the expected.

Therefore, after applying the excess intersection formula, the piece of resulting the class on $\barM_{g-d,2m_2+d(m_d-s+1)}\times(\barM_{1,s})^{d}$ appearing in the desired pair of dimensions is a non-zero multiple of the pushforward of
\begin{equation*}
\barH_{(g-d)/(h-1),d,(m_2)^2(m_d-s)^d}\times \psi^{s-d+1},
\end{equation*}
where the $\psi$ class on $\barM_{1,s}$ is taken at the marked point to which the spine of $X$ is attached.

Therefore, $\xi^{*}(\barH_{g/h,d,(m_2)^2(m_d)^d})$ fails to have TKD, provided that $\psi^{s-d+1}\neq0$. However, note that, by the string equation,
\begin{equation*}
\int_{\barM_{1,s}}\psi_i^{s}=\int_{\barM_{1,1}}\psi\neq0,
\end{equation*}
where we have pushed forward by the map forgetting all but the $i$-th marked point. In particular, all smaller powers of $\psi$ are also non-zero.
\end{proof}

\begin{proof}[Proof of Theorem \ref{main_thm}]
The first claim, for $d$-elliptic loci, is Theorem \ref{elliptic_general}.

Now, suppose $h>1$ and $d=2$. Note in this case that $m_2=m_d\ge1$ by assumption. We prove the desired claim by induction on $h$ by applying Proposition \ref{higher_genus_target_main} with $s=2$. For $h=1$, we already have the same bounds for $h=1$, though there the condition $m_2\ge1$ is superfluous. Now, we have $g\ge2h$ and $g+m_2\ge 2h+10$, so $(g-2)\ge 2(h-1)$ and $(g-2)+m_2\ge 2(h-1)+10$, and we may apply the inductive hypothesis.

Finally, suppose $h>1$ and $d>2$; again, when $h=1$, we have stronger bounds after applying Lemma \ref{d_to_2}, so we may use this as the base case for induction on $h$, applying Proposition \ref{higher_genus_target_main} with $s=d-1$. The conditions $g\ge d$ and $m_d\ge s-1=d-2$ are easily checked to be satisfied given the hypothesis of the theorem, and the needed inequalities are still satisfied when $(g,h,d,m_2,m_d)$ are replaced by $(g-d,h-1,d,m_2,m_d-d+3)$, so the proof is complete.
\end{proof}

%
%
%
%


\begin{thebibliography}{AMSa}

\bibitem[ACV03]{acv} Dan Abramovich, Alessio Corti, Angelo Vistoli, \emph{Twisted bundles and admissible covers}, Commun. Algebra \textbf{8} (2003), 3547-3618

\bibitem[DvHZ13]{dhz} Maarten Derickx, Mark van Hoeij, Jinxiang Zeng, \emph{Computing Galois representations and equations for modular curves $X_H(\ell)$}, arXiv 1312.6819

\bibitem[FP05]{fp} Carel Faber and Rahul Pandharipande, \emph{Relative maps and tautological classes}, J. Eur. Math. Soc. \textbf{7} (2005), 13-49

\bibitem[FP13]{handbook} Carel Faber and Rahul Pandharipande, \emph{Tautological and non-tautological cohomology
of the moduli space of curves}, in ``Handbook of Moduli'', Vol. II, G. Farkas and I. Morrison, eds., Adv. Lect. Math. (ALM) \textbf{74}, International Press, Boston (2013), 293-330

\bibitem[Get98]{getzler} Ezra Getzler, \emph{The semi-classical approximation for modular operads}, Comm. Math. Phys. \textbf{194} (1998), 481-492

\bibitem[GP03]{gp} Thomas Graber and Rahul Pandharipande, \emph{Constructions of nontautological classes on moduli spaces of curves}, Michigan Math. J. \textbf{51} (2003), 93-109

\bibitem[HM82]{hm} Joe Harris and David Mumford, \emph{On the Kodaira dimension of the moduli space of curves}. Invent. Math. \textbf{67} (1982), 23-86

\bibitem[Kee92]{keel} Sean Keel, \emph{Intersection theory of moduli space of stable $N$-pointed curves of genus zero}. Trans. Amer. Math. Soc. \textbf{330} (1992), 545-574.

\bibitem[Leh47]{lehmer} Derrick H. Lehmer \emph{The vanishing of Ramanujan's function $\tau(n)$}. Duke Math. J. \textbf{14} (1947), 429–433

\bibitem[L20a]{lian_qmod} Carl Lian, \emph{$d$-elliptic loci in genus 2 and 3}, Int. Math. Res. Not. IMRN, to appear.

\bibitem[L20b]{lian_htaut} Carl Lian, \emph{The $\cH$-tautological ring}, arXiv 2011.11565.

\bibitem[Mum83]{mumford} David Mumford, \emph{Towards an enumerative geometry of the moduli spaces of curves}, Arith. Geom. \textbf{2} (1983), 271-328

\bibitem[PPZ15]{ppz} Rahul Pandharipande, Aaron Pixton, and Dmitri Zvonkine, \emph{Relations on $\barM_{g,n}$ via 3-spin structures}, J. Am. Math. Soc. \textbf{28} (2015), 279-309

\bibitem[Pet14]{petersen} Dan Petersen, \emph{The structure of the tautological ring in genus one}, Duke Math. J. \textbf{163} (2014), 777-793


\bibitem[SvZ18]{svz} Johannes Schmitt and Jason van Zelm, \emph{Intersection of loci of admissible covers with tautological classes}, Selecta Math. (N.S.) \textbf{26}, Article Nr. 79 (2020)

\bibitem[vZ18]{vanzelm} Jason van Zelm, \emph{Nontautological bielliptic cycles}, Pacific J. Math. \textbf{294} (2018), 495-504


\end{thebibliography}
\end{document}